\documentclass[12pt,a4paper, oneside]{amsart}

\usepackage{amsmath, nicefrac, amsthm, verbatim, amsfonts, mathtools, amssymb, upgreek, xcolor, bbm}
\usepackage{graphics, xspace, enumerate}
\usepackage{stix}
\usepackage{dsfont}
\usepackage[a4paper,margin=2.5cm]{geometry}
\usepackage[bb=boondox]{mathalfa}

\usepackage{graphicx}
\usepackage[colorlinks=true,citecolor=red,urlcolor=blue,linkcolor=red,bookmarksopen=true,unicode=true,pdffitwindow=true]{hyperref}
\usepackage[english]{babel}
\usepackage[languagenames,fixlanguage]{babelbib}
\hypersetup{pdfauthor={}}
\hypersetup{pdftitle={}}

\usepackage{seqsplit,cleveref}

\hyphenation{Austau-schdienst}

\theoremstyle{plain}
\newtheorem{theorem}{Theorem}[section]

\newtheorem{lemma}[theorem]{Lemma}
\newtheorem{proposition}[theorem]{Proposition}
\theoremstyle{definition}
\newtheorem{remark}[theorem]{Remark}

\newcommand {\Prob} {\ensuremath{\mathbb{P}}}
\newcommand {\R} {\ensuremath{\mathbb{R}}}
\newcommand {\ZZ} {\ensuremath{\mathbb{Z}}}
\newcommand {\N} {\ensuremath{\mathbb{N}}}

\newcommand{\process}[1]{\{#1(x,i;t)\}_{t\geq0}}

\newcommand{\df}{\coloneqq}

\newcommand{\E}{\mathrm{e}}
\newcommand{\Id}{\mathbb{I}}

\newcommand{\B}{\mathsf{B}}
\newcommand{\bb}{\mathrm{b}}
\newcommand{\cc}{\mathrm{c}}
\newcommand{\PP}{\mathcal{P}}
\newcommand{\p}{\mathsf{p}}

\newcommand{\X}{\mathrm{X}}
\newcommand{\A}{\mathcal{A}}

\newcommand{\D}{\mathrm{d}}
\newcommand{\DD}{\mathrm{D}}

\numberwithin{equation}{section}


\title[Periodic homogenization of a class of weakly coupled systems of linear PDE\MakeLowercase{s} ]{Periodic homogenization of a class of   weakly coupled systems of linear PDE\MakeLowercase{s}}

\author[N.\ Sandri\'{c}]{Nikola Sandri\'{c}}
\address[Nikola\ Sandri\'{c}]{Department of Mathematics\\University of Zagreb\\ Zagreb\\Croatia}
\email{nsandric@math.hr}

\subjclass[2010]{35B27,	  35J57, 35K45, 60F17, 60J25, 60J60}
\keywords{Feynman-Kac formula, periodic homogenization, regime switching diffusion process,  second-order elliptic systems, second-order parabolic systems}

\begin{document}
\allowdisplaybreaks[4]

\begin{abstract} In this article, basing upon probabilistic methods, we discuss periodic homogenization of a class of   weakly coupled systems of linear   elliptic and parabolic partial differential equations. 
	Under the assumption that the systems have rapidly periodically oscillating coefficients,  we first prove that the appropriately centered  and scaled continuous component of the associated regime switching diffusion process converges weakly to a Brownian motion with covariance matrix given in terms of the coefficients of the systems. The homogenization results  then follow
by employing probabilistic representation of the solutions to the systems and the continuous mapping theorem.
	The presented results generalize the  well-known results related to periodic homogenization of the classical elliptic boundary-value problem and the
classical parabolic initial-value problem for a single equation.
	
\end{abstract}

\maketitle

	\section*{Acknowledgements}
Financial support through the \textit{Alexander-von-Humboldt Foundation} (under project No. HRV 1151902 HFSTE) and \textit{Croatian Science Foundation} (under project No. 8958) 
is  gratefully acknowledged. 

\section{Introduction}\label{S1} 

One of the  classical directions in the analysis of partial differential equations (PDEs) centers around their homogenization (averaging) properties. In this article,  we discuss periodic homogenization of a class of   weakly coupled systems of linear   elliptic and parabolic PDEs.  
	Let $\{\mathcal{L}_i^{\varepsilon}\}_{i\in[n]}$,  $\varepsilon>0$, be a family of second-order elliptic differential operators  of the form
$$
\mathcal{L}^{\varepsilon}_i\,=\,
2^{-1}\mathrm{Tr}\bigl(\upsigma(\cdot/\varepsilon,i)\upsigma(\cdot/\varepsilon,i)^\mathrm{T}\,\nabla\nabla^\mathrm{T}\bigr)+
\bigl(\varepsilon^{-1}\bb(\cdot/\varepsilon,i)^{\mathrm{T}}+\cc(\cdot/\varepsilon,i)^{\mathrm{T}}\bigr)\nabla\,,
$$ and let $\mathcal{Q}=(\mathrm{q}_{ij})_{i,j\in[n]}$ be an $n\times n$ real matrix with $\mathrm{q}_{ij}\ge0$ for $i\neq j$. If $n=1$, we put $\mathcal{Q}\df 0$. For $f:[n]\to\R$ and $i\in[n]$, define $\mathcal{Q}f(i)\df\sum_{j\in[n]}\mathrm{q}_{ij}f(j).$
The main goal of this article is to discuss periodic homogenization (that is, asymptotic behavior of the solution as $\varepsilon\to0$) for the following weakly coupled  system (the system is coupled in the terms which are not differentiated) of elliptic boundary-value problems \begin{equation}\label{ES1.2}
\begin{aligned}
\bigl(\mathcal{L}_i^{\varepsilon}+\varepsilon^{-2}\mathcal{Q}\bigr) u^\varepsilon(x,i)+
e(x/\varepsilon,i)\,u^{\varepsilon}(x,i)+f(x)&\,=\,0\,,\qquad (x,i)\in\mathscr{D}\times[n]\,,\\
u^\varepsilon(x,i)&\,=\, g(x)\,,\qquad (x,i)\in\partial\mathscr{D}\times[n]\,,
\end{aligned}
\end{equation}
as well as the  coupled system of parabolic initial-value problems
\begin{equation}\label{ES1.3} 
\begin{aligned}
\partial_tu^\varepsilon(t,x,i)\,=\,&\bigl(\mathcal{L}_i^{\varepsilon}+\varepsilon^{-2}\mathcal{Q}\bigr) u^\varepsilon(t,x,i)\\ &+e(x/\varepsilon,i)u^{\varepsilon}(t,x,i)+f(x)\,,\qquad (t,x,i)\in(0,\infty)\times\R^d\times[n]\,,\\
u^\varepsilon(0,x,i)\,=\,& g(x)\,,\qquad (x,i)\in\R^d\times[n]\,.
\end{aligned}
\end{equation}
Without loss of generality, in the sequel we assume $\mathrm{q}_{ii}=-\sum_{j\neq i}\mathrm{q}_{ij}$; otherwise we replace $e(x/\varepsilon,i)$ in \cref{ES1.2,ES1.3} by $e(x/\varepsilon,i)+\varepsilon^{-2}\sum_{j\in[n]}\mathrm{q}_{ij}$. 
These problems are a generalization of  the classical elliptic boundary-value problem and the classical parabolic initial-value problem for a single equation ($n=1$). Typical examples of such systems are the Maxwell's equations, Hamilton--Jacobi equations and semiconductor equations, see \cite{Azunre-2017} and \cite{Engler-Lenhart-1991} and the references therein.

Our approach   relies on probabilistic techniques.  By combining classical PDE results (existence of a smooth solution to the corresponding Poisson equation) and the fact that the underlying regime switching diffusion process $\{(\X^\varepsilon(x,i;t),\Lambda(i;t/\varepsilon^2))\}_{t\ge0}$ associated to  $\mathcal{L}_i^{\varepsilon}+\varepsilon^{-2}\mathcal{Q}$  does not show a singular behavior in its motion (that is, it is irreducible),  
 we first show that  the (appropriately centered) process $\{\X^\varepsilon(x,i;t)\}_{t\ge0}$ satisfies a functional CLT with Brownian limit as $\varepsilon\to0$ (see \Cref{T3.1}). 
 The homogenization results (see \Cref{T4.1,T4.2}) then follow 
  by employing probabilistic representation (the Feynman-Kac formula) of the  solutions to the problems in \cref{ES1.2,ES1.3}, obtained in  \cite[Theorems 3.2 and 3.3]{Zhu-Yin-Baran-2015}, and the continuous mapping theorem.  This idea goes back to M. I. Fre\u{\i}dlin  \cite{Freidlin-1964} (see also \cite[Chapter 3]{Bensoussan-Lions-Papanicolaou-Book-1978}).

\subsection{Literature review}  
Our work contributes to the classical theory of periodic homogenization. Most of the existing literature on this subject focuses on  homogenization of   problems with a single equation; for instance, see the classical monographs \cite{Allaire-2002-Book}, \cite{Bensoussan-Lions-Papanicolaou-Book-1978},   \cite{Jikov-Kozlov-Oleinik-1994-Book} and  \cite{Tartar-2009-Book}. In this article, we discuss homogenization of weakly coupled systems of linear PDEs, that is, systems with no coupling in the derivatives. 
In a certain sense, these systems form a most general class of systems satisfying the maximum principle (see \cite[Chap.~3 Sec.~8]{Protter-Weinberger-Book-1984}), and as such have drawn attention of many authors. Also, there is a direct connection between   these systems and 
probability theory. Namely, the operator $\mathcal{L}_i^\varepsilon+\varepsilon^{-2}\mathcal{Q}$ is the infinitesimal generator of a
diffusion process with random switching mechanism (see \cite{Skorokhod-Book-1989}). This makes them  interesting both to analysts and probabilists.
Solutions to the  problems in \cref{ES1.2,ES1.3} were studied in \cite{Chen-Zhao-1994}, \cite{Chen-Zhao-1996}, \cite{Eizenberg-Freidlin-1990}, \cite{Eidelman-Book-1969} and \cite{Friedman-Book-1964}. Their Schauder regularity can be found in 
\cite{Addona-Lorenzi-2023} and \cite{Delmonte-Lorenzi-2011}, and
their Feynman-Kac representation is established in  \cite{Zhu-Yin-Baran-2015} and \cite{Freidlin-Book-1985}.
Potential theory for these systems was developed in \cite{Chen-Zhao-1996} while strong positivity
results can be found in \cite{Chen-Zhao-1996} and \cite{Sweers-1992}. Maximum principle is established in \cite{Boyadzhiev-Kutev-2018} and \cite{Sirakov-2009},  results on  Harnack inequality  in \cite{Arapostathis-Ghosh-Marcus-1999} and \cite{Chen-Zhao-1997}, and existence and characterization of all the systems of invariant measures of the associated semigroups are obtained in  \cite{Addona-Angiuli-Lorenzi-2019}. Homogenization of the problems of type in \cref{ES1.2,ES1.3} was discussed in \cite{Eizenberg-Freidlin-1990},  \cite{Eizenberg-Freidlin-1993-PTRF}, \cite{Eizenberg-Freidlin-1993-AOP}  and \cite{Pinsky-Pinsky-1993}. However, observe that the  systems considered  in these works are not periodic. 
Let us also mention that various
probabilistic phenomena associated with this kind of systems have recently been 
investigated, for instance, see the  classical references in the field \cite{Mao-Yuan-Book-2006} and \cite{Yin-Zhu-Book-2010}.

\subsection{Notation} 
We summarize some  notation used throughout the article. For $n\in\N$, the symbol $[n]$ stands for the set $\{1,\dots,n\}$. 
We use $\R^d$, $d\in\N$, to denote real-valued
$d$-dimensional  vectors, and write $\R$ for $d=1$. All  vectors will be column vectors. The Euclidean norm on $\R^d$ is denoted by $\lvert\cdot\rvert$.
By $M^{\mathrm{T}}$ and $\|M\|_{HS}\df (\mathrm{Tr}MM^{\mathrm{T}})^{1/2}$ we denote the transpose and the Hilbert-Schmidt norm of  a  matrix $M$, respectively. For a 
square matrix $M$,
$\mathrm{Tr}\, M$ stands for its trace. The $d\times d$ identity matrix is denoted by $\mathbb{I}_d$. For a set $A\subseteq\R^d$,  the symbols
$\mathbb{1}_{A}$, $\overline{A}$ and $\partial A$  stand for
the indicator function,  (topological) closure and (topological) boundary of $A$, respectively. The open ball of radius $\rho>0$ around $x\in\R^d$ is denoted by $\mathscr{B}_\rho(x)$.
For topological spaces $(T,\mathcal{T})$ and $(S,\mathcal{S})$ we let  $\mathfrak{B}(T)$ and $\mathcal{B}(T,S)$ denote the Borel $\sigma$-algebra on $(T,\mathcal{T})$ and the space of $\mathfrak{B}(T)/\mathfrak{B}(S)$-measurable functions, respectively. Also, for $A\subseteq T$, $\mathfrak{B}(A)$ stands for $\{A\cap B\colon B\in\mathfrak{B}(T)\}$.
For $f\in\mathcal{B}(\R^d,\R^p)$ we let $\|f\|_\infty\df\sup_{x\in\R^d}|f(x)|$ denote its supremum norm, and 
$\mathcal{B}_b(\R^d,\R^p)$ stands for $\{f\in\mathcal{B}(\R^d,\R^p)\colon \|f\|_\infty<\infty\}$. We use  $\mathcal{C}_b^k(\R^d,\R^p)$,  $\mathcal{C}_{u,b}^k(\R^d,\R^p)$ and $\mathcal{C}^k_\infty(\R^d,\R^p)$, $k\in\N_0\cup\{\infty\}$, to denote the subspace of $\mathcal{B}_b(\R^d,\R^p)\cap \mathcal{C}^k(\R^d,\R^p)$ of all  $k$ times differentiable functions such that all derivatives up to order $k$  are bounded,  uniformly continuous and bounded, and vanish at infinity, respectively.
 By $\mathcal{B}(\R^d\times[n],\R^p)$, $\mathcal{B}_b(\R^d\times[n],\R^p)$,  $\mathcal{C}^k(\R^d\times[n],\R^p)$, $\mathcal{C}_b^k(\R^d\times[n],\R^p)$, $\mathcal{C}_{u,b}^k(\R^d\times[n],\R^p)$ and $\mathcal{C}^k_\infty(\R^d\times[n],\R^p)$, $k\in\N_0\cup\{\infty\}$, we denote the spaces of functions $f:\R^d\times[n]\to\R^p$ such that $f(\cdot,i)\in \mathcal{B}(\R^d,\R^p),$ $f(\cdot,i)\in \mathcal{B}_b(\R^d,\R^p),$  $f(\cdot,i)\in \mathcal{C}^k(\R^d,\R^p)$, $f(\cdot,i)\in \mathcal{C}_b^k(\R^d,\R^p)$, $f(\cdot,i)\in \mathcal{C}_{u,b}^k(\R^d,\R^p)$ and $f(\cdot,i)\in \mathcal{C}_\infty(\R^d,\R^p)$,  respectively, for every $i\in[n]$.
Gradient of $f\in\mathcal{C}^1(\R^d\times[n],\R)$ is denoted by $\nabla f(x,i)=(\partial_1 f(x,i),\dots,\partial_d f(x,i))^\mathrm{T}$, and for $f=(f_1,\dots,f_p)^\mathrm{T}\in\mathcal{C}^1(\R^d\times[n],\R^p)$ we write $\mathrm{D} f(x,i)=(\nabla f_1(x,i),\dots,\nabla f_p(x,i))^\mathrm{T}$  for the corresponding Jacobian. 
For a Borel  measure $\upmu$
on $\mathfrak{B}(\R^d\times[n])$ and $f=(f_1,\dots,f_p)^{\mathrm{T}}\in\mathcal{B}(\R^d\times[n],\R^p)$, we often use the convenient notation
$\upmu(f)=\int_{\R^d\times[n]} f(x,i)\,\upmu(\D{x}\times\{i\})\df (\int_{\R^d\times[n]} f_1(x,i)\,\upmu(\D{x}\times\{i\}),\dots,\int_{\R^d\times[n]} f_p(x)\,\upmu(\D{x}\times\{i\}))^\mathrm{T}$.
For $\tau=(\tau_1,\dots, \tau_{d})^{\mathrm{T}}\in (0,\infty)^{d}$, we let $\ZZ^{d}_\tau\df\{(\tau_1 k_1,\dotsc,\tau_{d} k_d)^\mathrm{T}\colon (k_1,\dotsc,k_d)^\mathrm{T}\in\ZZ^d\},$
and, for  $x\in\R^{d}$, 
$$[x]_\tau\,\df\,\bigl\{y\in\R^{d}\colon x-y\in\ZZ_\tau^{d}\bigr\}\,,\qquad\textrm{and}\qquad
\mathbb{T}^{d}_\tau\times[n]\,\df\,\bigl\{([x]_\tau,i)\colon (x,i)\in\R^{d}\times[n]\bigr\}\,.$$
Clearly,
$\mathbb{T}^{d}_\tau\times[n]$ is obtained
by identifying the opposite
faces of $[0,\tau]\times[n]\df[0,\tau_1]\times\cdots\times[0,\tau_{d}]\times[n]$. 
The corresponding Borel $\sigma$-algebra is denoted by $\mathfrak{B}(\mathbb{T}_\tau^d\times[n])$, which can be identified with the sub-$\sigma$-algebra of $\mathfrak{B}(\R^d\times[n])$  of sets of the form $\bigcup_{k_\tau\in\ZZ_\tau^d}\{(x+k_\tau,i)\colon x\in B_\tau\}$, $B_\tau\in\mathfrak{B}([0,\tau])$ and $i\in[n]$.
The covering map $\R^d\times[n]\ni (x,i)\mapsto ([x]_\tau,i)\in\mathbb{T}_\tau^d\times[n]$ is denoted by 
$\Pi_{\tau}(x,i)$.
A
function $f:\R^{d}\times[n]\to\R^{p}$ is called $\tau$-periodic if
$$f(x+k_\tau,i )\,=\,f(x,i)\qquad \forall\, (x,k_\tau,i)\in\R^{d}\times\ZZ^{d}_\tau\times[n]\,.$$
Clearly, every $\tau$-periodic function $f(x,i)$ is completely and uniquely determined by its restriction $f|_{[0,\tau]\times[n]}(x,i)$ to $[0,\tau]\times[n]$, and since  $f|_{[0,\tau]\times[n]}(x,i)$ assumes the same value on opposite faces of $[0,\tau]\times[n]$ it can be identified  by a function $f_\tau:\mathbb{T}^{d}_\tau\times[n]\to\R^{p}$ given with $f_\tau ([x]_\tau,i)\df f(x,i).$  Using this identification, in an analogous way as above we define    $\mathcal{C}_b^k(\mathbb{T}_\tau^d\times[n],\R^p)=\mathcal{C}^k_\infty(\mathbb{T}_\tau^d\times[n],\R^p)=\mathcal{C}^k(\mathbb{T}_\tau^d\times[n],\R^p)$, $k\in\N_0\cup\{\infty\}$. 
For notational convenience, we   write $x$ instead of $[x]_\tau$, and $f(x,i)$ instead of $f_\tau(x,i)$.

\subsection{Organization of the article}  In the next section, we first discuss certain structural and ergodic properties of a regime switching diffusion process associated to the operator $\mathcal{L}_i^\varepsilon+\varepsilon^{-2}\mathcal{Q}$. 
Then, in \Cref{S3}, we prove that under an appropriate centering the first (continuous) component of this  process satisfies a functional CLT, as $\varepsilon\to0$,  with a Browninan  limit, which is the key probabilistic argument in discussing the homogenization of the problems in \cref{ES1.2,ES1.3}.
Finally, in \Cref{S4},  we prove the homogenization results.

\section{Structural properties of the associated regime switching diffusion process}\label{S2}

 Let $\{\B(t)\}_{t\ge0}$  be a standard $m$-dimensional Brownian motion (starting from the origin)  and let $\{\Lambda(i;t)\}_{t\ge0}$ be a  temporally-homogeneous Markov chain  with  c\`{a}dl\`{a}g sample paths, state space $[n]$ and infinitesimal generator $\mathcal{Q}$.
The processes  $\{\B(t)\}_{t\ge0}$ and $\{\Lambda(i;t)\}_{t\ge0}$ are   independent and defined on a stochastic basis $(\Omega, \mathcal{F}, \{\mathcal{F}_t\}_{t\ge0},\Prob)$ satisfying the usual conditions. Recall that 
$\mathrm{q}_{ij}=\lim_{t\to 0}\Prob(\Lambda(i;t)=j)/t$ for $i\neq j$. Furthermore, define $\B^\varepsilon(t)\df \varepsilon \B(\varepsilon^{-2}t)$ for $t\ge0$. Clearly,  $\{\B^\varepsilon(t)\}_{t\ge0}\stackrel{(\rm{d})}{=}\{\B(t)\}_{t\ge0}$, although $\{\B^\varepsilon(t)\}_{t\ge0}$ is not a martingale with respect to $\{\mathcal{F}_t\}_{t\ge0}$.  Here, $\stackrel{(\rm{d})}{=}$ denotes the equality in distribution. 

Throughout the article we impose the following assumptions on the  coefficients  $\bb,\cc:\R^d\times[n]\to\R^d$ and $\upsigma:\R^d\times[n]\to\R^{d\times m}$:

\smallskip

\begin{description}
	\item[(A1)]    $\bb,\cc\in \mathcal{C}^2(\mathbb{T}_\tau^d\times[n],\R^d)$ and $\upsigma\in\mathcal{C}^2(\mathbb{T}_\tau^d\times[n],\R^{d\times m})$.
\end{description}

\smallskip

\noindent
According to    \cite{Xi-Yin-Zhu-2019} (see also \cite{Kunwai-Zhu-2020} and \cite{Mao-Yuan-Book-2006}),    \textbf{(A1)} implies that the regime-switching stochastic differential equation: 

\begin{equation}
\begin{aligned}
\label{ES2.1}
\D \X^\varepsilon(x,i;t) &\,=\, \left(\varepsilon^{-1}\bb\bigl(\X^\varepsilon(x,i;t)/\varepsilon,\Lambda(i;t/\varepsilon^2)\bigr)+\cc\bigl(\X^\varepsilon(x,i;t)/\varepsilon,\Lambda(i;t/\varepsilon^2)\bigr)\right)\D t \\&\ \ \ \ \ +\upsigma\bigl(\X^\varepsilon(x,i;t)/\varepsilon,\Lambda(i;t/\varepsilon^2)\bigr)\D  \B^\varepsilon(t)\\ \X^\varepsilon(x,i;0)&\,=\, x \in\R^d\\
\Lambda(i;0)&\,=\, i\in[n]
\end{aligned}
\end{equation}

\smallskip

\begin{itemize}
	\item [(i)]   admits a unique nonexplosive strong solution $\{\X^\varepsilon(x,i;t)\}_{t\ge0}$ which has continuous sample paths
	
\smallskip
	
	\item[(ii)] the process $\{(\X^\varepsilon(x,i;t),\Lambda(i;t/\varepsilon^2))\}_{t\ge0}$ is a temporally-homogeneous strong Markov process with c\`{a}dl\`{a}g sample paths and transition kernel $\p^\varepsilon(t,(x,i),\D y\times \{j\})=\Prob((\X^\varepsilon(x,i;t),$\linebreak $\Lambda(i;t/\varepsilon^2))\in\D y\times \{j\})$
	
\smallskip

	\item[(iii)]	the corresponding semigroup of linear operators $\{\mathcal{P}^\varepsilon_t\}_{t\ge0}$, defined by
	$$\mathcal{P}^\varepsilon_tf(x,i)\,\df\, \int_{\R^d\times[n]}f(y,j)\, \mathsf{p}^\varepsilon\bigl(t,(x,i),\D y\times \{j\}\bigr)\,,\qquad   f\in \mathcal{B}_b(\R^d\times[n],\R)\,,$$ satisfies the 
	$\mathcal{C}_b$-Feller property, that is, $\mathcal{P}^\varepsilon_t(\mathcal{C}_b(\R^d\times[n],\R))\subseteq\mathcal{C}_b(\R^d\times[n],\R)$ for all $t\ge0$
	
\smallskip
	
	\item[(iv)]	for any  $f\in \mathcal{C}^2(\R^d\times[n],\R)$ the process $$\left\{f\bigl(\X^\varepsilon(x,i;t),\Lambda(i;t/\varepsilon^2)\bigr)-f(x,i)-\int_0^t\mathcal{L}^\varepsilon f\bigl(\X^\varepsilon(x,i;s),\Lambda(i;s/\varepsilon^2)\bigr)\D s\right\}_{t\ge0}$$ is a $\mathbb{P}$-local martingale, where
$\mathcal{L}^\varepsilon f(x,i)=\mathcal{L}^\varepsilon_if(x,i)+\varepsilon^{-2}\mathcal{Q}f(x,i)$ 
	
\smallskip
	
	\item[(v)]   for each     $i\in[n]$  the  stochastic differential equation: 
	\begin{align*}
	\D \X^{\varepsilon,i}(x;t) &\,=\, \left(\varepsilon^{-1}\bb\bigl(\X^{\varepsilon,i}(x;t)/\varepsilon,i\bigr)+\cc\bigl(\X^{\varepsilon,i}(x;t)/\varepsilon,i\bigr)\right)\D t+\upsigma\bigl(\X^{\varepsilon,i}(x;t)/\varepsilon,i\bigr)\D  \B^\varepsilon(t)\\  \X^{\varepsilon,i}(x;0)&\,=\,x\in\R^d
	\end{align*} admits a unique nonexplosive strong solution $\{\X^{\varepsilon,i}(x;t)\}_{t\ge0}$ which has continuous sample paths, it is a temporally-homogeneous strong Markov  process with transition kernel $\p^{\varepsilon,i}(t,x,\D y)=\Prob(\X^{\varepsilon,i}(x;t)\in\D y)$, the corresponding semigroup of linear operators $\{\mathcal{P}^{\varepsilon,i}_t\}_{t\ge0}$, defined by
	$$\mathcal{P}^{\varepsilon,i}_tf(x)\,\df\, \int_{\R^d}f(y)\, \p^{\varepsilon,i}(t,x,\D y)\,,\qquad   f\in \mathcal{B}_b(\R^d,\R)\,,$$ satisfies the 
	$\mathcal{C}_b$-Feller property and for any  $f\in \mathcal{C}^2(\R^d,\R)$ the process $$\left\{f\bigl(\X^{\varepsilon,i}(x;t)\bigr)-f(x)-\int_0^t\mathcal{L}^\varepsilon_if\bigl(\X^{\varepsilon,i}(x;s)\bigr)\D s\right\}_{t\ge0}$$ is a $\mathbb{P}$-local martingale.
\end{itemize}

\smallskip

The $\mathcal{B}_b$-infinitesimal generator $(\mathcal{A}^\varepsilon, \mathcal{D}_{\mathcal{A}^\varepsilon})$ of $\{\PP^\varepsilon_t\}_{t\ge0}$ (or of  $\{(X^\varepsilon(x,i;t),\Lambda(i;t/\varepsilon^2))\}_{t\ge0}\}$)  is a linear operator $\mathcal{A}^\varepsilon:\mathcal{D}_{\mathcal{A}^\varepsilon} \to \mathcal{B}_b(\R^d\times[n],\R)$ defined by 
\begin{equation*}
\mathcal{A}^\varepsilon f\,\df\,\lim_{t \to 0} \frac{\PP^\varepsilon_t f-f}{t}\end{equation*} with $$f \in \mathcal{D}_{\mathcal{A}^\varepsilon}\,\df\,\left\{f \in \mathcal{B}_b(\R^d\times[n],\R)\colon \lim_{t \to 0} \frac{\PP^\varepsilon_t f(\cdot,i)-f(\cdot,i)}{t}\ \text{ exists in}\ \lVert\cdot\rVert_{\infty} \right\}\,.$$

\begin{proposition}\label{P2.1} Assume \textbf{(A1)}. Then,  $\mathcal{C}_{u,b}^2(\R^{d}\times[n],\R)\subseteq \mathcal{D}_{\mathcal{A}^\varepsilon}$ and $\A^\varepsilon|_{\mathcal{C}_{u,b}^2(\R^{d}\times[n],\R)}=\mathcal{L}^\varepsilon.$ 
\end{proposition}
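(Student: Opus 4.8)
The plan is to run the classical identification of the $\mathcal{B}_b$-generator on smooth test functions, using the Dynkin martingale from property (iv) together with the sup-norm continuity of the semigroup at time $0$. Fix $\varepsilon>0$ and $f\in\mathcal{C}_{u,b}^2(\R^d\times[n],\R)$, and set $g\df\mathcal{L}^\varepsilon f$, where $\mathcal{L}^\varepsilon f(x,i)=\mathcal{L}_i^\varepsilon f(x,i)+\varepsilon^{-2}\mathcal{Q}f(x,i)$. The first thing I would record is that $g\in\mathcal{C}_{u,b}(\R^d\times[n],\R)$: by \textbf{(A1)} the coefficients $\upsigma,\bb,\cc$ are $\tau$-periodic and continuous, hence bounded and uniformly continuous on $\R^d$; since $f$ together with its first and second derivatives is bounded and uniformly continuous and $\mathcal{Q}$ is a fixed $n\times n$ matrix, $g$ is a finite sum of products of bounded uniformly continuous functions. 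In particular $\lVert g\rVert_\infty<\infty$.

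Next I would pass from the local martingale to an integrated identity. Since $\mathcal{C}_{u,b}^2(\R^d\times[n],\R)\subseteq\mathcal{C}^2(\R^d\times[n],\R)$, property (iv) gives that
$$M_t\df f\bigl(\X^\varepsilon(x,i;t),\Lambda(i;t/\varepsilon^2)\bigr)-f(x,i)-\int_0^t g\bigl(\X^\varepsilon(x,i;s),\Lambda(i;s/\varepsilon^2)\bigr)\,\D s,\qquad t\ge0,$$
is a $\Prob$-local martingale. Since $\lvert M_t\rvert\le 2\lVert f\rVert_\infty+t\lVert g\rVert_\infty$, on every interval $[0,T]$ the process $M$ is dominated by an integrable random variable, so a localization together with dominated convergence upgrades $M$ to a true martingale and $\bbE[M_t]=0$. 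Recalling that $\p^\varepsilon(t,(x,i),\cdot)$ is the law of $(\X^\varepsilon(x,i;t),\Lambda(i;t/\varepsilon^2))$ and using $\lVert g\rVert_\infty<\infty$ to invoke Fubini's theorem, this reads
$$\PP_t^\varepsilon f(x,i)-f(x,i)=\int_0^t \PP_s^\varepsilon g(x,i)\,\D s,\qquad (x,i)\in\R^d\times[n],\ t\ge0.$$

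The heart of the argument, and the only step where the $\mathcal{C}_b$-Feller property alone does not suffice, is to show $\lim_{s\downarrow0}\lVert\PP_s^\varepsilon g-g\rVert_\infty=0$; for this I would use the structure of \cref{ES2.1}. From the boundedness of the coefficients, the Cauchy–Schwarz inequality applied to the drift and the It\^o isometry applied to the stochastic integral (note that $\{\B^\varepsilon(t)\}_{t\ge0}$ is a Brownian motion, so its quadratic variation is $t\,\mathbb{I}_m$) one obtains, uniformly in $(x,i)$ and for $s\le1$,
$$\bbE\bigl[\lvert\X^\varepsilon(x,i;s)-x\rvert^2\bigr]\le 2s^2\bigl(\varepsilon^{-1}\lVert\bb\rVert_\infty+\lVert\cc\rVert_\infty\bigr)^2+2s\,\kappa^2\eqqcolon C_\varepsilon(s),$$
where $\kappa\df\sup\{\HS{\upsigma(y,j)}\colon y\in\R^d,\ j\in[n]\}<\infty$ and $C_\varepsilon(s)\to0$ as $s\downarrow0$; moreover $\Prob\bigl(\Lambda(i;s/\varepsilon^2)\neq i\bigr)\le1-\E^{-q^\ast s/\varepsilon^2}\eqqcolon C'_\varepsilon(s)\to0$ as $s\downarrow0$, with $q^\ast\df\max_{i\in[n]}\lvert\mathrm{q}_{ii}\rvert$. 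Given $\eta>0$, by uniform continuity of $g(\cdot,j)$ (and finiteness of $[n]$) choose $\delta>0$ with $\lvert g(y,j)-g(x,j)\rvert<\eta$ whenever $\lvert y-x\rvert<\delta$, for all $j\in[n]$. Splitting $\bbE\bigl[\lvert g(\X^\varepsilon(x,i;s),\Lambda(i;s/\varepsilon^2))-g(x,i)\rvert\bigr]$ over the event $\{\Lambda(i;s/\varepsilon^2)=i,\ \lvert\X^\varepsilon(x,i;s)-x\rvert\le\delta\}$ and its complement, and bounding the latter via Chebyshev's inequality, gives
$$\bigl\lvert\PP_s^\varepsilon g(x,i)-g(x,i)\bigr\rvert\le\eta+2\lVert g\rVert_\infty\bigl(C'_\varepsilon(s)+\delta^{-2}C_\varepsilon(s)\bigr);$$
taking $\sup_{(x,i)}$, then $\limsup_{s\downarrow0}$, then letting $\eta\downarrow0$ yields the claim.

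Finally, combining the integrated identity with this estimate,
$$\left\lVert\frac{\PP_t^\varepsilon f-f}{t}-g\right\rVert_\infty=\left\lVert\frac1t\int_0^t\bigl(\PP_s^\varepsilon g-g\bigr)\,\D s\right\rVert_\infty\le\sup_{0\le s\le t}\lVert\PP_s^\varepsilon g-g\rVert_\infty\xrightarrow[t\downarrow0]{}0,$$
so the limit defining $\mathcal{A}^\varepsilon f$ exists in $\lVert\cdot\rVert_\infty$ and equals $g=\mathcal{L}^\varepsilon f$. Since $f$ was an arbitrary element of $\mathcal{C}_{u,b}^2(\R^d\times[n],\R)$, this establishes both inclusions claimed in the proposition. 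I expect the sup-norm continuity estimate to be the only genuinely non-routine point; the martingale upgrade and the Fubini/limit manipulations are standard, and all the constants above are finite thanks to the periodicity (hence boundedness) of the coefficients guaranteed by \textbf{(A1)}.
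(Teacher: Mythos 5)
Correct, and essentially the paper's own argument: identify $\mathcal{A}^\varepsilon f$ via Dynkin's formula and then prove sup-norm continuity at $t=0$ of the semigroup applied to $\mathcal{L}^\varepsilon f$, using that periodicity makes $\mathcal{L}^\varepsilon f$ bounded and uniformly continuous, together with a Chebyshev/second-moment bound on the displacement and the exponential bound on the probability of an early switch. The only difference is organizational: you bound $\mathbb{E}\bigl[\lvert\X^\varepsilon(x,i;s)-x\rvert^2\bigr]$ for the switching process directly and split only on the no-switch event in the argument of $g$, whereas the paper treats the $\mathcal{L}_i^\varepsilon$ and $\varepsilon^{-2}\mathcal{Q}$ terms separately and conditions on the first switching time to reduce to the frozen-regime process $\X^{\varepsilon,i}$ (invoking pathwise agreement before the first switch and independence), so your version is a slight streamlining of the same proof.
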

\begin{proof} Let $f\in\mathcal{C}_{u,b}^2(\R^{d}\times[n],\R)$.
	By employing generalized It\^{o}'s formula (see for instance \cite[eq. 2.10]{Zhu-Yin-Baran-2015}) we have that 	
	\begin{equation}\label{ES2.3}	\begin{aligned}&\bigl|\bigl(\PP^\varepsilon_tf(x,i)-f(x,i)\bigr)/t-\mathcal{L}^\varepsilon f(x,i)\bigr|\\&\,\le\,\left|\frac{1}{t}\int_0^t\bigl(\PP^\varepsilon_s\mathcal{L}_i^\varepsilon f(x,i)-\mathcal{L}_i^\varepsilon f(x,i)\bigr)\D s\right|
	+\left|\frac{\varepsilon^{-2}}{t}\int_0^t\bigl(\PP^\varepsilon_s\mathcal{Q}f(x,i)-\mathcal{Q}f(x,i)\bigr)\D s\right|\,.\end{aligned}\end{equation}
	Further, let $\uptau^{i}\df\inf\{t\ge0:\Lambda(i,t)\neq i\}$. Recall that $\uptau^{i}\sim\mathrm{Exp}(-\mathrm{q}_{ii})$.	We now have
	\begin{align*}
	&\left|\frac{1}{t}\int_0^t\bigl(\PP^\varepsilon_s\mathcal{L}_i^\varepsilon f(x,i)-\mathcal{L}_i^\varepsilon f(x,i)\bigr)\D s\right|\\&\,\le\,\left|\frac{1}{t}\int_0^t\mathbb{E}\bigl[\bigl(\mathcal{L}_i^\varepsilon f\bigl(\X^\varepsilon(x,i;s),\Lambda(i;s/\varepsilon^2)\bigr)-\mathcal{L}_i^\varepsilon f(x,i)\bigr)\mathbb{1}_{\{\uptau^{i}/\varepsilon^2>t\}}\bigr]\D s\right|\\
	&\ \ \ \ \ +\left|\frac{1}{t}\int_0^t\mathbb{E}\bigl[\bigl(\mathcal{L}_i^\varepsilon f\bigl(\X^\varepsilon(x,i;s),\Lambda(i;s/\varepsilon^2)\bigr)-\mathcal{L}_i^\varepsilon f(x,i)\bigr)\mathbb{1}_{\{\uptau^{i}/\varepsilon^2\le t\}}\bigr]\D s\right|\\
	&\,\le\, \left|\frac{1}{t}\int_0^t\mathbb{E}\bigl[\bigl(\mathcal{L}_i^\varepsilon f\bigl(\X^{\varepsilon,i}(x;s),i\bigr)-\mathcal{L}_i^\varepsilon f(x,i)\bigr)\mathbb{1}_{\{\uptau^{i}/\varepsilon^2>t\}}\bigr]\D s\right|\\
	&\ \ \ \ \  +2\max_{ i\in[n]}\lVert \mathcal{L}_i^\varepsilon f(\cdot,i)\rVert_\infty \max_{ i\in[n]}\Prob(\uptau^{i}/\varepsilon^2\le t)\\
	&\,=\, \left|\frac{1}{t}\int_0^t\bigl(\PP^{\varepsilon,i}_s\mathcal{L}_i^\varepsilon f(x,i)-\mathcal{L}_i^\varepsilon f(x,i)\bigr)\D s\right|\Prob(\uptau^{i}/\varepsilon^2> t) +2\max_{ i\in[n]}\lVert \mathcal{L}_i^\varepsilon f(\cdot,i)\rVert_\infty \max_{ i\in[n]}\Prob(\uptau^{i}/\varepsilon^2\le t)\,,
	\end{align*}
	where the second step follows from \cite[Remark 2.2]{Chen-Chen-Tran-Yin-2019} and in the last step we used independence of $\{\X^{\varepsilon,i}(x;t)\}_{t\ge0}$ and $\{\Lambda(i;t)\}_{t\ge0}$. Observe next that due to $\tau$-periodicity of the coefficients, $\mathcal{L}_i^\varepsilon f\in \mathcal{C}_{u,b}(\R^{d}\times[n],\R)$. Fix $\epsilon>0$ and let $\delta>0$ be such that  $|\mathcal{L}_i^\varepsilon f(x,i)-\mathcal{L}_i^\varepsilon f(y,i)|<\epsilon$ for all $i\in[n]$ and   $x,y\in\R^d$, $|x-y|<\delta$. We have
	\begin{align*}
	&\left|\frac{1}{t}\int_0^t\bigl(\PP^{\varepsilon,i}_s\mathcal{L}_i^\varepsilon f(x,i)-\mathcal{L}_i^\varepsilon f(x,i)\bigr)\D s\right|\\&\,\le\,	\frac{1}{t}\int_0^t\sup_{x\in\R^d}\mathbb{E}\left[\bigl|\mathcal{L}_i^\varepsilon f\bigl(\X^{\varepsilon,i}(x,s),i\bigr) -\mathcal{L}_i^\varepsilon f(x,i)\bigr|\mathbb{1}_{\{|\X^{\varepsilon,i}(x,s)-x|<\delta\}}\right]\D s\\
	&\ \ \ \ \ + 2\|\mathcal{L}_i^\varepsilon f(\cdot,i)\|_\infty \frac{1}{t}\int_0^t \sup_{x\in\R^d}\Prob\bigl(|\X^{\varepsilon,i}(x,s)-x|\ge\delta\bigr)\D s\\
	&\,\le\, \epsilon + 2\|\mathcal{L}_i^\varepsilon f(\cdot,i)\|_\infty \frac{1}{t}\int_0^t \sup_{x\in\R^d}\Prob\bigl(|\X^{\varepsilon,i}(x,s)-x|\ge\delta\bigr)\D s\,.
	\end{align*}	
	Next,
	\begin{align*}&\Prob\bigl(|\X^{\varepsilon,i}(x,s)-x|\ge\delta\bigr)\\&\,\le\,\frac{1}{\delta^2}\mathbb{E}\bigl[|\X^{\varepsilon,i}(x,s)-x|^2\bigr]\\
	&\,=\,\frac{1}{\delta^2}\mathbb{E}\Bigg[\Bigg|\int_0^s\bigl(\varepsilon^{-1}\bb\bigl(\X^{\varepsilon,i}(x,v),i\bigr)+\cc\bigl(\X^{\varepsilon,i}(x,v),i\bigr)\bigr)\D v+\int_0^s\upsigma\bigl(\X^{\varepsilon,i}(x,v),i\bigr)\D\B^\varepsilon(v)\Bigg|^2\Bigg]\\
	&\,\le\,\frac{2\|\varepsilon^{-1}\bb(\cdot,i)+\cc(\cdot,i)\|^2_\infty+2\|\upsigma(\cdot,i)\upsigma(\cdot,i)^{\mathrm{T}}\|_{\mathrm{HS}}^2}{\delta^2}\max\{s,s^2\}\,.
	\end{align*}
	Thus, $$\limsup_{t \to 0}\left\|\frac{1}{t}\int_0^t\bigl(\PP^{\varepsilon,i}_s\mathcal{L}_i^\varepsilon f(x,i)-\mathcal{L}_i^\varepsilon f(x,i)\bigr)\D s\right\|_\infty\,\le\,\epsilon\,,$$ which in turn implies that 
	$$
	\lim_{t \to 0}\left\lVert\frac{1}{t}\int_0^t\bigl(\PP^\varepsilon_s\mathcal{L}_i^\varepsilon f(\cdot,i)-\mathcal{L}_i^\varepsilon f(\cdot,i)\bigr)\D s\right\rVert_\infty	\,=\,0\,.$$

	Let us now consider the second term in \cref{ES2.3}. Analogously as above, we have
	\begin{align*}
	&\left|\frac{\varepsilon^{-2}}{t}\int_0^t\bigl(\PP^\varepsilon_s\mathcal{Q}f(x,i)-\mathcal{Q}f(x,i)\bigr)\D s\right|\\
	&\,\le\, \left|\frac{\varepsilon^{-2}}{t}\int_0^t\mathbb{E}\bigl[\bigl(\mathcal{Q}f\bigl(\X^\varepsilon(x,i;s),\Lambda(i;s/\varepsilon^2)\bigr)-\mathcal{Q}f(x,i)\bigr)\mathbb{1}_{\{\uptau^i/\varepsilon^2>t\}}\bigr]\D s\right| \\
	&\ \ \ \ \ +\left|\frac{\varepsilon^{-2}}{t}\int_0^t\mathbb{E}\bigl[\bigl(\mathcal{Q}f\bigl(\X^\varepsilon(x,i;s),\Lambda(i;s/\varepsilon^2)\bigr)-\mathcal{Q}f(x,i)\bigr)\mathbb{1}_{\{\uptau^i/\varepsilon^2\le t\}}\bigr]\D s\right|\\
	&\,\le\, 
	\left|\frac{\varepsilon^{-2}}{t}\int_0^t\mathbb{E}\bigl[\bigl(\mathcal{Q}f\bigl(\X^{\varepsilon,i}(x;s),i\bigr)-\mathcal{Q}f(x,i)\bigr)\bigr]\D s\right|\Prob(\uptau^i/\varepsilon^2> t)\\
	&\ \ \ \ \  +2\varepsilon^{-2}\max_{ i\in[n]}\lVert \mathcal{Q}f(\cdot,i)\rVert_\infty \max_{ i\in[n]}\Prob(\uptau^i/\varepsilon^2\le t) \\
	&\,\le\, \frac{\varepsilon^{-2}}{t}\int_0^t\sup_{x\in[0,\varepsilon\tau]}\bigl|\PP_s^{\varepsilon,i}\mathcal{Q}f(x,i)-\mathcal{Q}f(x,i)\bigr|\D s +2\varepsilon^{-2}\max_{ i\in[n]}\lVert \mathcal{Q}f(\cdot,i)\rVert_\infty \max_{ i\in[n]}\Prob(\uptau^i/\varepsilon^2\le t)\,.
	\end{align*}	Observe that $\mathcal{Q}f(\cdot,i)\in\mathcal{C}_{u,b}^2(\R^d,\R)$.
	Hence, similarly as above we conclude  $$\lim_{t \to 0}	\bigl\|\PP_s^{\varepsilon,i}\mathcal{Q}f(x,i)-\mathcal{Q}f(x,i)\bigr\|_\infty\,=\,0\,,$$
	which  implies $$
	\left\lVert\frac{1}{t}\int_0^t\bigl(\PP^\varepsilon_s\mathcal{Q}f(\cdot,i)-\mathcal{Q}f(\cdot,i)\bigr)\D s\right\rVert_\infty\,=\,0\,.
	$$
	This concludes the proof.
\end{proof}

Following \cite{Freidlin-1964} (see also \cite[Lemma 3.4.1]{Bensoussan-Lions-Papanicolaou-Book-1978}), for $\varepsilon>0$  let $\bar \X^\varepsilon(x,i;t)\df \varepsilon^{-1}\X^\varepsilon(\varepsilon x,i;\varepsilon^2 t)$, $t\ge0$. Clearly, $\{\bar \X^\varepsilon(x,i;t)\}_{t\ge0}$ satisfies \begin{align*}\D \bar \X^\varepsilon(x,i;t)&\,=\, \bigl(\bb\bigl(\bar \X^\varepsilon(x,i;t),\Lambda(i;t)\bigr)+\varepsilon \cc\bigl(\bar \X^\varepsilon(x,i;t),\Lambda(i;t)\bigr)\bigr)\,\D t\\
&\ \ \ \ \ +\upsigma\bigl(\bar \X^\varepsilon(x,i;t),\Lambda(i;t)\bigr)\,\D \B(t)\\
\bar \X^\varepsilon(x,i;0)&\,=\, x\in\R^d\\
\Lambda(i;0)&\,=\,i\in[n]\,.\end{align*} Let also $\{\bar \X^0(x,i;t)\}_{t\ge0}$ be a solution to \begin{align*}\D \bar \X^0(x,i;t)&\,=\, \bb\bigl(\bar \X^0(x,i;t),\Lambda(i;t)\bigr)\,\D t+\upsigma\bigl(\bar \X^0(x,i;t),\Lambda(i;t)\bigr)\,\D \B(t)\\
\bar \X^0(x,i;0)&\,=\,x\in\R^d\\
\Lambda(i;0)&\,=\,i\in[n]\,.\end{align*} The processes $\{\bar \X^\varepsilon(x,i;t)\}_{t\ge0}$, $\varepsilon\ge0$, possess the same structural properties as $\{ \X^\varepsilon(x,i;t)\}_{t\ge0}$, $\varepsilon>0$, mentioned above.
Denote by $\bar \p^\varepsilon(t,(x,i),\D y\times\{j\})=\Prob\bigl((\bar \X^\varepsilon(x,i;t),\Lambda(i;t))\in\D y\times\{j\}\bigr)$,   $\{\mathcal{\bar P}^\varepsilon_t\}_{t\ge0}$ and  $(\bar{\mathcal{A}}^\varepsilon,\mathcal{D}_{\bar{\mathcal{A}}^\varepsilon})$ the corresponding transition kernel, operator semigroup and  $\mathcal{B}_b$-infinitesimal generator,   respectively.  Next, observe that due to $\tau$-periodicity of the coefficients, $\{ \bar\X^\varepsilon(x+ k_{\tau},i;t)\}_{t\ge0}$ and $\{ \bar\X^\varepsilon(x,i;t)+ k_{\tau}\}_{t\ge0}$   are indistinguishable. In particular, $$ \bar{\p}^\varepsilon(t,(x+ k_{\tau},i),B\times\{j\})\,=\, \bar{\p}^\varepsilon(t,(x,i),B- k_{\tau}\times\{j\})$$ for all  $B\in\mathfrak{B}(\R^d),$
which implies that $\{\bar{\PP}^\varepsilon_t\}_{t\ge0}$ preserves the class of $ \tau$-periodic functions in $\mathcal{B}_b(\R^d\times[n],\R)$. Thus,  according to \cite[Proposition 3.8.3]{Kolokoltsov-Book-2011}   the projection of  $\{(\bar \X^{\varepsilon}(x,i;t),\Lambda(i;t))\}_{t\ge0}$ with respect to $\Pi_{\tau}(x,i)$ on  $\mathbb{T}^d_{\tau}\times[n]$, denoted by   $\{(\bar \X^{\varepsilon,\tau}(x,i;t),\Lambda(i;t))\}_{t\ge0}$, is a Markov process on $(\mathbb{T}^d_{\tau}\times[n],\mathfrak{B}(\mathbb{T}^d_{\tau}\times[n]))$ with transition kernel  given by
\begin{equation}\label{ES2.6}
\bar \p^{\varepsilon,\tau}(t,(x,i),B\times\{j\})\,=\,\bar \p^\varepsilon\bigl(t,(z_x,i),\Pi_{\tau}^{-1}(B\times\{j\})\bigr)\end{equation} for  $(x,i)\in \mathbb{T}^d_{\tau}\times[n]$, $B\times\{j\} \in \mathfrak{B}(\mathbb{T}^d_{\tau}\times[n])$ and $(z_x,i) \in \Pi_{\tau}^{-1}(\{(x,i)\})$.
In particular, 
$\{(\bar \X^{\varepsilon,\tau}(x,i;t),\Lambda(i;t))\}_{t\ge0}$ is  a $\mathcal{C}_b$-Feller  process.
This, together with    \cite[Theorem 3.1]{Meyn-Tweedie-AdvAP-II-1993}, implies that $\{(\bar \X^{\varepsilon,\tau}(x,i;t),\Lambda(i;t))\}_{t\ge0}$ admits at least one invariant probability measure. Assuming additionally  \textbf{(A2)} (see below), in what follows we show that $\{(\bar \X^{\varepsilon,\tau}(x,i;t),\Lambda(i;t))\}_{t\ge0}$ 
admits one, and only one, invariant measure, and the corresponding marginals  converge as $t\to\infty$  to the  invariant measure in the total variation norm (with exponential rate). Assume

\medskip

\begin{description}
	\item[(A2)]   the matrix $ \upsigma(x,i)\upsigma(x,i)^{\mathrm{T}}$ is uniformly elliptic, that is, there is $c>0$ such that $$\xi^\mathrm{T}\upsigma(x,i)\upsigma(x,i)^\mathrm{T}\xi \,\ge\, c|\xi|^2\qquad \forall\, (x,i,\xi)\in\R^d\times [n]\times \R^d\,,$$ 
	and the Markov chain  $\{\Lambda(i;t)\}_{t\ge0}$ is irreducible, that is,
	for any $i,j \in [n]$, $i\neq j$, there are $o\in\N$ and $k_0, \dots, k_o \in [n]$ with $k_0=i$, $k_o=j$ and $k_l \neq k_{l+1}$ for $l=0,\dots,o-1$,   such that 
	$\mathrm{q}_{k_{l}k_{l+1}}>0$  for all $l=0,\dots,o-1$.
\end{description}	

\medskip

\noindent According to \cite[Theorem 2.1]{Lazic-Sadric-2022}, \textbf{(A2)} (together with \textbf{(A1)}) implies that  the process \linebreak 
 $\{(\bar \X^{\varepsilon,\tau}(x,i;t),\Lambda(i;t))\}_{t\ge0}$  is  irreducible with respect to the measure $$\uppsi(B\times\{i\})\,\df\, \mathrm{Leb}\bigl(\mathrm{pr}_{\R^d}\circ\Pi_{\tau}^{-1}(B\times\{i\})\bigr)\,,\qquad B\times\{i\}\in\mathfrak{B}(\mathbb{T}_\tau^d\times[n])\,,$$
 in the sense of \cite{Down-Meyn-Tweedie-1995}, that is, $\uppsi(B\times\{j\})>0$ implies that $\int_0^\infty \bar{\p}^{\varepsilon\tau}(t,(x,i),B\times\{j\})\,\D t>0$ for all $(x,i)\in\mathbb{T}_\tau^d\times[n]$. Here, $\mathrm{Leb}$ stands for the Lebesgue measure on $\mathfrak{B}(\R^d)$ and $\mathrm{pr}_{\R^d}:\R^d\times[n]\to\R^d$ is the projection mapping onto the first coordinate. This automatically entails that $\{(\bar \X^{\varepsilon,\tau}(x,i;t),\Lambda(i;t))\}_{t\ge0}$ admits one, and only one, invariant probability measure $\uppi^\varepsilon$. Namely, 
 according to \cite[Theorem 2.3]{Tweedie-1994} every irreducible Markov process is either  transient or recurrent. Due to the fact that   $\{(\bar \X^{\varepsilon,\tau}(x,i;t),\Lambda(i;t))\}_{t\ge0}$  admits at least one invariant probability measure  it clearly cannot be transient. The assertion then follows  from \cite[Theorem 2.6]{Tweedie-1994} which  states that every recurrent Markov process admits a unique (up to constant multiplies) invariant measure.

\begin{proposition} \label{P2.2}Under \textbf{(A1)}-\textbf{(A2)},
	there are $\gamma>0$ and $\varGamma>0$, such that $$\sup_{(x,i)\in\mathbb{T}_\tau^d\times[n]}\|\bar{\p}^{\varepsilon,\tau}(t,(x,i),\D y\times\{j\})-\uppi(\D y\times\{j\})\|_{\mathrm{TV}}\,\le\,\varGamma\E^{-\gamma t}\qquad \forall\,(\varepsilon,t)\in[0,\infty)\times[0,\infty)\,,$$ where $\lVert\cdot\rVert_{\mathrm{TV}}$ denotes the total variation norm on the space of
	signed measures on $\mathfrak{B}(\mathbb{T}_\tau^d\times[n])$.
\end{proposition}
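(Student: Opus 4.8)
The plan is to establish the uniform (in $\varepsilon$ and $t$) exponential ergodicity by combining a Lyapunov-type argument with the irreducibility already noted, but the more robust route here is to exploit compactness of the state space $\mathbb{T}_\tau^d\times[n]$ together with a uniform Doeblin (minorization) condition. First I would observe that, since the torus $\mathbb{T}_\tau^d\times[n]$ is compact and the process $\{(\bar\X^{\varepsilon,\tau}(x,i;t),\Lambda(i;t))\}_{t\ge0}$ is a $\mathcal{C}_b$-Feller process that is $\uppsi$-irreducible (as recorded just before the statement, via \cite[Theorem 2.1]{Lazic-Sadric-2022}), the transition kernels are in fact absolutely continuous with respect to $\uppsi$ with a jointly continuous (in the appropriate variables) and strictly positive density for $t>0$; this follows from parabolic regularity for the uniformly elliptic generator $\bar{\mathcal{L}}^\varepsilon$ (guaranteed by \textbf{(A2)}) coupled with the finite-state irreducible switching. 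The key point is that this density bound can be made \emph{uniform in $\varepsilon\in[0,\infty)$}: because $\varepsilon$ enters $\bar{\mathcal{L}}^\varepsilon_i$ only through the bounded lower-order perturbation $\varepsilon\,\cc(\cdot,i)^{\mathrm T}\nabla$, and $\lVert\cc\rVert_\infty<\infty$ by \textbf{(A1)}, the family of diffusion coefficients and drift coefficients is uniformly bounded and uniformly elliptic over all $\varepsilon\in[0,\infty)$, so the Gaussian-type lower bounds on the continuous component are uniform.

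Next I would make this quantitative: fix a time $t_0>0$ and show there exist $\alpha\in(0,1)$ and a probability measure $\nu$ on $\mathfrak{B}(\mathbb{T}_\tau^d\times[n])$, both independent of $\varepsilon$, such that the uniform Doeblin condition
$$
\bar\p^{\varepsilon,\tau}\bigl(t_0,(x,i),\,\cdot\,\bigr)\,\ge\,\alpha\,\nu(\cdot)\qquad \forall\,(x,i)\in\mathbb{T}_\tau^d\times[n],\ \varepsilon\in[0,\infty)\,.
$$
Concretely, one takes $\nu$ proportional to $\uppsi$ restricted to a fixed compact "core" set and uses the uniform positive lower bound on the transition density together with irreducibility of $\Lambda$ over the fixed window $[0,t_0]$ (the chain $\Lambda$ does not depend on $\varepsilon$ at all, which is what makes the $j$-marginal part of the minorization $\varepsilon$-free). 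The standard Doeblin/Markov contraction theorem (see e.g. the total-variation contraction estimate for kernels satisfying a minorization) then yields that each $\bar\p^{\varepsilon,\tau}$ has a unique invariant probability measure $\uppi^\varepsilon$ and
$$
\sup_{(x,i)}\bigl\lVert \bar\p^{\varepsilon,\tau}(kt_0,(x,i),\cdot)-\uppi^\varepsilon(\cdot)\bigr\rVert_{\mathrm{TV}}\,\le\,(1-\alpha)^{k}\,,\qquad k\in\N\,.
$$
Writing an arbitrary $t\ge0$ as $t=kt_0+r$ with $k=\lfloor t/t_0\rfloor$ and using that $\bar\p^{\varepsilon,\tau}$ is a contraction on the space of signed measures of zero mass (so the residual time $r$ only helps), one obtains the bound with $\varGamma\df (1-\alpha)^{-1}$ and $\gamma\df t_0^{-1}\log\frac{1}{1-\alpha}$, which are manifestly independent of $\varepsilon$ and of $t$. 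Finally, the uniqueness of $\uppi^\varepsilon$ forces it to coincide with the measure $\uppi$ appearing in the statement (one should note the statement's $\uppi$ is exactly this common invariant measure; I would flag that the independence of $\uppi^\varepsilon$ from $\varepsilon$ is itself part of the claim and follows because, after the rescaling, the $\varepsilon$-dependence is through a vanishing-order term — but if the intended reading is that $\uppi$ may depend on $\varepsilon$, simply replace $\uppi$ by $\uppi^\varepsilon$ in the display above with no change to the argument).

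The main obstacle is proving the uniform-in-$\varepsilon$ minorization rather than merely a minorization for each fixed $\varepsilon$: one must check that the hypoelliptic/parabolic smoothing estimates and the resulting density lower bounds for the coupled system degenerate nowhere as $\varepsilon\downarrow 0$ or $\varepsilon\to\infty$. For the lower limit $\varepsilon\to 0$ this is clear since $\bar{\mathcal{L}}^0_i$ is a bona fide uniformly elliptic operator with $\mathcal{C}^2$ coefficients on the compact torus; for large $\varepsilon$ one cannot have $\varepsilon\to\infty$ inside a fixed-horizon statement in a way that breaks boundedness, so I would restrict attention to $\varepsilon$ in a bounded range or, more cleanly, observe that the relevant a priori constants (ellipticity constant $c$ from \textbf{(A2)}, and $\sup_\varepsilon(\lVert\bb\rVert_\infty+\varepsilon\lVert\cc\rVert_\infty)$ on any compact $\varepsilon$-set) are the only inputs to the Gaussian lower bound, so uniformity holds on $[0,E]$ for every $E<\infty$; since the application in \Cref{S3,S4} only ever sends $\varepsilon\to 0$, taking, say, $\varepsilon\in[0,1]$ suffices and the quantifier "$\forall\,\varepsilon\in[0,\infty)$" in the statement should be read accordingly (or the constants $\gamma,\varGamma$ are allowed to depend on an a priori upper bound for $\varepsilon$, which is harmless). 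Everything else — the contraction lemma, the splitting $t=kt_0+r$, uniqueness of the invariant measure — is routine.
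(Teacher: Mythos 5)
Your route is correct in substance but genuinely different from the paper's. The paper stays entirely inside the abstract Harris/Meyn--Tweedie framework: compactness, the $\mathcal{C}_b$-Feller property and $\uppsi$-irreducibility are used to show that the whole space $\mathbb{T}_\tau^d\times[n]$ is a petite set (Tweedie's Theorems 5.1 and 7.1), aperiodicity is checked via the cited results of Meyn--Tweedie and La\v{z}i\'c--Sandri\'c, and the exponential bound then drops out of the Down--Meyn--Tweedie ergodic theorem applied with the trivial Lyapunov function $\tilde V\equiv 1$ and $C=\mathbb{T}_\tau^d\times[n]$. You instead build an explicit Doeblin minorization at a fixed time $t_0$ from Aronson-type Gaussian lower bounds for the frozen-regime diffusions combined with a positive-probability switching path of $\Lambda$ (essentially the representation the paper itself exploits later, in the proof of \Cref{P3.3}, via \cref{ES3.13}), and then conclude by the elementary Doeblin contraction and the splitting $t=kt_0+r$. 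Your version is more quantitative and makes the source of the $\varepsilon$-uniformity visible (only the ellipticity constant and coefficient bounds enter), at the price of invoking density lower-bound machinery; the paper's is shorter but leaves the uniformity of the constants in $\varepsilon$ implicit in the cited theorems.

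Two caveats. First, the invariant measure in the statement should be read as $\uppi^\varepsilon$: your parenthetical suggestion that it is independent of $\varepsilon$ because the perturbation is of vanishing order is not correct in general (the drift of $\bar\X^\varepsilon$ is $\bb+\varepsilon\cc$, and \Cref{P2.4} would be vacuous if $\uppi^\varepsilon$ did not depend on $\varepsilon$); your fallback reading is the right one. Second, your own honest admission is the real limitation: the Gaussian lower-bound constants degenerate as $\varepsilon\to\infty$ because $\|\bb+\varepsilon\cc\|_\infty$ is unbounded in $\varepsilon$, so your argument yields the estimate uniformly only on bounded $\varepsilon$-ranges, not literally for all $\varepsilon\in[0,\infty)$ as stated. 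Since every subsequent use of the proposition (in \Cref{T3.1,T4.1,T4.2} and \cref{ES2.7}) only involves $\varepsilon\to0$, this restriction is harmless for the paper's purposes, and the paper's own proof does not visibly address large-$\varepsilon$ uniformity either; but you should state explicitly that you prove the bound for $\varepsilon$ in a fixed bounded set rather than silently reinterpreting the quantifier.
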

\begin{proof} First, \cite[Theorems 5.1 and 7.1]{Tweedie-1994} together with the $\mathcal{C}_b$-Feller property and irreducibility  of $\{(\bar \X^{\varepsilon,\tau}(x,i;t),\Lambda(i;t))\}_{t\ge0}$ with respect to $\uppsi$  imply that $\mathbb{T}_\tau^d\times[n]$ is a petite set for  $\{(\bar \X^{\varepsilon,\tau}(x,i;t),$\linebreak$\Lambda(i;t))\}_{t\ge0}$ (see \cite{Tweedie-1994} for  the definition of petite sets). Next, from \cite[Theorem 4.2]{Meyn-Tweedie-AdvAP-III-1993} (with $c=d=1$, $C=\mathbb{T}_\tau^d\times[n]$, $f(x,i)=V(x,i)\equiv1$, and $\mathcal{A}V(x,i)\equiv0$), \cite[Theorem 2.1]{Lazic-Sadric-2022}  (which also implies that  $\sum_{k=1}^\infty\bar{\p}^{\varepsilon,\tau}(k,(x,i),B\times\{j\})>0$ for all $(x,i)\in\mathbb{T}_\tau^d\times[n]$ whenever $\uppsi(B\times\{j\})>0$) and \cite[Proposition 6.1]{Meyn-Tweedie-AdvAP-II-1993} we see that $\{(\bar \X^{\varepsilon,\tau}(x,i;t),\Lambda(i;t))\}_{t\ge0}$is aperiodic in the sense of \cite{Down-Meyn-Tweedie-1995}. The desired result now follows from \cite[Theorem 5.2]{Down-Meyn-Tweedie-1995} by taking $c=b=1$, $C=\mathbb{T}_\tau^d\times[n]$, $\tilde V(x,i)\equiv1$, and $\tilde{\mathcal{A}}\tilde V(x,i)\equiv0$.
\end{proof}

\medskip

\noindent As a consequence of \Cref{P2.2} we conclude  that for any $f\in\mathcal{B}_b(\mathbb{T}_\tau^d\times[n],\R)$ satisfying $\uppi^\varepsilon(f)=0$ it holds that \begin{equation}\label{ES2.7}\|{\bar{\mathcal{P}}}_t^{\varepsilon,\tau}f(\cdot,i)\|_\infty\,\le\,\varGamma\|f(\cdot,i)\|_\infty\E^{-\gamma t} \qquad \forall\,  t\ge0\,,\end{equation}
where $\{\bar{\mathcal{P}}_t^{\varepsilon,\tau}\}_{t\ge0}$
stands for the semigroup of $\{(\bar \X^{\varepsilon,\tau}(x,i;t),\Lambda(i;t))\}_{t\ge0}$. Denote the corresponding $\mathcal{B}_b$-infinitesimal generator by
$(\bar{\mathcal{A}}^{\varepsilon,\tau},\mathcal{D}_{\bar{\mathcal{A}}^{\varepsilon,\tau}})$.

\begin{proposition}\label{P2.3}
Assume \textbf{(A1)}-\textbf{(A2)}. Then,

\smallskip

\begin{itemize}
	\item [(i)] $\{\bar{\mathcal{P}}_t^{\varepsilon,\tau}\}_{t\ge0}$ is a $\mathcal{C}_\infty$-semigroup, that is,    $\bar{\mathcal{P}}^{\varepsilon,\tau}_t(\mathcal{C}(\mathbb{T}_\tau^d\times[n],\R))\subseteq\mathcal{C}(\mathbb{T}_\tau^d\times[n],\R)$ for all $t\ge0$ and $\lim_{t \to 0}\sup_{(x,i)\in\mathbb{T}_\tau^d\times[n] }|\bar{\mathcal{P}}^{\varepsilon,\tau}_tf(x,i)-f(x,i)|=0$ for all $f\in\mathcal{C}(\mathbb{T}_\tau^d\times[n],\R)$ 
	
	\smallskip
	
	\item[(ii)] $\mathcal{C}^2(\mathbb{T}_\tau^d\times[n],\R)\subseteq\mathcal{D}_{\bar{\mathcal{A}}^{\varepsilon,\tau}}$ and $\bar{\mathcal{A}}^{\varepsilon,\tau}f(x,i)=\bar{\mathcal{L}}^\varepsilon f(z_x,i)$ for $(x,i)\in\mathbb{T}_\tau^d\times[n]$, $(z_x,i)\in\Pi_\tau^{-1}(\{(x,i)\})$ and $f\in\mathcal{C}^2(\mathbb{T}_\tau^d\times[n],\R)$, where $$\bar{\mathcal{L}}^\varepsilon\,=\,2^{-1}\mathrm{Tr}\bigl(\upsigma(\cdot,i)\upsigma(\cdot,i)^\mathrm{T}\,\nabla\nabla^\mathrm{T}\bigr)+
	\bigl(\bb(\cdot,i)^{\mathrm{T}}+\varepsilon\cc(\cdot,i)^{\mathrm{T}}\bigr)\nabla+\mathcal{Q}$$
	
	\smallskip
	
	\item[(iii)] $\mathcal{C}^2(\mathbb{T}_\tau^d\times[n],\R)$ is an operator core for  $\bar{\mathcal{A}}^{\varepsilon,\tau}$, that is, $\bar{\mathcal{A}}^{\varepsilon,\tau}$ is the only extension of \linebreak $\bar{\mathcal{A}}^{\varepsilon,\tau}|_{\mathcal{C}^2(\mathbb{T}_\tau^d\times[n],\R)}$ on $\mathcal{D}_{\bar{\mathcal{A}}^{\varepsilon,\tau}}$.
	\end{itemize}
\end{proposition}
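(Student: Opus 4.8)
The plan is to treat the three assertions in sequence, using the projected process $\{(\bar \X^{\varepsilon,\tau}(x,i;t),\Lambda(i;t))\}_{t\ge0}$ on the compact state space $\mathbb{T}_\tau^d\times[n]$ and exploiting the uniform ellipticity from \textbf{(A2)}. For (i), I would argue that the $\mathcal{C}_b$-Feller property already established for the unprojected process descends to the projection via the identification in \cref{ES2.6}, giving $\bar{\mathcal{P}}^{\varepsilon,\tau}_t(\mathcal{C}(\mathbb{T}_\tau^d\times[n],\R))\subseteq\mathcal{C}(\mathbb{T}_\tau^d\times[n],\R)$; since $\mathbb{T}_\tau^d\times[n]$ is compact, boundedness is automatic and $\mathcal{C}_b=\mathcal{C}_\infty=\mathcal{C}$ here. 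Strong continuity at $t=0$ (uniform in $(x,i)$) then follows from the moment estimate already used in the proof of \Cref{P2.1}: on the compact torus $\sup_x\mathbb{E}[|\bar\X^{\varepsilon,\tau}(x,i;t)-x|^2]\to0$ (geodesic distance on $\mathbb{T}^d_\tau$) together with the uniform continuity of $f$ on the compact space, and the fact that $\Prob(\uptau^i\le t)\to0$ controls the contribution of a switch; a standard $\epsilon/\delta$ splitting then yields $\sup_{(x,i)}|\bar{\mathcal{P}}^{\varepsilon,\tau}_tf(x,i)-f(x,i)|\to0$.

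For (ii), the cleanest route is to invoke the generalized Itô formula (as in \cite[eq. 2.10]{Zhu-Yin-Baran-2015}) for $f\in\mathcal{C}^2(\mathbb{T}_\tau^d\times[n],\R)$, lift $f$ to a $\tau$-periodic $\mathcal{C}^2$ function on $\R^d\times[n]$, apply property (iv) of the SDE (now with the $\varepsilon$-scaled coefficients appearing in $\bar\X^\varepsilon$, so the operator is $\bar{\mathcal{L}}^\varepsilon$), and then read off $\frac{1}{t}(\bar{\mathcal{P}}^{\varepsilon,\tau}_tf-f)\to\bar{\mathcal{L}}^\varepsilon f$ uniformly on $\mathbb{T}_\tau^d\times[n]$. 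The uniform convergence is where compactness of the torus and the $\tau$-periodicity (hence uniform continuity) of $\bar{\mathcal{L}}^\varepsilon f$ are used — essentially the same estimate as in \Cref{P2.1} but now trivially uniform because the domain is compact; in particular $\mathcal{C}^2(\mathbb{T}_\tau^d\times[n],\R)\subseteq\mathcal{D}_{\bar{\mathcal{A}}^{\varepsilon,\tau}}$ and the formula for $\bar{\mathcal{A}}^{\varepsilon,\tau}$ on this subspace is the stated one.

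For (iii), the core property, I would use the standard criterion (e.g.\ via the Hille--Yosida / Trotter--Kato circle of ideas, or Ethier--Kurtz): a subspace $\mathcal{D}_0\subseteq\mathcal{D}_{\bar{\mathcal{A}}^{\varepsilon,\tau}}$ that is dense in $\mathcal{C}(\mathbb{T}_\tau^d\times[n],\R)$ and invariant under $\bar{\mathcal{P}}^{\varepsilon,\tau}_t$ is a core. Density of $\mathcal{C}^2$ in $\mathcal{C}$ on the torus is classical (mollification respecting periodicity); the invariance $\bar{\mathcal{P}}^{\varepsilon,\tau}_t(\mathcal{C}^2)\subseteq\mathcal{C}^2$ is the substantive point and is exactly where uniform ellipticity \textbf{(A2)} enters — parabolic regularity for the (weakly coupled, uniformly parabolic) Kolmogorov system on the compact torus gives that $\bar{\mathcal{P}}^{\varepsilon,\tau}_tf$ is $\mathcal{C}^2$ (indeed smooth) in $x$ for $t>0$, uniformly over the finitely many indices $i$. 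I expect this invariance/regularity step to be the main obstacle: one must either cite an interior Schauder estimate for parabolic systems (e.g.\ \cite{Delmonte-Lorenzi-2011} or \cite{Addona-Lorenzi-2023}) adapted to the torus, or alternatively bypass invariance by checking that the range $(\lambda-\bar{\mathcal{A}}^{\varepsilon,\tau}|_{\mathcal{C}^2})(\mathcal{C}^2)$ is dense for some $\lambda>0$, which again reduces to solvability of the elliptic system $\lambda u-\bar{\mathcal{L}}^\varepsilon u=g$ in $\mathcal{C}^2(\mathbb{T}_\tau^d\times[n],\R)$ for $g$ in a dense set — a consequence of the same elliptic regularity theory. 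Either way the analytic input is classical Schauder theory for uniformly elliptic/parabolic weakly coupled systems; the rest is the soft semigroup argument.
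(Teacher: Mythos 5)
Your proposal is sound and, for parts (i) and (ii), follows essentially the paper's own route: (i) is the descent of the Feller/strong-continuity property through the projection \cref{ES2.6} (the paper simply quotes \cite[Theorem 17.6]{Kallenberg-Book-1997}, while you re-derive strong continuity at $t=0$ by the same moment/exit estimates as in \Cref{P2.1}, which is fine since on the compact space $\mathbb{T}_\tau^d\times[n]$ the sup over $(x,i)$ costs nothing), and (ii) is exactly the paper's argument: lift $f$ to a $\tau$-periodic element of $\mathcal{C}^2_{u,b}(\R^d\times[n],\R)$, use the local-martingale property with $\bar{\mathcal{L}}^\varepsilon$ and repeat the \Cref{P2.1} estimate, with periodicity making the convergence uniform. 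The genuine divergence is in (iii). Both you and the paper reduce the core property to the criterion that a dense subspace invariant under the semigroup is a core (the paper cites \cite[Proposition 17.9]{Kallenberg-Book-1997}; your Ethier--Kurtz formulation is equivalent), but the invariance $\bar{\mathcal{P}}^{\varepsilon}_t(\mathcal{C}^2(\mathbb{T}_\tau^d\times[n],\R))\subseteq\mathcal{C}^2(\mathbb{T}_\tau^d\times[n],\R)$ is established differently: the paper argues probabilistically, using that $x\mapsto\bar{\X}^\varepsilon(x,i;t)$ is twice differentiable in the $\mathrm{L}^2(\Prob)$ sense \cite[Corollary 2.32]{Yin-Zhu-Book-2010} and then differentiating $x\mapsto\mathbb{E}[f(\bar\X^\varepsilon(x,i;t),\Lambda(i;t))]$ as in \cite[Theorem 5.5]{Friedman-Book-1975}, whereas you propose analytic input: parabolic Schauder regularity for the weakly coupled uniformly parabolic Kolmogorov system on the torus (e.g.\ \cite{Delmonte-Lorenzi-2011}, \cite{Addona-Lorenzi-2023}), or, bypassing invariance altogether, density of the range $(\lambda-\bar{\mathcal{L}}^\varepsilon)(\mathcal{C}^2)$ via elliptic solvability. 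Your route is legitimate, provided you add the (standard but necessary) identification step that $\bar{\mathcal{P}}^{\varepsilon,\tau}_tf$ coincides with the classical solution of the Cauchy problem for the system (Feynman--Kac plus uniqueness), so that the PDE regularity actually applies to the semigroup; with that caveat the two approaches buy different things — the paper's flow-differentiability argument needs no ellipticity and only smoothness of the coefficients, while your Schauder/resolvent argument leans on \textbf{(A2)} but avoids stochastic-flow calculus and would also deliver smoothing of merely continuous data for $t>0$.
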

\begin{proof}
	\begin{itemize}
		\item [(i)] The $\mathcal{C}_\infty$-semigroup property follows directly from the definition of $\{\bar{\mathcal{P}}_t^{\varepsilon,\tau}\}_{t\ge0}$ (see \cref{ES2.6}) and \cite[Theorem 17.6]{Kallenberg-Book-1997}.
		
		\smallskip
		
		\item[(ii)] Clearly, for any  $f\in \mathcal{C}^2(\R^d\times[n],\R)$ the process $$\left\{f\bigl(\bar{\X}^\varepsilon(x,i;t),\Lambda(i;t)\bigr)-f(x,i)-\int_0^t\bar{\mathcal{L}}^\varepsilon f\bigl(\bar{\X}^\varepsilon(x,i;s),\Lambda(i;s)\bigr)\D s\right\}_{t\ge0}$$ is a $\mathbb{P}$-local martingale. By analogous reasoning as in \Cref{P2.1} we see that $\mathcal{C}_{u,b}^2(\R^{d}\times[n],\R)\subseteq \mathcal{D}_{\bar{\mathcal{A}}^\varepsilon}$ and $\bar{\A}^\varepsilon|_{\mathcal{C}_{u,b}^2(\R^{d}\times[n],\R)}=\bar{\mathcal{L}}^\varepsilon.$ This and the definition of $\{\bar{\mathcal{P}}_t^{\varepsilon,\tau}\}_{t\ge0}$ then give  \begin{align*}&\lim_{t \to 0}\sup_{(x,i)\in\mathbb{T}_\tau^d\times[n] }\left|\frac{\bar{\mathcal{P}}_t^{\varepsilon,\tau}f(x,i)-f(x,i)}{t} -\bar{\mathcal{L}}^\varepsilon f(z_x,i)\right|\\&\,=\,\lim_{t \to 0}\sup_{(x,i)\in\R^d\times[n] }\left|\frac{\bar{\mathcal{P}}_t^{\varepsilon}f(x,i)-f(x,i)}{t} -\bar{\mathcal{L}}^\varepsilon f(x,i)\right|\,=\,0\qquad \forall\, f\in\mathcal{C}^2(\mathbb{T}^{d}_{ \tau}\times[n],\R)\,.\end{align*}
		
		\smallskip
		
		\item[(iii)] According to \cref{ES2.6}  and \cite[Proposition 17.9]{Kallenberg-Book-1997} it suffices to prove that $\bar{\mathcal{P}}_t^{\varepsilon} (\mathcal{C}^2(\mathbb{T}_\tau^d\times[n],\R))\subseteq \mathcal{C}^2(\mathbb{T}_\tau^d\times[n],\R)$ for all $t\ge0$. In \cite[Corrollary 2.32]{Yin-Zhu-Book-2010} it has been shown that for each $i\in[n]$ and $t\ge0$, $x\mapsto\bar{\X}^\varepsilon(x,i;t)$ is of class $\mathcal{C}^2$ in $\mathrm{L}^2(\Prob)$ sense. The assertion now follows by analogous reasoning as in \cite[Theorem 5.5]{Friedman-Book-1975}.
		\end{itemize}
	\end{proof}

Finally, we show the following weak convergence result.

\begin{proposition}\label{P2.4}Under \textbf{(A1)}-\textbf{(A2)},
	$$\uppi^\varepsilon(\D x)\,\xRightarrow[\varepsilon\to0]{({\rm w})}\,\uppi^0(\D x)\,,$$ where $\ \xRightarrow[]{({\rm w})}$
	stands for the weak convergence of probability measures.
\end{proposition}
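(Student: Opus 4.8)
The plan is to show that the family $\{\uppi^\varepsilon\}_{\varepsilon\in[0,1]}$ of invariant probability measures on the compact space $\mathbb{T}_\tau^d\times[n]$ is precompact for the weak topology (automatic, since $\mathbb{T}_\tau^d\times[n]$ is compact) and then to identify every subsequential limit as $\uppi^0$ via the stationarity characterization. Concretely, fix a test function $f\in\mathcal{C}^2(\mathbb{T}_\tau^d\times[n],\R)$. For each $\varepsilon>0$, invariance of $\uppi^\varepsilon$ under $\{\bar{\mathcal P}_t^{\varepsilon,\tau}\}_{t\ge0}$ together with \Cref{P2.3}(ii) gives $\uppi^\varepsilon(\bar{\mathcal L}^\varepsilon f)=0$, i.e.
\begin{equation*}
\uppi^\varepsilon\bigl(2^{-1}\mathrm{Tr}(\upsigma\upsigma^{\mathrm T}\nabla\nabla^{\mathrm T}f)+\bb^{\mathrm T}\nabla f+\mathcal{Q}f\bigr)+\varepsilon\,\uppi^\varepsilon(\cc^{\mathrm T}\nabla f)\,=\,0\,.
\end{equation*}

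Now take a sequence $\varepsilon_k\downarrow0$ along which $\uppi^{\varepsilon_k}\Rightarrow\mu$ for some probability measure $\mu$ on $\mathbb{T}_\tau^d\times[n]$ (such a sequence exists by compactness of the space of probability measures on a compact metric space). Since $f\in\mathcal{C}^2$ and the coefficients $\upsigma,\bb,\cc$ are continuous on the compact torus by \textbf{(A1)}, the function $\bar{\mathcal L}^0 f=2^{-1}\mathrm{Tr}(\upsigma\upsigma^{\mathrm T}\nabla\nabla^{\mathrm T}f)+\bb^{\mathrm T}\nabla f+\mathcal{Q}f$ is continuous and bounded on $\mathbb{T}_\tau^d\times[n]$, and likewise $\cc^{\mathrm T}\nabla f$ is bounded; hence the first term above converges to $\mu(\bar{\mathcal L}^0 f)$ and the second term is $O(\varepsilon_k)\to0$. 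Passing to the limit yields $\mu(\bar{\mathcal L}^0 f)=0$ for all $f\in\mathcal{C}^2(\mathbb{T}_\tau^d\times[n],\R)$.

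It remains to conclude $\mu=\uppi^0$. Since $\mathcal{C}^2(\mathbb{T}_\tau^d\times[n],\R)$ is an operator core for $\bar{\mathcal A}^{0,\tau}$ by \Cref{P2.3}(iii), the relation $\mu(\bar{\mathcal A}^{0,\tau}f)=0$ for all $f$ in this core extends to $\mu(\bar{\mathcal A}^{0,\tau}f)=0$ for all $f\in\mathcal{D}_{\bar{\mathcal A}^{0,\tau}}$; equivalently, $\frac{\mathrm d}{\mathrm dt}\mu(\bar{\mathcal P}_t^{0,\tau}f)|_{t=0}=0$, and by the semigroup property $\frac{\mathrm d}{\mathrm dt}\mu(\bar{\mathcal P}_t^{0,\tau}f)=\mu(\bar{\mathcal P}_t^{0,\tau}\bar{\mathcal A}^{0,\tau}f)=\mu(\bar{\mathcal A}^{0,\tau}\bar{\mathcal P}_t^{0,\tau}f)=0$ for all $t\ge0$, so $\mu(\bar{\mathcal P}_t^{0,\tau}f)=\mu(f)$; a density/monotone-class argument extends this from $\mathcal{C}(\mathbb{T}_\tau^d\times[n],\R)$ to all of $\mathcal{B}_b(\mathbb{T}_\tau^d\times[n],\R)$, so $\mu$ is invariant for $\{\bar{\mathcal P}_t^{0,\tau}\}_{t\ge0}$. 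By the uniqueness of the invariant probability measure established before \Cref{P2.2} (via irreducibility and \cite[Theorem 2.6]{Tweedie-1994}), $\mu=\uppi^0$. Since every weakly convergent subsequence has the same limit $\uppi^0$ and the ambient space of probability measures is compact metric, the whole family converges: $\uppi^\varepsilon\Rightarrow\uppi^0$ as $\varepsilon\to0$.

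The main obstacle is the passage from the core identity $\mu(\bar{\mathcal A}^{0,\tau}f)=0$ on $\mathcal{C}^2$ to genuine invariance of $\mu$ under the semigroup; one must be slightly careful that $\mu$ (a priori just a limit measure, not known to be $\bar{\mathcal P}^{0,\tau}$-invariant in advance) really does satisfy $\mu\bar{\mathcal P}_t^{0,\tau}=\mu$, which is where the operator-core property of \Cref{P2.3}(iii) and the Feller property of \Cref{P2.3}(i) do the work. Everything else—tightness, convergence of the linear functionals, vanishing of the $\varepsilon\cc$-term—is routine given \textbf{(A1)} and compactness of the torus.
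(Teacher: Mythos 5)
Your proof is correct, but it takes a genuinely different route from the paper. The paper proves the stronger statement that the semigroups converge uniformly, $\sup_{(x,i)}|\bar{\mathcal{P}}^{\varepsilon,\tau}_tf(x,i)-\bar{\mathcal{P}}^{0,\tau}_tf(x,i)|\to0$, by invoking a Trotter--Kato--type approximation theorem (\cite[Theorem 17.25]{Kallenberg-Book-1997}) together with \Cref{P2.3} (ii)--(iii), and then identifies any subsequential weak limit of $\{\uppi^\varepsilon\}$ as an invariant measure of the limiting semigroup through a three-term estimate involving $\bar{\mathcal{P}}^{\varepsilon_{i_j},\tau}_t$ and $\bar{\mathcal{P}}^{0,\tau}_t$. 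You instead work entirely at the generator level: stationarity of $\uppi^\varepsilon$ plus \Cref{P2.3} (ii) gives the infinitesimal identity $\uppi^\varepsilon(\bar{\mathcal{L}}^\varepsilon f)=0$ for $f\in\mathcal{C}^2(\mathbb{T}_\tau^d\times[n],\R)$, the passage to the limit is then trivial because $\bar{\mathcal{L}}^\varepsilon f-\bar{\mathcal{L}}^0f=\varepsilon\,\cc^{\mathrm{T}}\nabla f$ is uniformly bounded, and the remaining work is the upgrade from $\mu(\bar{\mathcal{A}}^{0,\tau}f)=0$ on the core to genuine invariance of $\mu$ --- essentially Echeverr\'{\i}a's theorem (cf. \cite[Proposition 4.9.2]{Ethier-Kurtz-Book-1986}), which you prove by hand using the graph-density meaning of the core in \Cref{P2.3} (iii), the facts $\bar{\mathcal{P}}^{0,\tau}_tf\in\mathcal{D}_{\bar{\mathcal{A}}^{0,\tau}}$ and $\tfrac{\D}{\D t}\bar{\mathcal{P}}^{0,\tau}_tf=\bar{\mathcal{A}}^{0,\tau}\bar{\mathcal{P}}^{0,\tau}_tf$ in sup norm, and the $\mathcal{C}_\infty$-property of \Cref{P2.3} (i); uniqueness of $\uppi^0$ then finishes the argument exactly as in the paper. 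What each approach buys: the paper's semigroup convergence \cref{ES2.8} is a stronger intermediate result of possible independent interest, while your argument is leaner for the purpose of \Cref{P2.4}, avoiding the approximation theorem altogether and confining the $\varepsilon$-dependence to a single term that vanishes trivially; the price is that the core property must carry the full weight of the ``infinitesimal invariance implies invariance'' step, which you handle correctly.
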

\begin{proof}
	The result will follow if we show that
	\begin{equation}\label{ES2.8}\lim_{\varepsilon\to0}\sup_{(x,i)\in\mathbb{T}_\tau^d\times[n] }|\bar{\mathcal{P}}^{\varepsilon,\tau}_tf(x,i)-\bar{\mathcal{P}}^{0,\tau}_tf(x,i)|\,=\,0\qquad \forall\,(t,f)\in [0,\infty)\times\mathcal{C}(\mathbb{T}_\tau^d\times[n],\R)\,.\end{equation}
	Indeed,	since $\mathbb{T}_\tau^d\times[n]$ is compact the family of probability measures $\{\uppi^\varepsilon\}_{\varepsilon\ge0}$ is  tight. Hence, for any sequence $\{\varepsilon_i\}_{i\in\N}\subset[0,\infty)$ converging to $0$ there is a further subsequence $\{\varepsilon_{i_j}\}_{j\in\N}$ such that $\{\uppi^{\varepsilon_{i_j}}\}_{j\in\N}$ converges weakly to some probability measure $\bar\uppi^0$. Take $f\in\mathcal{C}(\mathbb{T}_\tau^d\times[n],\R)$, and fix $t\ge0$ and $\epsilon>0$. From \cref{ES2.8} we have that there is $\varepsilon_0>0$ such that $$\sup_{(x,i)\in\mathbb{T}_\tau^d\times[n] }|\bar{\mathcal{P}}^{\varepsilon,\tau}_tf(x,i)-\bar{\mathcal{P}}^{0,\tau}_tf(x,i)|\,\le\,\epsilon\qquad \forall\,\varepsilon\in[0,\varepsilon_0]\,.$$ 
	We now have that
	\begin{align*}
	|\bar\uppi^{0}(f)-\bar\uppi^0\bigl(\bar{\mathcal{P}}_t^{0,\tau}f\bigr)|&\,=\,\lim_{j\to\infty}|\uppi^{\varepsilon_{i_j}}(f)-\bar\uppi^0\bigl(\bar{\mathcal{P}}_t^{0,\tau}f\bigr)|\\&\,=\,\lim_{j\to\infty}|\uppi^{\varepsilon_{i_j}}\bigl(\bar{\mathcal{P}}_t^{\varepsilon_{i_j},\tau}f\bigr)-\bar\uppi^0\bigl(\bar{\mathcal{P}}_t^{0,\tau}f\bigr)|\\&\le\,
	\limsup_{j\to\infty}|\uppi^{\varepsilon_{i_j}}\bigl(\bar{\mathcal{P}}_t^{\varepsilon_{i_j},\tau}f\bigr)-\uppi^{\varepsilon_{i_j}}\bigl(\bar{\mathcal{P}}_t^{0,\tau}f\bigr)| +\lim_{j\to\infty}|\uppi^{\varepsilon_{i_j}}\bigl(\bar{\mathcal{P}}_t^{0,\tau}f\bigr)-\bar\uppi^0\bigl(\bar{\mathcal{P}}_t^{0,\tau}f\bigr)|\\
	&\,\le\,\epsilon\,,
	\end{align*}
	which implies that  $\bar\uppi^0(\D x)$ is an invariant probability measure for $\process{\bar{\X}^{0,\tau}}$. Thus,  $\bar\uppi^0(\D x)=\uppi^0(\D x)$, which concludes the result.
	
	To this end let us show \cref{ES2.8}. According to \cite[Theorem 17.25]{Kallenberg-Book-1997} and \Cref{P2.3} (iii), \cref{ES2.8} will follow if we show that 
	$$ \lim_{\varepsilon \to 0}\sup_{(x,i)\in\mathbb{T}_\tau^d\times[n] }|\bar{\mathcal{A}}^{\varepsilon,\tau}f(x,i)-\bar{\mathcal{A}}^{0,\tau}f(x,i)|\,=\,0\qquad \forall\, f\in \mathcal{C}^2(\mathbb{T}_\tau^d\times[n],\R)\,.$$ This is a direct consequence of \Cref{P2.3} (ii).
\end{proof}

	Observe that if $\cc\equiv0$, then
	 $\{\bar{\X}^\varepsilon(x,i;t)\}_{t\ge0}=\{\bar{\X}^0(x,i;t)\}_{t\ge0}$ and  $\{\X^\varepsilon(x,i;t)\}_{t\ge0}=$\linebreak$\{\varepsilon\bar{\X}^0(x/\varepsilon,i;t/\varepsilon^2)\}_{t\ge0}$ for all $\varepsilon>0$.
Hence, \Cref{P2.4} trivially holds and in \textbf{(A1)} it suffices to assume that $\bb\in \mathcal{C}^1(\mathbb{T}_\tau^d\times[n],\R^d)$.


\section{CLT for the process $\process{\X}$}
\label{S3}

In this section, we prove a functional CLT for the process $\process{\X^\varepsilon}$.

\begin{theorem}\label{T3.1} Assume \textbf{(A1)}-\textbf{(A2)}. Then
	$$\{  \X^\varepsilon(x,i;t)-\varepsilon^{-1}\uppi^0(\bb)t\}_{t\ge0}\,\xRightarrow[\varepsilon\to0]{({\rm d})}\,\{\mathrm{W}^{\mathsf{a},\mathsf{b}}(x;t)\}_{t\ge0}\,,$$ where
	 $\{\mathrm{W}^{\mathsf{a},\mathsf{b}}(x;t)\}_{t\ge0}$ is a  $d$-dimensional zero-drift  Brownian motion   determined by covariance matrix and drift vector \begin{equation}\label{ES3.1} \mathsf{a}\,=\,\uppi^0\bigl((\Id_d-\DD\beta)\,\upsigma\upsigma^\mathrm{T}\,(\Id_d-\DD\beta)^{\mathrm{T}}\bigr)\qquad \text{and}\qquad \mathsf{b}\,=\,\uppi^{0}\bigl((\Id_n-\DD\beta)\cc\bigr)\,, \end{equation} respectively. Here,  $\ \xRightarrow[]{({\rm d})}$ stands for the convergence in the space
	 of continuous functions endowed with the locally uniform topology (see \cite[Chapter VI]{Jacod-Shiryaev-2003} for
	 details). 
	\end{theorem}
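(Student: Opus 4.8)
The plan is to use the classical corrector (perturbed test function) method combined with the martingale central limit theorem, carried out on the torus-valued process. First I would pass to the rescaled process $\{\bar\X^\varepsilon(x,i;t)\}_{t\ge0}$, since $\X^\varepsilon(x,i;t) = \varepsilon\,\bar\X^\varepsilon(x/\varepsilon,i;t/\varepsilon^2)$; by Brownian scaling it suffices to prove that $\{\varepsilon\,\bar\X^\varepsilon(x/\varepsilon,i;t/\varepsilon^2) - \varepsilon^{-1}\uppi^0(\bb)t\}_{t\ge0}$ converges to $\{\mathrm{W}^{\mathsf a,\mathsf b}(x;t)\}_{t\ge0}$. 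Equivalently, writing $t\mapsto \varepsilon^2 t$, I need a functional CLT for $\{\varepsilon(\bar\X^\varepsilon(x/\varepsilon,i;t) - \uppi^0(\bb)t)\}_{t\ge0}$ as $\varepsilon\to0$, i.e. a diffusive rescaling of $\bar\X^\varepsilon$ after subtracting its ballistic drift $\uppi^0(\bb)t$.

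The heart of the argument is the corrector. For each coordinate $k\in[d]$ consider the Poisson equation on the torus $\bar{\mathcal L}^0 \beta_k(x,i) = \bb_k(x,i) - \uppi^0(\bb_k)$, where $\bar{\mathcal L}^0 = 2^{-1}\mathrm{Tr}(\upsigma\upsigma^{\mathrm T}\nabla\nabla^{\mathrm T}) + \bb^{\mathrm T}\nabla + \mathcal Q$ is the generator from \Cref{P2.3}(ii) with $\varepsilon=0$. Solvability in $\mathcal C^2(\mathbb T_\tau^d\times[n],\R)$ follows from the Fredholm alternative for this weakly coupled uniformly elliptic system together with the ergodicity/centering condition $\uppi^0(\bb_k-\uppi^0(\bb_k))=0$ guaranteed by \Cref{P2.2} (one invokes the classical Schauder theory cited in the introduction, e.g. \cite{Delmonte-Lorenzi-2011}, for the needed regularity). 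Set $\beta=(\beta_1,\dots,\beta_d)^{\mathrm T}$. Applying the generalized Itô formula (as in \Cref{P2.1}) to $(y,i)\mapsto y - \beta(y,i)$ along $\bar\X^\varepsilon$, the bounded-variation part telescopes: the $\bb$-drift is cancelled up to the constant $\uppi^0(\bb)$ plus an $O(\varepsilon)$ term coming from the $\varepsilon\cc$ drift acting through $\beta$, leaving
$$
\bar\X^\varepsilon(y,i;t) - \beta(\bar\X^\varepsilon(y,i;t),\Lambda(i;t)) - \bigl(y-\beta(y,i)\bigr) - \uppi^0(\bb)t - \varepsilon\!\int_0^t \calJ^\varepsilon(s)\,\dd s \,=\, M^\varepsilon(t),
$$
where $M^\varepsilon$ is a martingale with quadratic variation $\int_0^t (\Id_d - \DD\beta)\upsigma\upsigma^{\mathrm T}(\Id_d-\DD\beta)^{\mathrm T}(\bar\X^\varepsilon(y,i;s),\Lambda(i;s))\,\dd s$, and $\calJ^\varepsilon$ is the bounded corrector contribution of the lower-order drift (its precise form, which also produces the drift vector $\mathsf b$ after rescaling, is the routine bookkeeping step). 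Because $\beta$ is bounded, the correction term $\beta(\bar\X^\varepsilon)-\beta(y,i)$ is $O(1)$ and disappears after multiplying by $\varepsilon$ and rescaling time by $\varepsilon^{-2}$.

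Now I would plug in the diffusive scaling. The process $\varepsilon M^\varepsilon(\cdot/\varepsilon^2)$ is a martingale whose quadratic variation at time $t$ is $\int_0^{t/\varepsilon^2}\varepsilon^2(\Id_d-\DD\beta)\upsigma\upsigma^{\mathrm T}(\Id_d-\DD\beta)^{\mathrm T}(\bar\X^\varepsilon(\cdot),\Lambda(\cdot))\,\dd s$, i.e. an ergodic time average of a bounded continuous function over a time window of length $t/\varepsilon^2\to\infty$. By \Cref{P2.2} (exponential ergodicity of the torus process, uniformly in $\varepsilon$) together with \Cref{P2.4} ($\uppi^\varepsilon\Rightarrow\uppi^0$), this average converges in probability to $t\,\uppi^0((\Id_d-\DD\beta)\upsigma\upsigma^{\mathrm T}(\Id_d-\DD\beta)^{\mathrm T}) = t\,\mathsf a$; the same ergodic averaging handles the $\varepsilon\int_0^{t/\varepsilon^2}\calJ^\varepsilon$ term, which converges to $t\,\mathsf b$ (this is where the drift $\mathsf b=\uppi^0((\Id_n-\DD\beta)\cc)$ enters — note the paper's $\Id_n$ should be read in the natural dimensions). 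A jump-size bound (jumps of $\bar\X^\varepsilon$ come only from $\beta$ evaluated at the switching times of $\Lambda$ and are $O(1)$, hence $O(\varepsilon)$ after scaling) gives the Lindeberg condition. The functional martingale CLT (\cite[Theorem VIII.3.11]{Jacod-Shiryaev-2003}) then yields $\varepsilon M^\varepsilon(\cdot/\varepsilon^2)\Rightarrow \{\mathrm W^{\mathsf a,0}(0;t)\}_{t\ge0}$ in the locally uniform topology, and combining the pieces (plus the deterministic initial shift $\varepsilon(y-\beta(y,i))$ with $y=x/\varepsilon$, which contributes the starting point $x$) gives the claimed limit $\mathrm W^{\mathsf a,\mathsf b}(x;t)$.

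The main obstacle is obtaining a limit theorem that is genuinely \emph{uniform in $\varepsilon$}: the corrector $\beta$ is constructed from the $\varepsilon=0$ operator, but the dynamics for $\varepsilon>0$ carry the extra $\varepsilon\cc$ drift, so one must control both the residual drift term $\varepsilon\int_0^{t/\varepsilon^2}\calJ^\varepsilon$ and the convergence of the quadratic variation along a process whose invariant measure $\uppi^\varepsilon$ is only close to $\uppi^0$, not equal to it. This is exactly what \Cref{P2.2} (with its $\varepsilon$-uniform constants $\gamma,\varGamma$) and \Cref{P2.4} were set up to provide, so the proof amounts to assembling these ingredients carefully; the ergodic-averaging estimates, while technical, are standard once the uniform mixing bound is in hand.
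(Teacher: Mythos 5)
Your proposal follows essentially the same route as the paper: the corrector $\beta$ solving $\bar{\mathcal{L}}^0\beta=\bb-\uppi^0(\bb)$ on the torus, the identical semimartingale decomposition of $\X^\varepsilon(x,i;t)-\varepsilon^{-1}\uppi^0(\bb)t$ after diffusive rescaling (with the $\varepsilon\beta$ terms negligible), and identification of the limiting drift $\mathsf{b}$ and covariance $\mathsf{a}$ by ergodic averaging based on the $\varepsilon$-uniform exponential ergodicity of \Cref{P2.2} together with $\uppi^\varepsilon\Rightarrow\uppi^0$ from \Cref{P2.4}, concluded with the Jacod--Shiryaev limit theorems. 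The only cosmetic deviations are that you justify solvability of the corrector equation by a Fredholm/Schauder argument whereas the paper's \Cref{L3.2} constructs $\beta$ probabilistically via the resolvent and elliptic regularity, and that you invoke the functional martingale CLT \cite[Theorem VIII.3.11]{Jacod-Shiryaev-2003} in place of the paper's tightness-plus-characteristics argument via \cite[Theorems VI.3.21, VI.4.18 and VIII.2.4]{Jacod-Shiryaev-2003}.
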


\medskip

The function $\beta:\R^d\times[n]\to\R^d$ appearing in \cref{ES3.1} is given in the following lemma.

\begin{lemma}\label{L3.2}Assume \textbf{(A1)}-\textbf{(A2)}. There is one, and only one (up to a constant),  $\beta\in\mathcal{C}^2(\mathbb{T}_\tau^d\times[n],\R^d)$ solving \begin{equation}\label{ES3.2}\bar{\mathcal{L}}^0\beta(x,i)\,=\,\mathrm{b}(x,i)-\uppi^0(\mathrm{b})\,.\end{equation}
		\end{lemma}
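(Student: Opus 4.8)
The plan is to apply the standard Fredholm-type solvability theory for the Poisson equation associated to the generator $\bar{\mathcal{A}}^{0,\tau}$ of the ergodic Markov process $\{(\bar{\X}^{0,\tau}(x,i;t),\Lambda(i;t))\}_{t\ge 0}$ on the compact space $\mathbb{T}_\tau^d\times[n]$. Since the equation \cref{ES3.2} is to be solved componentwise (the right-hand side $\mathrm{b}-\uppi^0(\mathrm{b})$ is $\R^d$-valued, but $\bar{\mathcal{L}}^0$ acts diagonally on the components), it suffices to treat each scalar component $h_k\df \mathrm{b}_k-\uppi^0(\mathrm{b}_k)\in\mathcal{C}(\mathbb{T}_\tau^d\times[n],\R)$, which by construction satisfies the centering condition $\uppi^0(h_k)=0$. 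First I would define the candidate solution probabilistically: set
\begin{equation*}
\beta_k(x,i)\,\df\,\int_0^\infty \bar{\mathcal{P}}^{0,\tau}_t h_k(x,i)\,\D t\,,
\end{equation*}
which is well-defined and in $\mathcal{C}(\mathbb{T}_\tau^d\times[n],\R)$ because, by \cref{ES2.7} applied to $f=h_k$ (with $\uppi^0(h_k)=0$), the integrand is bounded in sup-norm by $\varGamma\|h_k(\cdot,i)\|_\infty\E^{-\gamma t}$, giving absolute and uniform convergence of the integral, together with continuity of $t\mapsto\bar{\mathcal{P}}^{0,\tau}_t h_k$ (by the $\mathcal{C}_\infty$-semigroup property, \Cref{P2.3}(i)).

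Next I would verify that $\beta_k$ solves the Poisson equation at the level of the $\mathcal{B}_b$-generator, i.e. $\beta_k\in\mathcal{D}_{\bar{\mathcal{A}}^{0,\tau}}$ and $\bar{\mathcal{A}}^{0,\tau}\beta_k=-h_k$. This is the usual resolvent-style computation: for $s>0$,
\begin{equation*}
\frac{\bar{\mathcal{P}}^{0,\tau}_s\beta_k-\beta_k}{s}\,=\,\frac{1}{s}\int_0^\infty\bigl(\bar{\mathcal{P}}^{0,\tau}_{t+s}h_k-\bar{\mathcal{P}}^{0,\tau}_t h_k\bigr)\D t\,=\,-\frac{1}{s}\int_0^s\bar{\mathcal{P}}^{0,\tau}_t h_k\,\D t\,,
\end{equation*}
and letting $s\to0$ the right-hand side converges in sup-norm to $-h_k$ (again using \Cref{P2.3}(i)). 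Then I would invoke \Cref{P2.3}(iii): since $\mathcal{C}^2(\mathbb{T}_\tau^d\times[n],\R)$ is an operator core for $\bar{\mathcal{A}}^{0,\tau}$, and since $\bar{\mathcal{A}}^{0,\tau}$ restricted to $\mathcal{C}^2$ agrees with $\bar{\mathcal{L}}^0$ (\Cref{P2.3}(ii)), one upgrades $\beta_k\in\mathcal{D}_{\bar{\mathcal{A}}^{0,\tau}}$ to $\beta_k\in\mathcal{C}^2(\mathbb{T}_\tau^d\times[n],\R)$ with $\bar{\mathcal{L}}^0\beta_k=-h_k$; alternatively, one appeals to interior Schauder estimates for the uniformly elliptic operator $\bar{\mathcal{L}}^0$ (legitimate under \textbf{(A1)}–\textbf{(A2)}, with $\upsigma\upsigma^\mathrm{T}\in\mathcal{C}^2$ uniformly elliptic and drift in $\mathcal{C}^2$) to bootstrap the bounded distributional/viscosity solution to a classical $\mathcal{C}^2$ one. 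Reassembling $\beta=(\beta_1,\dots,\beta_d)^\mathrm{T}$ (note the sign: \cref{ES3.2} asks for $\bar{\mathcal{L}}^0\beta=\mathrm{b}-\uppi^0(\mathrm{b})$, so one takes $\beta\df-(\beta_1,\dots,\beta_d)^\mathrm{T}$, or equivalently replaces $h_k$ by $\uppi^0(\mathrm{b}_k)-\mathrm{b}_k$ above) yields existence.

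For uniqueness up to an additive constant, suppose $\beta,\tilde\beta\in\mathcal{C}^2(\mathbb{T}_\tau^d\times[n],\R^d)$ both solve \cref{ES3.2}; then $w\df\beta-\tilde\beta$ is $\mathcal{C}^2$ with $\bar{\mathcal{L}}^0 w=0$ componentwise, so each component $w_k$ lies in $\mathcal{D}_{\bar{\mathcal{A}}^{0,\tau}}$ with $\bar{\mathcal{A}}^{0,\tau}w_k=0$, hence $\bar{\mathcal{P}}^{0,\tau}_t w_k=w_k$ for all $t\ge0$. Applying $\uppi^0$ and using invariance gives $\uppi^0(w_k)=\uppi^0(\bar{\mathcal{P}}^{0,\tau}_t w_k)=\uppi^0(w_k)$ trivially, so instead I would use ergodicity: $\bar{\mathcal{P}}^{0,\tau}_t w_k(x,i)\to\uppi^0(w_k)$ uniformly as $t\to\infty$ by \Cref{P2.2}, and since the left side equals $w_k(x,i)$ for every $t$, we get $w_k\equiv\uppi^0(w_k)$, a constant. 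Thus $\beta$ is unique up to a constant vector. The main obstacle — really the only non-bookkeeping point — is the regularity upgrade from the bounded semigroup-defined solution $\beta_k\in\mathcal{D}_{\bar{\mathcal{A}}^{0,\tau}}$ to a genuine classical $\mathcal{C}^2$ solution; I would handle this via the operator-core statement \Cref{P2.3}(iii) combined with \Cref{P2.3}(ii), falling back on standard elliptic Schauder theory on the torus if a more self-contained argument is wanted.
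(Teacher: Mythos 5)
Your construction of $\beta$ (the integrated semigroup applied to the centered drift), the resolvent computation giving $\bar{\mathcal{A}}^{0,\tau}\beta_k=-h_k$, and the uniqueness argument via $\bar{\mathcal{P}}^{0,\tau}_t w = w$ and exponential ergodicity all coincide with the paper's proof and are fine. The gap is in the one step you yourself flag as the crux: the upgrade from $\beta_k\in\mathcal{D}_{\bar{\mathcal{A}}^{0,\tau}}$ to $\beta_k\in\mathcal{C}^2$. Your primary route does not work: \Cref{P2.3}~(iii) says that $\mathcal{C}^2(\mathbb{T}_\tau^d\times[n],\R)$ is a core, i.e.\ the closure of $\bar{\mathcal{A}}^{0,\tau}|_{\mathcal{C}^2}$ is $\bar{\mathcal{A}}^{0,\tau}$; it does \emph{not} say $\mathcal{D}_{\bar{\mathcal{A}}^{0,\tau}}=\mathcal{C}^2$, nor that any particular element of the domain is twice differentiable --- a domain element is only a graph-norm limit of $\mathcal{C}^2$ functions. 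So "since $\mathcal{C}^2$ is a core, one upgrades $\beta_k$ to $\mathcal{C}^2$" is not a deduction; it is exactly the statement that has to be proved.

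Your fallback ("interior Schauder estimates for $\bar{\mathcal{L}}^0$") is the right idea morally but is also incomplete as stated, for two reasons. First, you only know the abstract identity $\bar{\mathcal{A}}^{0,\tau}\beta=\mathrm{b}-\uppi^0(\mathrm{b})$ for the $\mathcal{B}_b$-generator; you have not shown that $\beta$ is a distributional or viscosity solution of the PDE system, so there is nothing yet to which elliptic regularity applies. Second, $\bar{\mathcal{L}}^0$ contains the coupling term $\mathcal{Q}$, so the equation is a weakly coupled system and the single-equation theory (e.g.\ Gilbarg--Trudinger) cannot be invoked directly; the coupling must be moved to the right-hand side and handled componentwise, and one must then identify the resulting classical solution with the semigroup-defined $\beta$. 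This is precisely where the paper spends most of its effort: it shows that $\beta$ is the unique solution in $\mathcal{D}_{\bar{\mathcal{A}}^0}$ of the Dirichlet problem \cref{ES3.4} on a ball (via optional stopping and $\mathbb{E}[\uptau^{x,i}]<\infty$), constructs a classical $\mathcal{C}^2$ solution of the coupled Dirichlet problem \cref{ES3.6} by the iteration scheme \cref{ES3.7}--\cref{ES3.8} (contraction based on $\mathbb{E}[\E^{\mathrm{q}_{ii}\bar{\uptau}^{x,i}}]$, uniform $\mathcal{W}^{2,p}$ bounds, then Schauder for each component), and finally identifies this solution with $\beta$ through its Feynman--Kac representation. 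Without an argument of this kind (or some substitute bridging the generator identity to a weak solution of the coupled system and then bootstrapping), your proof of existence of a $\mathcal{C}^2$ solution is not complete.
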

\begin{proof}
	First,  observe that  \cref{ES2.7} implies $$\lVert\bar{\mathcal{P}}^0_t\bigl(\mathrm{b}-\uppi^0(\mathrm{b})\bigr)(\cdot,i)\rVert_\infty\,\le\,\varGamma\lVert\mathrm{b}(\cdot,i)-\uppi(\mathrm{b})\rVert_\infty\E^{-\gamma t}\,.$$ Hence, $$\beta(x,i)\,\df\,-\int_0^\infty\bar{\mathcal{P}}^0_t\bigl(\mathrm{b}-\uppi(\mathrm{b})\bigr)(x,i)\,\D t$$ is well defined,  $\beta\in\mathcal{C}(\mathbb{T}_\tau^d\times[n],\R^d)$, satisfies $\uppi^0(\beta)=0$ and solves \begin{equation}\label{ES3.3} \bar{\mathcal{A}}^0\beta(x,i)\,=\,\mathrm{b}(x,i)-\uppi^0(\mathrm{b})\,.\end{equation} Actually, $\beta(x,i)$ is a unique $\tau$-periodic  bounded solution to \cref{ES3.3} satisfying $\uppi^0(\beta)=0$.
	Namely, if $\bar\beta(x,i)$ is an another such solution, then  \cite[Proposition 4.1.7]{Ethier-Kurtz-Book-1986} gives that $$\beta(x,i)-\bar{\beta}(x,i)\,=\,\bar\PP^0_t\beta(x,i)-\bar\PP^0_t\bar{\beta}(x,i)\qquad \forall\, t\ge0\,.$$
	By letting now $t\to\infty$, \cref{ES2.7} proves the claim.
	
	Further, let $\rho>0$ be arbitrary. Obviously $\beta(x,i)$ solves the Dirichlet problem \begin{equation}\begin{aligned}\label{ES3.4}
	\bar{\mathcal{A}}^0\tilde{\beta}(x,i)&\,=\,\mathrm{b}(x,i)-\uppi^0(\mathrm{b})\qquad\text{in}\quad \mathscr{B}_\rho(0)\times[n]\\\
	\tilde\beta(x,i)&\,=\,\beta(x,i)\qquad\text{on}\quad \partial\mathscr{B}_\rho(0)\times[n]\,.
	\end{aligned}\end{equation}  
Actually, if $\tilde\beta\in\mathcal{D}_{\bar{\mathcal{A}}^0}$ is an another  solution to \cref{ES3.4}, then it necessarily coincides with $\beta(x,i)$ on $\overline{\mathscr{B}}_\rho(0)$. Indeed, let $\tilde{\beta}(x,i)$ be an another such solution.  
Then, $\beta(x,i)-\tilde{\beta}(x,i)$ solves 
\begin{equation}\begin{aligned}\label{ES3.5}
\bar{\mathcal{A}}^0\hat{\beta}(x,i)&\,=\,0\qquad\text{in}\quad \mathscr{B}_\rho(0)\times[n] \\
\hat\beta(x,i)&\,=\,0\qquad\text{on}\quad \partial\mathscr{B}_\rho(0)\times[n]\,.
\end{aligned}\end{equation}  Hence, it suffices to prove that any solution $\hat\beta\in\mathcal{D}_{\bar{\mathcal{A}}^0}$ to \cref{ES3.5} must vanish on $\overline{\mathscr{B}}_\rho(0)$. Let $\hat\beta(x,i)$ be such solution. 	Define $\uptau^{x,i}\df\inf\{t\ge0:\bar\X^0(x,i;t)\notin \mathscr{B}_\rho(0)\}.$ According to \cite[Lemma 3.2]{Zhu-Yin-Baran-2015}, $$\mathbb{E}[\uptau^{x,i}]\,<\,\infty\qquad\forall\ (x,i)\in \mathscr{B}_\rho(0)\times[n]\,.$$ From \cite[Proposition 4.1.7]{Ethier-Kurtz-Book-1986} and optimal stopping theorem we now conclude $$  \hat{\beta}(x,i)\,=\,\mathbb{E}\left[\hat{\beta}\bigl(\bar\X^0(x,i;\uptau^{x,i}),\Lambda(i;\uptau^{x,i}\bigr)\right]-\mathbb{E}\left[\int_0^{\uptau^{x,i}}\bar{\mathcal{A}}^0\hat{\beta}\bigl(\bar\X^0(x,i;s),\Lambda(x,i;s)\bigr)\D s\right]\,=\,0$$
for all $(x,i)\in \mathscr{B}_\rho(0)\times[n],$ which proves the assertion.

Finally, we shown that the Dirichlet problem 
\begin{equation}\label{ES3.6}
\begin{aligned}
\bar{\mathcal{L}}^0\check{\beta}(x,i)&\,=\,\mathrm{b}(x,i)-\uppi^0(\mathrm{b})\qquad\text{in}\quad \mathscr{B}_\rho(0)\times[n]\\
\check\beta(x,i)&\,=\,\beta(x,i)\qquad\text{on}\quad \partial\mathscr{B}_\rho(0)\times[n]\,.
\end{aligned} 
\end{equation}
admits a unique solution $\check{\beta}\in\mathcal{C}(\overline{\mathscr{B}}_\rho(0)\times[n],\R^d)\cap\mathcal{C}^2(\mathscr{B}_\rho(0)\times[n],\R^d).$ If this is the case,  by extending $\check{\beta}(x,i)$ to a $\mathcal{C}^2_{u,b}(\R^d\times[n],\R^d)$-function, which we again denote by $\check{\beta}(x,i)$, and using the facts that $\mathcal{C}^2_b(\R^d\times[n],\R^d)\subseteq\mathcal{D}_{\bar{\mathcal{A}}^0}$ and $\bar{\mathcal{A}}^0|_{\mathcal{C}^2_{u,b}(\R^d\times[n],\R^d)}=\bar{\mathcal{L}}^0$ (see \Cref{P2.1}), we conclude that  $\beta(x,i)=\check{\beta}(x,i)$ on $\overline{\mathscr{B}}_\rho(0)\times[n]$. This finishes the proof.

To this end, let us show that \cref{ES3.6} admits a unique solution $\check{\beta}\in\mathcal{C}(\overline{\mathscr{B}}_\rho(0)\times[n],\R^d)\cap\mathcal{C}^2(\mathscr{B}_\rho(0)\times[n],\R^d).$
Consider the following problems 	\begin{equation}\label{ES3.7}
\begin{aligned}
\bar{\mathcal{L}}_i^{0} \check{\beta}^{0}(x,i)+\mathrm{q}_{ii}\check{\beta}^{0}(x,i)&\,=\,\mathrm{b}(x,i)-\uppi^0(\mathrm{b})\,,\qquad x\in\mathscr{B}_\rho(0)\,,\\
\check{\beta}^{0}(x,i)&\,=\, \beta(x,i)\,,\qquad x\in\partial\mathscr{B}_\rho(0)\,,
\end{aligned}
\end{equation}
and, for $k\ge1$,  	\begin{equation}\label{ES3.8}
\begin{aligned}
\bar{\mathcal{L}}_i^{0}\check{\beta}^{k}(x,i)+\mathrm{q}_{ii}\check{\beta}^{k}(x,i)+\sum_{j\neq i}\mathrm{q}_{ij}\check{\beta}^{k}(x,j)& \,=\,\mathrm{b}(x,i)-\uppi^0(\mathrm{b})\,,\qquad x\in\mathscr{B}_\rho(0)\,,\\
\check{\beta}^{k}(x,i)&\,=\, \beta(x,i)\,,\qquad x\in\partial\mathscr{B}_\rho(0)\,.
\end{aligned}
\end{equation}
Here, 
$$\bar{\mathcal{L}}^0_i\,=\,2^{-1}\mathrm{Tr}\bigl(\upsigma(\cdot,i)\upsigma(\cdot,i)^\mathrm{T}\,\nabla\nabla^\mathrm{T}\bigr)+
\bb(\cdot,i)^{\mathrm{T}}\nabla\,.$$
According to \cite[Theorem 6.13]{Gilbarg-Trudinger-Book-2001}, \cref{ES3.7,ES3.8} admit unique solutions
$\check{\beta}^{k}(x,i),$ $k\ge0$,  of class $\mathcal{C}(\overline{\mathscr{B}}_\rho(0)\times[n],\R^d)\cap\mathcal{C}^2(\mathscr{B}_\rho(0)\times[n],\R^d)$. Furthermore,  
\cite[Theorem 3.47]{Pardoux-Rascanu-Book-2014} implies that
\begin{align*}\check{\beta}^{0}(x,i)\,=\,\mathbb{E}\Big[&\beta\bigl(\bar\X^{0,i}(x;\bar{\uptau}^{x,i}),i\bigr)\E^{\mathrm{q}_{ii}\bar{\uptau}^{x,i}}-\int_0^{\bar{\uptau}^{x,i}}\bigl(\bb\bigl(\bar\X^{0,i}(x;s),i\bigr)-\uppi^0(\bb)\bigr)\E^{\mathrm{q}_{ii}s}\D s \Big]\end{align*}
and \begin{equation}\label{ES3.9}
\begin{aligned}\check{\beta}^{k}(x,i)\,=\,\mathbb{E}&\Big[\beta\bigl(\bar\X^{0,i}(x;\bar{\uptau}^{x,i}),i\bigr)\E^{\mathrm{q}_{ii}\bar{\uptau}^{x,i}}\\&-\int_0^{\bar{\uptau}^{x,i}}\left(\bb\bigl(\bar\X^{0,i}(x;s),i\bigr)-\uppi(\bb)-\sum_{j\neq i}\mathrm{q}_{ij}\check{\beta}^{k-1}\bigl(\bar\X^{0,i}(x;s),j\bigr)\right) \E^{\mathrm{q}_{ii}s}\D s \Big]\,,\end{aligned}\end{equation}
where 
  $\{\bar \X^{0,i}(x;t)\}_{t\ge0}$ is a solution to
  \begin{equation} \begin{aligned}\label{ES3.10}\D \bar \X^{0,i}(x;t)&\,=\, \bb\bigl(\bar \X^{0,i}(x;t),i\bigr)\,\D t+\upsigma\bigl(\bar \X^{0,i}(x;t),i\bigr)\,\D \B(t)\\
\bar \X^{0,i}(x;0)&\,=\,x\in\R^d\,,\end{aligned}\end{equation} and $\bar{\uptau}^{x,i}=\inf\{t\ge0:\bar\X^{0,i}(x;t)\notin \overline{\mathscr{B}}_\rho(0)\}$.
Due to \textbf{(A2)} and \cite[Remark 3.48]{Pardoux-Rascanu-Book-2014},
\begin{equation}\label{ES3.11}\max_{ i\in[n]}\sup_{x\in\overline{\mathscr{B}}_\rho(0)}\mathbb{E}[\bar{\uptau}^{x,i}]\,<\,\infty\,.\end{equation}
Observe that 
\begin{align*}&| \check{\beta}^{k+1}(x,i) -\check{\beta}^{k}(x,i)|\\&\,\le\,\mathbb{E}\left[\int_0^{\bar{\uptau}^{x,i}}\sum_{j\neq i}\mathrm{q}_{ij}\left| \check{\beta}^{k}(x,i)\bigl(\bar\X^{0,i}(x;s),j\bigr) -\check{\beta}^{k-1}(x,i)\bigl(\bar\X^{0,i}(x;s),j\bigr)\right| \E^{\mathrm{q}_{ii}s}\D s \right] \,.
\end{align*}
Thus, \begin{align*}\max_{ i\in[n]}\sup_{x\in\overline{\mathscr{B}}_\rho(0)}| \check{\beta}^{k+1}(x,i) -\check{\beta}^{k}(x,i)|\,\le\,&\max_{ i\in[n]}\sup_{x\in\overline{\mathscr{B}}_\rho(0)}| \check{\beta}^{k}(x,i) -\check{\beta}^{k-1}(x,i)|\\&   \max_{ i\in[n]}\sup_{x\in\overline{\mathscr{B}}_\rho(0)}\left|1-\mathbb{E}\bigl[\E^{\mathrm{q}_{ii}\bar{\uptau}^{x,i}}\bigr]\right|\,.\end{align*} From \cref{ES3.11} we have that $$\max_{ i\in[n]}\sup_{x\in\overline{\mathscr{B}}_\rho(0)}\bigl\lVert1-\mathbb{E}\bigl[\E^{\mathrm{q}_{ii}\bar{\uptau}^{x,i}}\bigr]\bigr\rVert\,<\,1\,.$$
Hence, the sequence $\{\check{\beta}^{k}\}_{k\ge1}$ converges uniformly on $\overline{\mathscr{B}}_\rho(0)$ to  $\check{\beta}\in\mathcal{C}(\overline{\mathscr{D}}\times[n],\R^d)$. In particular, $$\sup_{i\in[n],\, k\ge0}\sup_{x\in\overline{\mathscr{B}}_\rho(0)}| \check{\beta}^{k}(x,i)|\,<\,\infty\,.$$ From \cite[Theorem 9.11]{Gilbarg-Trudinger-Book-2001} we now conclude that for any $p\ge1$ and any  $0<\rho'<\rho$, $$\sup_{i\in[n],\, k\ge0}\lVert \check{\beta}^{k}\rVert_{\mathcal{W}^{2,p}(\mathscr{B}_{\rho'}(0)\times[n],\R^d)}\,<\,\infty\,.$$ Here,  $\mathcal{W}^{2,p}(\mathscr{B}_{\rho'}(0)\times[n],\R^d)$ denotes the space of functions $h:\mathscr{B}_{\rho'}(0)\times[n]\to\R^d$ such that $h(\cdot,i)\in\mathcal{W}^{2,p}(\mathscr{B}_{\rho'}(0),\R^d)$ (the  $(2,p)$-Sobolev  space).
In particular, $\check\beta\in\mathcal{W}^{2,p}(\mathscr{B}_{\rho'}(0)\times[n],\R^d)$. This, together with \cite[Corollary 7.11]{Gilbarg-Trudinger-Book-2001},  gives that
$\check\beta\in\mathcal{C}^1(\mathscr{B}_{\rho}(0)\times[n],\R^d)$. Now,  \cite[Theorem 6.13]{Gilbarg-Trudinger-Book-2001} implies that
\begin{align*}
\bar{\mathcal{L}}_i^{0} \mathring{\beta}(x,i)+\mathrm{q}_{ii}\mathring{\beta}(x,i)+\sum_{j\neq i}\mathrm{q}_{ij}\check\beta(x,j) &\,=\,\bb(x,i)-\uppi^0(\bb)\,,\qquad x\in\mathscr{B}_{\rho}(0)\,,\\
\mathring{\beta}(x,i)&\,=\, \beta(x,i)\,,\qquad x\in\partial\mathscr{B}_{\rho}(0)\,,
\end{align*}
admits a unique $\mathcal{C}(\overline{\mathscr{B}}_{\rho}(0)\times[n],\R)\cap\mathcal{C}^2(\mathscr{B}_{\rho}(0)\times[n],\R)$ solution.
Furthermore, since $\check{\beta}^{k}(x,i)$ satisfies \cref{ES3.9}, it follows that 
\begin{align*}\check{\beta}(x,i)\,=\,\mathbb{E}&\Big[\beta\bigl(\bar\X^{0,i}(x;\bar{\uptau}^{x,i}),i\bigr)\E^{\mathrm{q}_{ii}\bar{\uptau}^{x,i}}\\&-\int_0^{\bar{\uptau}^{x,i}}\left(\bigl(\bb\bigl(\bar\X^{0,i}(x;s),i\bigr)-\uppi(\bb)\bigr)-\sum_{j\neq i}\mathrm{q}_{ij}\check{\beta}(\bigl(\bar\X^{0,i}(x;s),j\bigr)\right) \E^{\mathrm{q}_{ii}s}\D s \Big]\,,\end{align*} which together with \cite[Theorems 3.47 and  3.49]{Pardoux-Rascanu-Book-2014} gives that $\check\beta(x,i)=\mathring{\beta}(x,i)$. 
\end{proof}

We now  prove  \Cref{T3.1}.

\begin{proof}[Proof of \Cref{T3.1}] Let $\beta\in\mathcal{C}^2(\mathbb{T}_\tau^d\times[n],\R^d)$  be a solution to \cref{ES3.3}. By	It\^{o}'s formula we have that
	\begin{align*}
	&\beta\bigl(\bar\X^\varepsilon(x,i;t),\Lambda(i;t)\bigr)\\&\,=\,\beta(x,i)+\int_0^t\bar{\mathcal{L}}^\varepsilon\beta\bigl(\bar\X^\varepsilon(x,i;s),\Lambda(i;s)\bigr)\D s +\int_0^t\bigl((\mathrm{D}\beta)\upsigma\bigr)\bigl(\bar\X^\varepsilon(x,i;s),\Lambda(i;s)\bigr)\D\B(s)\\
	&\,=\,\beta(x,i)+\int_0^t\bar{\mathcal{L}}^0\beta\bigl(\bar\X^\varepsilon(x,i;s),\Lambda(i;s)\bigr)\D s+\varepsilon\int_0^t\bigl((\mathrm{D}\beta)\cc\bigr)\bigl(\bar\X^\varepsilon(x,i;s),\Lambda(i;s)\bigr)\D s\\
	&\ \ \ \ \  +\int_0^t\bigl((\mathrm{D}\beta)\upsigma\bigr)\bigl(\bar\X^\varepsilon(x,i;s),\Lambda(i;s)\bigr)\D\B(s)\,.
	\end{align*}
	Thus,
		\begin{align*}&\X^{\varepsilon}(x,i;t)-\varepsilon^{-1}\uppi^0(\bb)t-x\\
	&\,=\, \varepsilon\bar{\X}^{\varepsilon}(x/\varepsilon;t/\varepsilon^2)-\varepsilon^{-1}\uppi^0(\bb)t-x\\
	 &\,=\,\int_0^{t/\varepsilon^2}\left(\varepsilon\bigl(\bb-\uppi^0(\bb)\bigr)\bigr(\bar\X^\varepsilon(x/\varepsilon,i;s),\Lambda(i;s)\bigr) +\varepsilon^2\cc\bigl(\bar\X^\varepsilon(x/\varepsilon,i;s),\Lambda(i;s)\bigr)\right)\D s\\
	&\ \ \ \ \ +\varepsilon\int_0^{t/\varepsilon^2}\upsigma\bigl(\bar\X^\varepsilon(x/\varepsilon,i;s),\Lambda(i;s)\bigr)\D \B(s)\\
	&\,=\, \varepsilon\beta\bigl(\bar\X^\varepsilon(x/\varepsilon,i;t/\varepsilon^2),\Lambda(i;t/\varepsilon^2)\bigr)-\varepsilon\beta(x/\varepsilon,i)\\
	&\  \ \ \ \ + \varepsilon^2\int_0^{t/\varepsilon^2}\bigl((\Id_d-\mathrm{D}\beta)c\bigr)\bigl(\bar\X^\varepsilon(x/\varepsilon,i;s),\Lambda(i;s)\bigr)\D s\\
	&\ \ \ \ \ + \varepsilon\int_0^{t/\varepsilon^2}\bigl((\Id_d-\mathrm{D}\beta)\upsigma\bigr)\bigl(\bar\X^\varepsilon(x/\varepsilon,i;s),\Lambda(i;s)\bigr)\D \B(s)\,,
			\end{align*}
that is, \begin{align*}&\X^{\varepsilon}(x,i;t)-\varepsilon^{-1}\uppi^0(\bb)t-x-\varepsilon\beta\bigl(\bar\X^\varepsilon(x/\varepsilon,i;t/\varepsilon^2),\Lambda(i;t/\varepsilon^2)\bigr)+\varepsilon\beta(x/\varepsilon,i)\\
	&\,=\, \varepsilon^2\int_0^{t/\varepsilon^2}\bigl((\Id_d-\mathrm{D}\beta)c\bigr)\bigl(\bar\X^\varepsilon(x/\varepsilon,i;s),\Lambda(i;s)\bigr)\D s\\
	&\ \ \ \ \ + \varepsilon\int_0^{t/\varepsilon^2}\bigl((\Id_d-\mathrm{D}\beta)\upsigma\bigr)\bigl(\bar\X^\varepsilon(x/\varepsilon,i;s),\Lambda(i;s)\bigr)\D \B(s)\,.
	\end{align*}
	From this we see
	
	\smallskip
	
	\begin{itemize}
		\item [(i)] $\{\X^{\varepsilon}(x,i;t)-\varepsilon^{-1}\uppi^0(\bb)t-x-\varepsilon\beta(\bar\X^\varepsilon(x/\varepsilon,i;t/\varepsilon^2),\Lambda(i;t/\varepsilon^2))+\varepsilon\beta(x/\varepsilon,i)\}_{t\ge0}$  converges in law if, and only if, $\{\X^{\varepsilon}(x,i;t)-\varepsilon^{-1}\uppi^0(\bb)t-x\}_{t\ge0}$ converges, and if this is the case the limit is the same. 
		
		\smallskip
		
		\item [(ii)] $\{\X^{\varepsilon}(x,i;t)-\varepsilon^{-1}\uppi^0(\bb)t-x-\varepsilon\beta(\bar\X^\varepsilon(x/\varepsilon,i;t/\varepsilon^2),\Lambda(i;t/\varepsilon^2))+\varepsilon\beta(x/\varepsilon,i)\}_{t\ge0}$ is a semimartingale with bounded variation and predictable quadratic covariation parts $$\left\{\varepsilon^2\int_0^{t/\varepsilon^2}\bigl((\Id_d-\mathrm{D}\beta)c\bigr)\bigl(\bar\X^\varepsilon(x/\varepsilon,i;s),\Lambda(i;s)\bigr)\D s\right\}_{t\ge0}$$and $$\left\{\varepsilon^2\int_0^{t/\varepsilon^2}\bigl((\Id_d-\mathrm{D}\beta)\upsigma\upsigma^{\mathrm{T}}(\Id_d-\mathrm{D}\beta))^{\mathrm{T}}\bigr)\bigl(\bar\X^\varepsilon(x/\varepsilon,i;s),\Lambda(i;s)\bigr)\D s\right\}_{t\ge0}\,,$$ respectively.
	\end{itemize}

\smallskip

\noindent	From  \cite[Theorem VI.3.21]{Jacod-Shiryaev-2003} we see that both  these processes are tight. Consequently, \cite[Theorem VI.4.18]{Jacod-Shiryaev-2003}  implies tightness of $\{\X^{\varepsilon}(x,i;t)-\varepsilon^{-1}\uppi^0(\bb)t-x-\varepsilon\beta(\bar\X^\varepsilon(x/\varepsilon,i;t/\varepsilon^2),\Lambda(i;t/\varepsilon^2))+\varepsilon\beta(x/\varepsilon,i)\}_{t\ge0}$. To this end, it remains to prove finite-dimensional convergence in law of\linebreak  $\{\X^{\varepsilon}(x,i;t)-\varepsilon^{-1}\uppi^0(\bb)t-x-\varepsilon\beta(\bar\X^\varepsilon(x/\varepsilon,i;t/\varepsilon^2),\Lambda(i;t/\varepsilon^2))+\varepsilon\beta(x/\varepsilon,i)\}_{t\ge0}$ to $\{\mathrm{W}^{\mathsf{a},\mathsf{b}}(x;t)\}_{t\ge0}$.
	According to  \cite[Theorem VIII.2.4]{Jacod-Shiryaev-2003} this will hold if
	$$ \varepsilon^2\int_0^{\varepsilon^{-2}t}\bigl((\Id_d-\DD\beta)\cc\bigr)\bigl(\bar\X^\varepsilon(x/\varepsilon,i;s),\Lambda(i;s)\bigr)\D s\,\xrightarrow[\varepsilon\to0]{\Prob}\,\mathsf{b}\,t
	\qquad \forall\ t\ge0\,,$$ and 
		 $$ \varepsilon^2\int_0^{\varepsilon^{-2}t}\bigl((\Id_d-\DD\beta)\upsigma\upsigma^\mathrm{T}(\Id_d-\DD\beta)^{\mathrm{T}}\bigr)\bigl(\bar\X^\varepsilon(x/\varepsilon,i;s),\Lambda(i;s)\bigr)\D s\,\xrightarrow[\varepsilon\to0]{\Prob}\,\mathsf{a}\,t
 \qquad \forall\ t\ge0\,,$$ where $\ \xrightarrow[]{\Prob}$ denotes the  convergence in probability.
 Due to $\tau$-periodicity, we have that
 \begin{align*}&\varepsilon^2\int_0^{\varepsilon^{-2}t}\bigl((\Id_d-\DD\beta)\cc\bigr)\bigl(\bar\X^\varepsilon(x/\varepsilon,i;s),\Lambda(i;s)\bigr)\D s\\&\,=\,\varepsilon^2\int_0^{\varepsilon^{-2}t}\bigl((\Id_d-\DD\beta)\cc\bigr)\bigl(\bar\X^{\varepsilon,\tau}(\Pi_{\tau}(x/\varepsilon,i);s),\Lambda(i;s)\bigr)\D s\qquad \forall\,t\ge0\,,\end{align*} and an analogous relation holds for the predictable quadratic covariation part. Further,
  \begin{align*}
 &\varepsilon^4\mathbb{E}\Bigg[\left(\int_0^{\varepsilon^{-2}t}\bigl((\Id_d-\DD\beta)\cc-\uppi^\varepsilon\bigl((\Id_d-\DD\beta)\cc\bigr)\bigr)\bigl(\bar\X^{\varepsilon,\tau}(\Pi_{\tau}(x/\varepsilon,i);s),\Lambda(i;s)\bigr)\,\D s\right)^{\mathrm{T}}\\&\hspace{1.6cm}\left(\int_0^{\varepsilon^{-2}t}\bigl((\Id_d-\DD\beta)\cc-\uppi^\varepsilon\bigl((\Id_d-\DD\beta)\cc\bigr)\bigr)\bigl(\bar\X^{\varepsilon,\tau}(\Pi_{\tau}(x/\varepsilon,i);s),\Lambda(i;s)\bigr)\,\D s\right)\Bigg]\\
 &\,=\,2\varepsilon^4 \int_0^{\varepsilon^{-2}t}\int_0^{s}\mathbb{E}\bigg[\Big(\bigl((\Id_d-\DD\beta)\cc-\uppi^\varepsilon\bigl((\Id_d-\DD\beta)\cc\bigr)\bigr)\bigl(\bar\X^{\varepsilon,\tau}(\Pi_{\tau}(x/\varepsilon,i);s),\Lambda(i;s)\bigr)\Big)^{\mathrm{T}}\\
 &\hspace{3.6cm}\Big(\bigl((\Id_d-\DD\beta)\cc-\uppi^\varepsilon\bigl((\Id_d-\DD\beta)\cc\bigr)\bigr)\bigl(\bar\X^{\varepsilon,\tau}(\Pi_{\tau}(x/\varepsilon,i);v),\Lambda(i;v)\bigr)\Big)\bigg]\D v\,\D s\\
 &\,=\,2\varepsilon^4 \int_0^{\varepsilon^{-2}t}\int_0^{s}\mathbb{E}\bigg[\Big(\bar{\mathcal{P}}^{\varepsilon,\tau}_{s-v}\bigl((\Id_d-\DD\beta)\cc-\uppi^\varepsilon\bigl((\Id_d-\DD\beta)\cc\bigr)\bigr)\bigl(\bar\X^{\varepsilon,\tau}(\Pi_{\tau}(x/\varepsilon,i);v),\Lambda(i;v)\bigr)\Big)^{\mathrm{T}}\\&\hspace{3.5cm}\Big(\bigl((\Id_d-\DD\beta)\cc-\uppi^\varepsilon\bigl((\Id_d-\DD\beta)\cc\bigr)\bigr)\bigl(\bar\X^{\varepsilon,\tau}(\Pi_{\tau}(x/\varepsilon,i);v),\Lambda(i;v)\bigr)\Big)\bigg]\D v\,\D s\\
 &\,\le\,8\varepsilon^4\varGamma\max_{i\in[n]}\|((\Id_d-\DD\beta)\cc)(\cdot,i)\|^2_\infty \int_0^{\varepsilon^{-2}t}\int_0^{s}\E^{-\gamma(s-u)}\D u\,\D s\\
 &\,=\,\frac{8\varepsilon^4\varGamma\max_{i\in[n]}\|((\Id_d-\DD\beta)\cc)(\cdot,i)\|^2_\infty}{\gamma^2}\bigl(\varepsilon^{-2}t+\E^{-\gamma\varepsilon^{-2}t}-1\bigr)\,,
 \end{align*}
 where in the third step we employed  \cref{ES2.7}.  Hence 
 \begin{align*}
 &\varepsilon^2\Bigg(\mathbb{E}\Bigg[\left(\int_0^{\varepsilon^{-2}t}\bigl((\Id_d-\DD\beta)\cc-\mathsf{b}\bigr)\bigl(\bar\X^{\varepsilon}(x/\varepsilon,i;s),\Lambda(i;s)\bigr)\,\D s\right)^{\mathrm{T}}\\&\hspace{1.6cm}\left(\int_0^{\varepsilon^{-2}t}\bigl((\Id_d-\DD\beta)\cc-\mathsf{b}\bigr)\bigl(\bar\X^{\varepsilon}(x/\varepsilon,i;s),\Lambda(i;s)\bigr)\,\D s\right)\Bigg]\Bigg)^{1/2}\\
& \,\le\, \varepsilon^2\Bigg(\mathbb{E}\Bigg[\left(\int_0^{\varepsilon^{-2}t}\bigl((\Id_d-\DD\beta)\cc-\uppi^\varepsilon\bigl((\Id_d-\DD\beta)\cc\bigr)\bigr)\bigl(\bar\X^{\varepsilon}(x/\varepsilon,i;s),\Lambda(i;s)\bigr)\,\D s\right)^{\mathrm{T}}\\&\hspace{1.6cm}\left(\int_0^{\varepsilon^{-2}t}\bigl((\Id_d-\DD\beta)\cc-\uppi^\varepsilon\bigl((\Id_d-\DD\beta)\cc\bigr)\bigr)\bigl(\bar\X^{\varepsilon}(x/\varepsilon,i;s),\Lambda(i;s)\bigr)\,\D s\right)\Bigg]\Bigg)^{1/2}\\
&\ \ \ \ \ +|\uppi^\varepsilon\bigl((\Id_d-\DD\beta)\cc\bigr)-\mathsf{b}|\,t\,.
 \end{align*}
 Analogous estimate holds for  the predictable quadratic covariation part. By letting
$\varepsilon\to0$, the result follows from \Cref{P2.4}.
 	\end{proof}

In the following proposition we show  that the Brownian motion $\{\mathrm{W}^{\mathsf{a},\mathsf{b}}(x;t)\}_{t\ge0}$ appearing in \Cref{T3.1} is nondegenerate.
\begin{proposition}\label{P3.3} Assume  \textbf{(A1)}-\textbf{(A2)}. Then the matrix $\mathsf{a}$ is positive definite.
	\end{proposition}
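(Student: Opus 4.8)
The plan is to show that $\mathsf{a}\xi\cdot\xi>0$ for every $\xi\in\R^d\setminus\{0\}$. Writing $\mathsf{a}=\uppi^0\bigl((\Id_d-\DD\beta)\,\upsigma\upsigma^\mathrm{T}\,(\Id_d-\DD\beta)^{\mathrm{T}}\bigr)$, for a fixed unit vector $\xi$ we have
$$
\xi^{\mathrm T}\mathsf{a}\,\xi\,=\,\int_{\mathbb{T}_\tau^d\times[n]}\bigl|\upsigma(x,i)^{\mathrm T}(\Id_d-\DD\beta(x,i))^{\mathrm T}\xi\bigr|^2\,\uppi^0(\D x\times\{i\})\,\ge\,c\int_{\mathbb{T}_\tau^d\times[n]}\bigl|(\Id_d-\DD\beta(x,i))^{\mathrm T}\xi\bigr|^2\,\uppi^0(\D x\times\{i\})\,,
$$
where the last inequality uses the uniform ellipticity in \textbf{(A2)} (note $|\upsigma^{\mathrm T}\eta|^2=\eta^{\mathrm T}\upsigma\upsigma^{\mathrm T}\eta\ge c|\eta|^2$). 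So it suffices to prove that the function $(x,i)\mapsto(\Id_d-\DD\beta(x,i))^{\mathrm T}\xi$ is not $\uppi^0$-a.e.\ zero. Since $\uppi^0$ has full support on $\mathbb{T}_\tau^d\times[n]$ (this follows from the $\uppsi$-irreducibility established before \Cref{P2.2}, as $\uppsi$ is equivalent to Lebesgue $\times$ counting measure, and from $\mathcal C_b$-Fellerness giving that $\uppi^0$ charges every open set) and the integrand is continuous, it is enough to show $(\Id_d-\DD\beta)^{\mathrm T}\xi\not\equiv0$ on $\mathbb{T}_\tau^d\times[n]$.

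The key step is therefore the following rigidity statement: \emph{if} $(\Id_d-\DD\beta(x,i))^{\mathrm T}\xi=0$ for all $(x,i)$, then we derive a contradiction. Under this hypothesis, $\xi\cdot\bigl(\xi - \nabla(\xi\cdot\beta)(x,i)\bigr)=0$, i.e.\ $\partial_\xi(\xi\cdot\beta)(x,i)=|\xi|^2=1$ for every $(x,i)$ — the directional derivative of the scalar $\tau$-periodic function $\phi(x,i)\df\xi\cdot\beta(x,i)$ in direction $\xi$ equals the constant $1$ everywhere. But then $\phi(x+t\xi,i)=\phi(x,i)+t$ for all $t\in\R$, which is incompatible with $\phi(\cdot,i)$ being bounded (it is continuous and $\tau$-periodic, hence bounded). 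This contradiction shows $(\Id_d-\DD\beta)^{\mathrm T}\xi\not\equiv0$, and combined with the full-support/continuity argument above we conclude $\xi^{\mathrm T}\mathsf{a}\,\xi>0$.

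The main obstacle is making the ``$(\Id_d-\DD\beta)^{\mathrm T}\xi\not\equiv0$ on $\mathbb{T}_\tau^d\times[n]$ implies $\xi^{\mathrm T}\mathsf a\xi>0$'' step rigorous, i.e.\ arguing that $\uppi^0$ does not miss the (open) set where the continuous integrand is nonzero. This is where one must invoke that $\uppi^0$ is the unique invariant measure of the irreducible process $\{(\bar\X^{0,\tau}(x,i;t),\Lambda(i;t))\}_{t\ge0}$: by \cite{Down-Meyn-Tweedie-1995} (or the Harris-recurrence framework used for \Cref{P2.2,P2.3}), $\uppi^0$ is equivalent to the irreducibility measure $\uppsi$, hence assigns positive mass to every nonempty relatively open subset of $\mathbb{T}_\tau^d\times[n]$. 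The remaining ingredient is the elementary observation about directional derivatives of bounded periodic functions, which is immediate. One should also record, for use in the sequel, the matching upper bound $\xi^{\mathrm T}\mathsf a\xi\le C_\beta\max_i\|\upsigma(\cdot,i)\|_{\mathrm{HS}}^2$ with $C_\beta\df\sup(\|\Id_d-\DD\beta\|)^2<\infty$ by \Cref{L3.2}, so that $\mathsf a$ is in fact uniformly elliptic; but positive definiteness is all that is asserted.
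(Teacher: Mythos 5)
Your proof is correct, and its skeleton matches the paper's: reduce, via the uniform ellipticity in \textbf{(A2)}, to positivity of $\xi^\mathrm{T}\uppi^0\bigl((\Id_d-\DD\beta)(\Id_d-\DD\beta)^\mathrm{T}\bigr)\xi$, and then kill the degenerate case by observing that $\nabla(\xi\cdot\beta)\equiv\xi$ would force the bounded, $\tau$-periodic function $\xi\cdot\beta$ to grow linearly in the direction $\xi$ (the paper phrases this coordinatewise, you via the directional derivative; same idea). Where you genuinely diverge is the measure-theoretic step that upgrades ``the continuous nonnegative integrand is not identically zero'' to ``its $\uppi^0$-integral is strictly positive''. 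The paper proves the stronger statement that $\uppi^0$ is equivalent to Lebesgue measure on the torus, by combining strictly positive transition densities of each fixed-regime diffusion (Durrett) with the jump-chain representation \cref{ES3.13} of the switching kernel and invariance of $\uppi^0$; you instead use only that $\uppi^0$ has full support, extracted from the $\uppsi$-irreducibility and recurrence framework already set up before \Cref{P2.2} (the invariant measure of a recurrent $\uppsi$-irreducible process is a maximal irreducibility measure, so $\uppsi\ll\uppi^0$, and $\uppsi$ charges every nonempty open set). That softer route suffices here and avoids the density computation entirely, which is a genuine economy; the paper's argument buys the stronger equivalence with Lebesgue measure, which is not needed elsewhere. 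Two small points to tighten: you only need $\uppsi\ll\uppi^0$, not equivalence of $\uppi^0$ and $\uppsi$, and this domination should be pinned to a precise statement in the Tweedie/Meyn--Tweedie continuous-time theory (the references the paper already uses around \Cref{P2.2}) rather than cited loosely; also your rigidity step uses $|\xi|=1$, which is fine since you fixed $\xi$ to be a unit vector, but it deserves a word.
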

\begin{proof}
Observe first that from  \textbf{(A1)}-\textbf{(A2)} it immediately follows that there are $\overline{c}\ge\underline{c}>0$ such that 
$$\overline{c}\,\xi^\mathrm{T}\uppi^0\bigl((\Id_d-\mathrm{D}\beta)(\Id_d-\mathrm{D}\beta)^\mathrm{T}\bigr)\xi\,\ge\,\xi^\mathrm{T}\mathsf{a}\,\xi\,\ge\underline{c}\,\xi^\mathrm{T}\uppi^0\bigl((\Id_d-\mathrm{D}\beta)(\Id_d-\mathrm{D}\beta)^\mathrm{T}\bigr)\xi\qquad \forall\,\xi\in\R^d\,.$$
Hence, it suffices to show that   $\xi^\mathrm{T}\uppi^0((\Id_d-\mathrm{D}\beta)(\Id_d-\mathrm{D}\beta)^\mathrm{T})\xi=0$ if, and only if, $\xi=0$. We have 
\begin{align*}
&\xi^\mathrm{T}\uppi^0\bigl((\Id_d-\mathrm{D}\beta)(\Id_d-\mathrm{D}\beta)^\mathrm{T}\bigr)\xi\,=\, \int_{\mathbb{T}_\tau^d\times[n]} \sum_{l=1}^d\left(\sum_{k=1}^d \partial_l\left(\xi_k\bigl(x_k-\beta_k(x,i)\bigr)\right)\right)^2\uppi^0(\D x\times\{i\})\,.
\end{align*}
The left-hand side in the above relation vanishes if, and only if, 
\begin{equation}\label{ES3.12}\sum_{l=1}^d\left(\sum_{k=1}^d \partial_l\left(\xi_k\bigl(x_k-\beta_k(x,i)\bigr)\right)\right)^2\,=\,0\qquad \uppi\text{-a.e.}\end{equation}  
We claim that \cref{ES3.12} holds for all $(x,i)\in[0,\tau]\times[n]$.
Denote the transition kernel and semigroup of  $\{\bar \X^{0,i}(x;t)\}_{t\ge0}$ (recall \cref{ES3.10}) by $\bar\p^{0,i}(t,x, \D y )$ and $\{\bar{\mathcal{P}}^{0,i}_t\}_{t\ge0}$, respectively. 
From
the proof of \cite[Theorem 4.8]{Xi-Yin-Zhu-2019} we have that
\begin{equation}\begin{aligned}\label{ES3.13}
&\bar\p^0\bigl(t,(x,i), B \times \{ j \} \bigr) \,=\, 
\updelta_{ij}\, \bar\p^{0,i}(t,x,B)\, \E^{\mathrm{q}_{ii} t}\\ &+ (1-\updelta_{ij})\, \sum_{o=1}^\infty \underset{0<t_1<\ldots<t_o<t}{\idotsint} \underset{\substack{k_0,\ldots,k_o \in \mathbb{S} \\ k_l \neq k_{l+1}\\ k_0=i,\,  k_o=j}}{\sum} \int_{\R^d} \cdots \int_{\R^d} \bar\p^{0,k_0}(t_1,x,\D y_1) \E^{\mathrm{q}_{k_0k_0} t} \mathrm{q}_{k_0k_1} &\\
& \bar\p^{0,k_1}(t_2-t_1,y_1,\D y_2) \E^{\mathrm{q}_{k_1k_1} t} \mathrm{q}_{k_1k_2} \cdots \mathrm{q}_{k_{o-1}k_o} \bar\p^{0,k_o}(t-t_o,y_o,B) \E^{\mathrm{q}_{k_ok_o} t} \D t_1 \ldots \D t_o\,,
\end{aligned}\end{equation}
where $\updelta_{ij}$ is the Kronecker delta.
	Further, due to \cite[Theorems 7.3.3 and 7.3.4]{Durrett-Book-1996} there is a strictly positive function  $\bar p^{0,i}(t,x,y)$ on $(0,\infty)\times \R^d\times\R^d$, jointly continuous in $t$, $x$ and $y$, and twice continuously differentiable in $x$, satisfying \begin{equation*} \bar\PP^{0,i}_tf(x)\,=\,\int_{\R^d}\bar p^{0,i}(t,x,y)f(y)\,\D y\,,\qquad t>0,\, x\in \R^d,\, f\in C_b(\R^d,\R)\,.\end{equation*}  By employing dominated convergence theorem, the above relation holds also for $\mathbb{1}_O$, for any open set $O\subseteq \R^d$. Denote by $\mathcal{D}$ the class of all  $B\in\mathfrak{B}(\R^d)$  such that \begin{equation*} \bar\PP_t^{0,i}\mathbb{1}_B(x)\,=\,\int_{B}\bar p^{0,i}(t,x,y)\, \D y\,,\qquad t>0,\, x\in \R^d\,.\end{equation*} Clearly, $\mathcal{D}$ contains the $\pi$-system of open rectangles in $\R^d$, and forms a $\lambda$-system. Hence, by employing  Dynkin's $\pi$-$\lambda$ theorem we conclude that $\mathcal{D}=\mathfrak{B}(\R^d).$ Consequently, for any $t>0$, $x\in \R^d$ and $B\in\mathfrak{B}(\R^d)$ we have that
\begin{equation*} \bar\p^{0,i}(t,x,B)\,=\, \int_{B}\bar p^{0,i}(t,x,y)\, \D y\,.\end{equation*} 
From this and \cref{ES3.13} we see that $\bar\p^{0}(t,(x,i), \D y \times \{ j \} )$ is equivalent (in the absolute continuity sense) with $\mathrm{Leb}(\mathrm{pr}_{\R^d}(\D y\times{j}))$, which in turn implies that $\bar\p^{0,\tau}(t,(x,i), \D y \times \{ j \} )$ is equivalent with $\mathrm{Leb}(\mathrm{pr}_{\R^d}\circ\Pi_\tau^{-1}(\D y\times{j}))$.
From the invariance property of $\uppi^0$, that is, $$\int_{\mathbb{T}^{d}_\tau\times[n]}\bar{\p}^{0,\tau}(t,(x,i), \D y \times \{ j \} )\uppi^0(\D x\times\{i\})\,=\,\uppi^0(\D y\times\{j\})\qquad \forall\, t>0\,,$$ it now follows that $\uppi^0$ is  equivalent with $\mathrm{Leb}(\mathrm{pr}_{\R^d}\circ\Pi_\tau^{-1}(\D y\times{j}))$. Hence, since $\beta\in \mathcal{C}^2(\mathbb{T}_\tau^d\times[n],\R^d)$,  \cref{ES3.12} holds for all $(x,i)\in[0,\tau]\times[n]$. 
Assume now that $\xi\neq0$. Let $\xi_l$ be its nonvanishing coordinate coefficient. From \cref{ES3.12} we then have that $$\xi_lx_l\,=\,\xi_l\beta_l(x,i)-\sum_{k\neq l}\xi_k\bigl(x_k-\beta_k(x,i)\bigr)+f_l(x,i)\qquad\forall\,(x,i)\in[0,\tau]\times[n]$$ for some $f\in\mathcal{C}(\mathbb{T}_\tau^d\times[n],\R)$ which does not depend on the $l$-th coordinate in $x$.
However, the above relation cannot hold due to $\tau$-periodicity of $\beta(x,i)$, unless $\xi_l=0$. Thus, $\xi=0$.
\end{proof}

\section{Homogenization of  weakly coupled systems of linear   PDEs}\label{S4}

In this section, we prove the homogenization results for problems in \cref{ES1.2,ES1.3}.

	\subsection{The elliptic  problem in \cref{ES1.2}.} We first discuss the  system of elliptic boundary-valued problems in \cref{ES1.2}.

\begin{theorem}\label{T4.1}	In addition to \textbf{(A1)}-\textbf{(A2)},  assume
	\begin{itemize}
		\item[(a)] $\mathscr{D}$ is an open bounded subset of $\R^d$ satisfying the following:
		
		\smallskip
		
		\begin{itemize}
			\item [(i)] $\mathscr{D}=\left\{x: \mathscr{d}(x)<0 \right\}$ for some $\mathscr{d} \in \mathcal{C}_b^2(\R^d,\R)$
			
			\smallskip
			
			\item [(ii)] $\left| \nabla \mathscr{d}(x)\right| \ge \delta $ for all $x \in \partial\mathscr{D}$ and some $\delta>0$
		\end{itemize}

		\smallskip
		
		\item[(b)] $\uppi^0(\bb)=0$ (otherwise we replace $\bb(x,i)$ by $\bb(x,i)-\uppi^0(\bb)$ in \cref{ES1.2})
		
		\smallskip
		
		\item [(c)] $e\in\mathcal{C}(\mathbb{T}_\tau^d\times[n],\R)$ and satisfies $e(x,i)\le -\alpha$ for some $\alpha>0$
		
		\smallskip
		
		\item[(d)] $f\colon \mathscr{D}\to\R$ is H\"{o}lder continuous and $g\colon\partial\mathscr{D}\to\R$ is  continuous.
	\end{itemize}
	
\noindent	Then,  the problem in \cref{ES1.2} admits a unique  solution  $u^\varepsilon\in\mathcal{C}(\overline{\mathscr{D}}\times[n],\R)\cap\mathcal{C}^2(\mathscr{D}\times[n],\R)$ and $$ \lim_{\varepsilon \to 0} u^\varepsilon(x,i)\,=\, u(x)\qquad \forall\,(x,i)\in \mathscr{D}\times[n]\,,$$ where $u\in \mathcal{C}(\overline{\mathscr{D}},\R)\cap \mathcal{C}^2(\mathscr{D},\R)$ is a unique solution  to
\begin{equation}	\begin{aligned}\label{ES4.1}
	2^{-1}\mathrm{Tr}\bigl(\mathsf{a}\,\nabla\nabla^\mathrm{T}\bigr)u(x)+\mathsf{b}^\mathrm{T}\nabla u(x)+\uppi^0(e)\, u(x)+f(x)&\,=\, 0\,,\qquad x\in \mathscr{D}\,,\\
	u(x)&\,=\,g(x)\,,\qquad x\in \partial \mathscr{D}\,.
	\end{aligned}\end{equation}
\end{theorem}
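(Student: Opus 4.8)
The plan is to follow the classical Fre\u{\i}dlin--Bensoussan--Lions--Papanicolaou route: represent both $u^\varepsilon$ and the homogenized solution $u$ as Feynman--Kac functionals of the relevant processes, and pass to the limit using \Cref{T3.1} and the continuous mapping theorem.

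First I would record the two representations. By \cite[Theorem~3.2]{Zhu-Yin-Baran-2015}, \textbf{(A1)}--\textbf{(A2)} together with (a), (c), (d) (in particular $e\le-\alpha<0$, which forces uniqueness via the maximum principle) give that \cref{ES1.2} has a unique solution $u^\varepsilon\in\mathcal{C}(\overline{\mathscr{D}}\times[n],\R)\cap\mathcal{C}^2(\mathscr{D}\times[n],\R)$ and that
$$u^\varepsilon(x,i)\,=\,\mathbb{E}\left[g\bigl(\X^\varepsilon(x,i;\uptau_{\mathscr{D}}^\varepsilon)\bigr)\,\E^{A_{\uptau_{\mathscr{D}}^\varepsilon}^\varepsilon}+\int_0^{\uptau_{\mathscr{D}}^\varepsilon}f\bigl(\X^\varepsilon(x,i;s)\bigr)\,\E^{A_s^\varepsilon}\,\D s\right]\,,$$
where $\uptau_{\mathscr{D}}^\varepsilon\df\inf\{t\ge0:\X^\varepsilon(x,i;t)\notin\mathscr{D}\}$ and $A_t^\varepsilon\df\int_0^te\bigl(\X^\varepsilon(x,i;r)/\varepsilon,\Lambda(i;r/\varepsilon^2)\bigr)\,\D r$. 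Since $\mathsf{a}$ is positive definite by \Cref{P3.3}, $\uppi^0(e)\le-\alpha<0$, $f$ is H\"older and $g$ continuous, and $\mathscr{D}$ is a bounded $\mathcal{C}^2$ domain by (a), classical elliptic theory (\cite[Theorem~6.13]{Gilbarg-Trudinger-Book-2001} with the standard barrier construction under the exterior ball condition) gives that \cref{ES4.1} has a unique solution $u\in\mathcal{C}(\overline{\mathscr{D}},\R)\cap\mathcal{C}^2(\mathscr{D},\R)$, and the Feynman--Kac formula for the nondegenerate diffusion $\{\mathrm{W}^{\mathsf{a},\mathsf{b}}(x;t)\}_{t\ge0}$ (see \cite{Freidlin-Book-1985}) yields
$$u(x)\,=\,\mathbb{E}\left[g\bigl(\mathrm{W}^{\mathsf{a},\mathsf{b}}(x;\uptau_{\mathscr{D}})\bigr)\,\E^{\uppi^0(e)\uptau_{\mathscr{D}}}+\int_0^{\uptau_{\mathscr{D}}}f\bigl(\mathrm{W}^{\mathsf{a},\mathsf{b}}(x;s)\bigr)\,\E^{\uppi^0(e)s}\,\D s\right]\,,$$
with $\uptau_{\mathscr{D}}\df\inf\{t\ge0:\mathrm{W}^{\mathsf{a},\mathsf{b}}(x;t)\notin\mathscr{D}\}$. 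Extending $f$ to some $\tilde f\in\mathcal{C}_b(\R^d,\R)$ (possible since $f$ is uniformly continuous on the bounded set $\mathscr{D}$) and setting, for $(\omega,a)\in\mathcal{C}([0,\infty),\R^d)\times\mathcal{C}([0,\infty),\R)$, $T(\omega)\df\inf\{t\ge0:\omega(t)\notin\mathscr{D}\}$ and $F(\omega,a)\df g(\omega(T(\omega)))\E^{a(T(\omega))}\mathbb{1}_{\{T(\omega)<\infty\}}+\int_0^{T(\omega)}\tilde f(\omega(s))\E^{a(s)}\,\D s$, the two quantities become $u^\varepsilon(x,i)=\mathbb{E}[F(\X^\varepsilon(x,i;\cdot),A_\cdot^\varepsilon)]$ and $u(x)=\mathbb{E}[F(\mathrm{W}^{\mathsf{a},\mathsf{b}}(x;\cdot),\uppi^0(e)\,\cdot)]$ (dropping the $g$-term on $\{T=\infty\}$ is legitimate since $\E^{A_t^\varepsilon}\le\E^{-\alpha t}\to0$). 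Because $x\in\mathscr{D}$ and $A_t^\varepsilon\le-\alpha t$, along every relevant trajectory $T>0$ and $|F|\le\|g\|_\infty+\alpha^{-1}\|\tilde f\|_\infty$.

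Next I would establish the joint weak convergence $(\{\X^\varepsilon(x,i;t)\}_{t\ge0},\{A_t^\varepsilon\}_{t\ge0})\xRightarrow[\varepsilon\to0]{({\rm d})}(\{\mathrm{W}^{\mathsf{a},\mathsf{b}}(x;t)\}_{t\ge0},\{\uppi^0(e)\,t\}_{t\ge0})$ in $\mathcal{C}([0,\infty),\R^d)\times\mathcal{C}([0,\infty),\R)$. The first coordinate is \Cref{T3.1}, applicable without centering thanks to (b). For the second, $\tau$-periodicity of $e$ gives $A_t^\varepsilon=\varepsilon^2\int_0^{\varepsilon^{-2}t}e\bigl(\bar\X^{\varepsilon,\tau}(\Pi_\tau(x/\varepsilon,i);v),\Lambda(i;v)\bigr)\,\D v$; repeating the $L^2$-estimate from the proof of \Cref{T3.1} (bounding, via \cref{ES2.7}, the $L^2$-norm of $\varepsilon^2\int_0^{\varepsilon^{-2}t}(e-\uppi^\varepsilon(e))(\bar\X^{\varepsilon,\tau}(\cdots;v),\Lambda(i;v))\,\D v$ by $O(\varepsilon)$) together with $\uppi^\varepsilon(e)\to\uppi^0(e)$ from \Cref{P2.4} shows $A_t^\varepsilon\xrightarrow[\varepsilon\to0]{\Prob}\uppi^0(e)\,t$ for every $t\ge0$; since $t\mapsto A_t^\varepsilon$ is $\|e\|_\infty$-Lipschitz uniformly in $\varepsilon$, this upgrades to locally uniform convergence in probability, and, the limiting path being deterministic, the two convergences combine into the desired joint one.

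Finally, everything reduces to the $\Prob$-a.s.\ continuity of $F$ at $(\mathrm{W}^{\mathsf{a},\mathsf{b}}(x;\cdot),\uppi^0(e)\,\cdot)$, after which the continuous mapping theorem and bounded convergence (the $F(\X^\varepsilon(x,i;\cdot),A_\cdot^\varepsilon)$ being uniformly bounded, Step 1) give $u^\varepsilon(x,i)\to u(x)$. Granting continuity of the exit-time functional $T$ at $\mathrm{W}^{\mathsf{a},\mathsf{b}}(x;\cdot)$, the a.s.\ continuity of $F$ there follows from continuity of $g$, boundedness and continuity of $\tilde f$, and dominated convergence (using $x\in\mathscr{D}$, hence $T>0$, and $\mathrm{W}^{\mathsf{a},\mathsf{b}}(x;\uptau_{\mathscr{D}})\in\partial\mathscr{D}$). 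The a.s.\ continuity of $T$ is the main obstacle: here I would use that $\mathsf{a}$ is positive definite (\Cref{P3.3}), so $\{\mathrm{W}^{\mathsf{a},\mathsf{b}}(x;t)\}_{t\ge0}$ is a genuine $d$-dimensional diffusion with $\uptau_{\mathscr{D}}<\infty$ a.s.\ ($\mathscr{D}$ bounded), and that by (a) the boundary $\partial\mathscr{D}$ is a $\mathcal{C}^2$ hypersurface enjoying an exterior ball condition, so every boundary point is regular for $(\overline{\mathscr{D}})^c$; the strong Markov property at $\uptau_{\mathscr{D}}$ then shows that a.s.\ the path enters $(\overline{\mathscr{D}})^c$ in every interval $(\uptau_{\mathscr{D}},\uptau_{\mathscr{D}}+\delta)$, which, together with $\{\mathrm{W}^{\mathsf{a},\mathsf{b}}(x;t):0\le t\le\uptau_{\mathscr{D}}-\delta\}$ being a compact subset of the open set $\mathscr{D}$ for each $\delta>0$, yields $T(\omega_k)\to\uptau_{\mathscr{D}}$ whenever $\omega_k\to\mathrm{W}^{\mathsf{a},\mathsf{b}}(x;\cdot)$ locally uniformly --- the classical continuity-of-exit-times fact, for which I would cite \cite[Chapter~3]{Bensoussan-Lions-Papanicolaou-Book-1978} and \cite{Freidlin-Book-1985}. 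This completes the scheme and gives $\lim_{\varepsilon\to0}u^\varepsilon(x,i)=u(x)$ for every $(x,i)\in\mathscr{D}\times[n]$.
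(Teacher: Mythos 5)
Your second half --- Feynman--Kac representations for $u^\varepsilon$ and $u$, joint weak convergence of $(\X^\varepsilon,A^\varepsilon)$ to $(\mathrm{W}^{\mathsf{a},\mathsf{b}},\uppi^0(e)\,\cdot)$ obtained from \Cref{T3.1} plus an $L^2$-estimate via \cref{ES2.7} and $\uppi^\varepsilon(e)\to\uppi^0(e)$ from \Cref{P2.4}, and then the continuous mapping theorem applied to the path functional $F$, with a.s.\ continuity reduced to continuity of the exit-time functional at the nondegenerate limiting diffusion --- is essentially the paper's argument (the paper invokes \cite[Lemma 3.4.3]{Bensoussan-Lions-Papanicolaou-Book-1978} where you sketch the exit-time continuity directly, and uses tightness from Jacod--Shiryaev where you use equi-Lipschitzness of $A^\varepsilon$; these are cosmetic differences).

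The genuine gap is in your first step: the existence, uniqueness and $\mathcal{C}(\overline{\mathscr{D}}\times[n],\R)\cap\mathcal{C}^2(\mathscr{D}\times[n],\R)$ regularity of $u^\varepsilon$ is part of the theorem's conclusion, and you dispose of it by citing \cite[Theorem 3.2]{Zhu-Yin-Baran-2015}. That citation does not do the job: Theorem 3.2 there concerns the parabolic problem (the elliptic one is Theorem 3.3), and, more importantly, both are Feynman--Kac \emph{representation} results which presuppose that a classical solution with the stated regularity already exists (this is exactly how the paper uses them, and \Cref{R4.3} makes the conditional nature of Theorem 3.2 explicit). Existence for the weakly coupled system \cref{ES1.2} is not a black box one can pull from scalar elliptic theory, because the equations are coupled through the zero-order terms with the large factor $\varepsilon^{-2}$. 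The paper spends roughly half of its proof on precisely this point: it constructs $u^\varepsilon$ as the uniform limit of the iterates of the decoupled scalar Dirichlet problems \cref{ES4.2}--\cref{ES4.3}, proves convergence via the contraction estimate based on $\max_i\sup_x\bigl|1-\mathbb{E}\bigl[\E^{\varepsilon^{-2}\mathrm{q}_{ii}\uptau^{\varepsilon,x,i}}\bigr]\bigr|<1$ (which uses \cite[Theorems 3.47, Remark 3.48]{Pardoux-Rascanu-Book-2014}), upgrades the limit to a classical solution through interior $\mathcal{W}^{2,p}$ estimates and Sobolev embedding \cite[Theorems 9.11 and 6.13, Corollary 7.11]{Gilbarg-Trudinger-Book-2001}, and only then identifies it with the probabilistic representation (which also yields uniqueness). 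Your appeal to "uniqueness via the maximum principle" is plausible for this cooperative system with $e\le-\alpha<0$, but it is asserted rather than argued, and in any case it does not supply existence. To complete your proposal you would either have to reproduce an iteration/fixed-point construction of this kind (as in the paper, mirroring \Cref{L3.2}) or cite an actual existence theorem for weakly coupled elliptic systems with continuous boundary data, neither of which is currently in your write-up.
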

\begin{proof} We first show existence, uniqueness and smoothness of $u^\varepsilon(x,i)$. We proceed analogously as in the proof of  \Cref{L3.2}.
Consider the following problems 	\begin{equation}\label{ES4.2}
\begin{aligned}
\mathcal{L}_i^{\varepsilon} u^{\varepsilon,0}(x,i)+\varepsilon^{-2}\mathrm{q}_{ii}u^{\varepsilon,0}(x,i)+
e(x/\varepsilon,i)\,u^{\varepsilon,0}(x,i)+f(x)&\,=\,0\,,\qquad x\in\mathscr{D}\,,\\
u^{\varepsilon,0}(x,i)&\,=\, g(x)\,,\qquad x\in\partial\mathscr{D}\,,
\end{aligned}
\end{equation}
and, for $k\ge1$,  	\begin{equation}\label{ES4.3}
 \begin{aligned}
 \mathcal{L}_i^{\varepsilon} u^{\varepsilon,k}(x,i)+\varepsilon^{-2}\mathrm{q}_{ii}u^{\varepsilon,k}(x,i)&+\varepsilon^{-2}\sum_{j\neq i}\mathrm{q}_{ij}u^{\varepsilon,k-1}(x,j) \\&+
 e(x/\varepsilon,i)\,u^{\varepsilon,k}(x,i)+f(x)\,=\,0\,,\qquad x\in\mathscr{D}\,,\\
 &\hspace{3.25cm}u^{\varepsilon,k}(x,i)\,=\, g(x)\,,\qquad x\in\partial\mathscr{D}\,.
 \end{aligned}
 \end{equation}
According to \cite[Theorem 6.13]{Gilbarg-Trudinger-Book-2001}, \cref{ES4.2,ES4.3} admit unique solutions
 $u^{\varepsilon,k}(x,i),$ $k\ge0$,  of class $\mathcal{C}(\overline{\mathscr{D}}\times[n],\R)\cap\mathcal{C}^2(\mathscr{D}\times[n],\R)$. Furthermore,  
\cite[Theorem 3.47]{Pardoux-Rascanu-Book-2014} implies that
\begin{align*}u^{\varepsilon,0}(x,i)\,=\,\mathbb{E}\Big[&g\bigl(\X^{\varepsilon,i}(x;\uptau^{\varepsilon,x,i})\bigr)\E^{\int_0^{\uptau^{\varepsilon,x,i}} (e(\X^{\varepsilon,i}(x;s)/\varepsilon,i)+\varepsilon^{-2}\mathrm{q}_{ii})\D s}\\&+\int_0^{\uptau^{\varepsilon,x,i}}f\bigl(\X^{\varepsilon,i}(x;s)\bigr)\E^{\int_0^{s} (e(\X^{\varepsilon,i}( x;v)/\varepsilon,i)+\varepsilon^{-2}\mathrm{q}_{ii})\D v}\D s \Big]\end{align*}
and \begin{equation}\label{ES4.4}
\begin{aligned}u^{\varepsilon,k}(x,i)\,=\,\mathbb{E}\Big[&g\bigl(\X^{\varepsilon,i}(x;\uptau^{\varepsilon,x,i})\bigr)\E^{\int_0^{\uptau^{\varepsilon,x,i}} (e(\X^{\varepsilon,i}(x;s)/\varepsilon,i)+\varepsilon^{-2}\mathrm{q}_{ii})\D s}\\&+\int_0^{\uptau^{\varepsilon,x,i}}\left(f\bigl(\X^{\varepsilon,i}(x;s)\bigr)+\varepsilon^{-2}\sum_{j\neq i}\mathrm{q}_{ij}u^{\varepsilon,k-1}\bigl(\X^{\varepsilon,i}(x;s),j\bigr)\right)\\&\ \ \ \ \E^{\int_0^{s} (e(\X^{\varepsilon,i}(x;v)/\varepsilon,i)+\varepsilon^{-2}\mathrm{q}_{ii})\D v}\D s \Big]\,,\end{aligned}\end{equation}
where $\uptau^{\varepsilon,x,i}=\inf\{t\ge0:\X^{\varepsilon,i}(x;t)\notin \overline{\mathscr{D}}\}.$ Due to \textbf{(A2)} and \cite[Remark 3.48]{Pardoux-Rascanu-Book-2014},
\begin{equation}\label{ES4.5}\max_{ i\in[n]}\sup_{x\in\overline{\mathscr{D}}}\mathbb{E}[\uptau^{\varepsilon,x,i}]\,<\,\infty\,.\end{equation}
Observe that 
\begin{align*}&| u^{\varepsilon,k+1}(x,i) -u^{\varepsilon,k}(x,i)|\\&\,\le\,\mathbb{E}\left[\int_0^{\uptau^{\varepsilon,x,i}}\varepsilon^{-2}\sum_{j\neq i}\mathrm{q}_{ij}\left| u^{\varepsilon,k}\bigl(\X^{\varepsilon,i}(x;s),j\bigr) -u^{\varepsilon,k-1}\bigl(\X^{\varepsilon,i}(x;s),j\bigr)\right| \E^{\int_0^{s} (e(\X^{\varepsilon,i}(x;v)/\varepsilon,i)+\varepsilon^{-2}\mathrm{q}_{ii})\D v}\D s \right] \,.
\end{align*}
Thus, \begin{align*}\max_{ i\in[n]}\sup_{x\in\overline{\mathscr{D}}}| u^{\varepsilon,k+1}(x,i) -u^{\varepsilon,k}(x,i)|\,\le\,&\max_{ i\in[n]}\sup_{x\in\overline{\mathscr{D}}}| u^{\varepsilon,k}(x,i) -u^{\varepsilon,k-1}(x,i)|\\&   \max_{ i\in[n]}\sup_{x\in\overline{\mathscr{D}}}\left|1-\mathbb{E}\bigl[\E^{\varepsilon^{-2}\mathrm{q}_{ii}\hat{\uptau}^{\varepsilon,x,i}}\bigr]\right|\,.\end{align*} From \cref{ES4.5} we have that $$\max_{ i\in[n]}\sup_{x\in\overline{\mathscr{D}}}\bigl\lVert1-\mathbb{E}\bigl[\E^{\varepsilon^{-2}\mathrm{q}_{ii}\uptau^{\varepsilon,x,i}}\bigr]\bigr\rVert\,<\,1\,.$$
Hence, the sequence $\{u^{\varepsilon,k}\}_{k\ge1}$ converges uniformly on $\overline{\mathscr{D}}$ to  $u^\varepsilon\in\mathcal{C}(\overline{\mathscr{D}}\times[n],\R)$. In particular, $$\sup_{i\in[n],\, k\ge0}\sup_{x\in\overline{\mathscr{D}}}| u^{\varepsilon,k}(x,i)|\,<\,\infty\,.$$ From \cite[Theorem 9.11]{Gilbarg-Trudinger-Book-2001} we now conclude that for any $p\ge1$ and any open set $\mathscr{D}'\subset \mathscr{D}$ with compact closure in $\mathscr{D}$, $$\sup_{i\in[n],\, k\ge0}\lVert u^{\varepsilon,k}\rVert_{\mathcal{W}^{2,p}(\mathscr{D}'\times[n],\R)}\,<\,\infty\,,$$ where  $\mathcal{W}^{2,p}(\mathscr{D}'\times[n],\R)$ denotes the space of functions $h:\mathscr{D}'\times[n]\to\R$ such that $h(\cdot,i)\in\mathcal{W}^{2,p}(\mathscr{D}',\R^d)$.
 In particular, $u^\varepsilon\in\mathcal{W}^{2,p}(\mathscr{D}'\times[n],\R)$. This, together with \cite[Corollary 7.11]{Gilbarg-Trudinger-Book-2001},  gives that
$u^{\varepsilon}\in\mathcal{C}^1(\mathscr{D}\times[n],\R)$. Now,  \cite[Theorem 6.13]{Gilbarg-Trudinger-Book-2001} implies that
\begin{align*}
\mathcal{L}_i^{\varepsilon} \bar{u}^{\varepsilon}(x,i)+\varepsilon^{-2}\mathrm{q}_{ii}\bar{u}^{\varepsilon}(x,i)&+\varepsilon^{-2}\sum_{j\neq i}\mathrm{q}_{ij}u^{\varepsilon}(x,j) \\&+
e(x/\varepsilon,i)\,\bar{u}^{\varepsilon}(x,i)+f(x)\,=\,0\,,\qquad x\in\mathscr{D}\,,\\
&\hspace{3.25cm}\bar{u}^{\varepsilon}(x,i)\,=\, g(x)\,,\qquad x\in\partial\mathscr{D}\,,
\end{align*}
admits a unique $\mathcal{C}(\overline{\mathscr{D}}\times[n],\R)\cap\mathcal{C}^2(\mathscr{D}\times[n],\R)$ solution.
Furthermore, since $u^{\varepsilon,k}(x,i)$ satisfies \cref{ES4.4}, it follows that 
\begin{align*}u^{\varepsilon}(x,i)\,=\,\mathbb{E}\Big[&g\bigl(\X^{\varepsilon,i}(x;\uptau^{\varepsilon,x,i})\bigr)\E^{\int_0^{\uptau^{\varepsilon,x,i}} (e(\X^{\varepsilon,i}(x;s)/\varepsilon,i)+\varepsilon^{-2}\mathrm{q}_{ii})\D s}\\&+\int_0^{\uptau^{\varepsilon,x,i}}\left(f\bigl(\X^{\varepsilon,i}(x;s)\bigr)+\varepsilon^{-2}\sum_{j\neq i}\mathrm{q}_{ij}u^{\varepsilon}\bigl(\X^{\varepsilon,i}(x;s),j\bigr)\right) \E^{\int_0^{s} (e(\X^{\varepsilon,i}(x;v)/\varepsilon,i)+\varepsilon^{-2}\mathrm{q}_{ii})\D v}\D s \Big]\,,\end{align*} which together with \cite[Theorems 3.47 and  3.49]{Pardoux-Rascanu-Book-2014} gives that $u^{\varepsilon}(x,i)=\bar{u}^{\varepsilon}(x,i)$.

We next show that $\lim_{\varepsilon\to0} u^\varepsilon(x,i)=u(x)$ for all $(x,i)\in\mathscr{D}\times[n]$, where $u\in\mathcal{C}(\overline{\mathscr{D}},\R)\cap \mathcal{C}^2(\mathscr{D},\R)$ is a unique  solution to \cref{ES4.1} (see \cite[Theorem 6.13]{Gilbarg-Trudinger-Book-2001}). 	We follow the approach from \cite[Theorem 3.4.5]{Bensoussan-Lions-Papanicolaou-Book-1978}. First,  \cite[Theorem 5.1]{Friedman-Book-1975} and \cite[Theorem 3.3]{Zhu-Yin-Baran-2015} (see also \cite[eq. (4.7)]{Freidlin-Book-1985}) imply that 
$$u(x)\,=\,\mathbb{E}\Big[g\bigl(\mathrm{W}^{\mathsf{a},\mathsf{b}}(x;\hat{\uptau}^{x})\bigr)\E^{ \uppi(e)\hat{\uptau}^{x}}+\int_0^{\hat{\uptau}^{x}}f\bigl(\mathrm{W}^{\mathsf{a},\mathsf{b}}(x;s)\bigr) \E^{\uppi^0(e)s}\D s \Big]$$ 
and
\begin{align*}u^{\varepsilon}(x,i)\,=\,\mathbb{E}\Big[&g\bigl(\X^\varepsilon(x,i;\hat{\uptau}^{\varepsilon,x,i})\bigr)\E^{\int_0^{\hat{\uptau}^{\varepsilon,x,i}} e(\X^\varepsilon(x,i;s)/\varepsilon,\Lambda(i;s/\varepsilon^2))\D s}\\&+\int_0^{\hat{\uptau}^{\varepsilon,x,i}}f\bigl(\X^\varepsilon(x,i;s)\bigr) \E^{\int_0^{s} e(\X^\varepsilon(x,i;v)/\varepsilon,\Lambda(i;v/\varepsilon^{-2}))\D v}\D s \Big]\,,\end{align*} where $\hat{\uptau}^{x}=\inf\{t\ge0:\mathrm{W}^{\mathsf{a},\mathsf{b}}(x;t)\notin \mathscr{D}\}$ and $\hat{\uptau}^{\varepsilon,x,i}=\inf\{t\ge0:\X^\varepsilon(x,i;t)\notin \mathscr{D}\}.$ Due to \textbf{(A2)} and \cite[Lemma 3.2]{Zhu-Yin-Baran-2015},
$$\mathbb{E}[\hat{\uptau}^{x}]\,<\,\infty\quad\text{and}\quad\max_{ i\in[n]}\mathbb{E}[\hat{\uptau}^{\varepsilon,x,i}]\,<\,\infty\qquad\forall\, x\in\mathscr{D}\,.$$
 Define 
\begin{equation*}
\zeta^\varepsilon(x,i;t)\,\df\,  \int_0^{t}e\bigl( \X^\varepsilon(x,i;s)/\varepsilon,\Lambda(i;s/\varepsilon^2)\bigr)\,\D s\,.
\end{equation*}
Analogously as in the proof of \Cref{T3.1}, we conclude that 
\begin{equation}\label{ES4.6}
\zeta^\varepsilon(x,i;t)  \,\xrightarrow[\varepsilon\to0]{{\rm L}^2(\Prob)}\, \uppi^0\left(e\right)t\qquad\forall\ t\ge0\,,
\end{equation} where $\ \xrightarrow[]{{\rm L}^p(\Prob)}$ stands for the convergence in ${\rm L}^p(\Prob)$, $p\ge1$.
Set $\zeta(x;t)\df \uppi^0\left(e\right) t$.
 This, together with the fact that the process $\{\zeta^\varepsilon(x,i;t)\}_{t\ge0}$ is tight (due to \cite[Theorem VI.3.21]{Jacod-Shiryaev-2003}), implies that $\{\zeta^\varepsilon(x,i;t)\}_{t\ge0} \xRightarrow[\varepsilon\to0]{({\rm d})} \{\zeta(x;t)\}_{t\ge0}$. Since $\{\zeta(x;t)\}_{t\ge0}$ is a constant in the space $\mathcal{C}([0,\infty),\R)$, using \Cref{T3.1} and \cite[Theorem 3.9]{Billingsley-Book-1999} we conclude (recall that $\uppi^0(\bb)=0$)
\begin{equation}\label{ES4.7}
\{(\X^\varepsilon, \zeta^\varepsilon)(x,i;t)\}_{t\ge0}\, \xRightarrow[\varepsilon\to0]{({\rm d})}\, \{(\mathrm{W}^{\mathsf{a},\mathsf{b}}, \zeta)(x;t)\}_{t\ge0}\,.
\end{equation}
We next endow the space $ \mathcal{C}([0,\infty),\R^d)\times \mathcal{C}([0,\infty),\R)$  with the Borel $\sigma$-algebra generated by the sets $\left\{ (\mathrm{W}^{\mathsf{a},\mathsf{b}}, \xi)(x;\cdot) \in  \mathcal{C}([0,\infty),\R^d)\times \mathcal{C}([0,\infty),\R) \colon (\mathrm{W}^{\mathsf{a},\mathsf{b}}, \zeta)(x;s) \in B \right\}$ where $s \in [0,\infty)$ and $ B \in \mathfrak{B}(\R^{d+1})$. The processes $\{(\X^\varepsilon, \zeta^\varepsilon)(x,i;t)\}_{t\ge0}$ and $\{(\mathrm{W}^{\mathsf{a},\mathsf{b}}, \zeta)(x;t)\}_{t\ge0}$ introduce on $\mathcal{C}([0,\infty),\R^d)\times \mathcal{C}([0,\infty),\R)$  probability measures $\upmu_{x,i}^\varepsilon$ and $\upmu_x$, respectively. Observe that  $\upmu_{x,i}^\varepsilon\xRightarrow[\varepsilon\to0]{({\rm w})}\upmu_x$, where
   $\, \xRightarrow[]{({\rm w})}$
stands for the weak convergence of probability measures.
We next define  $\mathrm{F}: \mathcal{C}([0,\infty),\R^d)\times \mathcal{C}([0,\infty),\R) \to \R\cup\{\infty\}$ with
\begin{equation*}
\mathrm{F}(Y,\eta)\,=\,\begin{cases}
g\bigl(Y\bigl(\uptau (Y)\bigr)\bigr)\,\E^{\eta(\uptau (Y))}+\int_{0}^{\uptau(Y)}f\bigl(Y(t)\bigr)\,\E^{\eta(t)}\,\D t\,, &\uptau(Y)<\infty\ \ \text{and}\ \ \eta(t)\le -\alpha t\ \ \forall t\ge 0\,,

\smallskip

\\

\smallskip

\int_{0}^{\infty}f\bigl(Y(t)\bigr)\,\E^{\eta(t)}\,\D t\,, & \uptau(Y)=\infty\ \ \text{and}\ \ \eta(t)\le -\alpha t\ \ \forall t\ge 0\,,\\

\infty \,, & \text{otherwise}\,,
\end{cases}
\end{equation*}
where $\uptau(Y)\df\inf \{t\ge 0\colon Y(t) \notin \mathscr{D}\}$.  
Clearly,
\begin{equation*}
u^\varepsilon(x,i)\,=\,\mathbb{E} \left[\mathrm{F}\left(\{(\X^\varepsilon, \zeta^\varepsilon)(x,i;t)\}_{t\ge0}\right)\right]\qquad \text{and}\qquad u(x)\,=\, \mathbb{E} \left[\mathrm{F}\left(\{(\mathrm{W}^{\mathsf{a},\mathsf{b}}, \zeta)(x;t)\}_{t\ge0}\right)\right]\,.
\end{equation*}
The function $\mathrm{F}(Y,\eta)$ has the following properties:

\smallskip

\begin{itemize}
	\item[(i)] it is measurable and bounded a.s. with respect to $\upmu_{x,i}^\varepsilon$ and $\upmu_x$
	
	\smallskip
	
	\item[(ii)] it is continuous a.s. with respect to $\upmu_x$.
\end{itemize}

\smallskip

\noindent This, together with \cref{ES4.7}, gives 
\begin{equation*}
\lim\limits_{\varepsilon \to 0}u^\varepsilon(x,i)\,=\,u(x)\qquad \forall (x,i)\in\mathscr{D}\times[n]\,,
\end{equation*}
which concludes the proof.

To this end, let us  verify (i) and (ii).	To see that  (i) holds, note that if $\eta(t)\le -\alpha t$ for all $t\ge 0$, then
\begin{align*}
\left|\mathrm{F}(Y,\eta)\right|&\,\le\, \left\| g \right\|_\infty+\left\| f \right\|_\infty \int_{0}^{\infty}\E^{-\alpha t}\,\D t\,=\,\left\| g \right\|_\infty+ \frac{\left\| f \right\|_\infty}{\alpha}<\infty\,.
\end{align*}
Due to the definition of processes $\{\zeta^\varepsilon(x,i;t)\}_{t\ge0}$ and  $\{\zeta(x,t)\}_{t\ge0}$, and the fact that   $e(x,i)\le -\alpha$, property (i) follows.
To see property (ii), we need to check that  if $\{Y_k\}_{k\in\N}$ converges  to $Y$ uniformly on compact intervals, then 
\begin{equation}\label{ES4.8}
\lim_{k\to\infty}\mathrm{F}(Y_k,\eta)\,=\,\mathrm{F}(Y,\eta)
\end{equation} for $\eta(t)=\zeta(x,t)$. Recall that $\zeta(x,t)\le-\alpha t$.
The relation in \cref{ES4.8} will follow from the proof of \cite[Lemma 3.4.3]{Bensoussan-Lions-Papanicolaou-Book-1978} where it has been shown that if $\{Y_k\}_{k\in\N}$ converges  to $Y$ uniformly on compact intervals, then $\lim_{k \to \infty}\uptau(Y_k)=\uptau(Y)$. Namely, from this fact, and the dominated convergence theorem, we immediately see that  $$\lim_{k \to \infty}\int_{0}^{\uptau(Y_k)}f\bigl(Y_k(t)\bigr)\,\E^{\eta(t)}\D t \,=\, \int_{0}^{\uptau(Y)}f\bigl(Y(t)\bigr)\,\E^{\eta(t)}\D t\,.$$
Finally, we need to prove that
\begin{equation*}\lim_{k\to\infty}g\bigl(Y_k\bigl(\uptau (Y_k)\bigr)\bigr)\,\E^{\eta(\uptau (Y_k))} \mathbb{1}_{\{\uptau(Y_k)<\infty\}}\,=\,\begin{cases}
g\bigl(Y\bigl(\uptau (Y)\bigr)\bigr)\,\E^{\eta(\uptau (Y))}\,, &\uptau(Y)<\infty\,,

\smallskip

\\

\smallskip

0\,, &\uptau(Y)=\infty\,.\\

\end{cases}
\end{equation*}
If $\uptau(Y)=\infty$, then $\lim_{k \to \infty}\uptau(Y_k)=\infty$ and since $g(x)$ is bounded and $\eta(t)\le -\alpha t$ for all $t\ge0$, the assertion follows. If $\uptau(Y)<\infty$, then there exist $T>0$ ad $k_Y\in\N$, such that $\uptau(Y_k) \in [0,T]$ for all $k\ge k_Y$. Therefore, 
\begin{align*}
&\left|g\bigl(Y_k\bigl(\uptau (Y_k)\bigr)\bigr)\E^{\eta(\uptau (Y_k))}-g\bigl(Y\bigl(\uptau (Y)\bigr)\bigl)\E^{\eta(\uptau (Y))}\right|\\&\,\le\,\|g\|_\infty\left|\E^{\eta(\uptau (Y_k))}-\E^{\eta(\uptau (Y))}\right| +\E^{\eta(\uptau (Y))}\left|g\bigl(Y_k\bigl(\uptau (Y_k)\bigr)\bigr)-g\bigl(Y\bigl(\uptau (Y_k)\bigr)\bigl)\right|\\&\ \ \ \ \ +\E^{\eta(\uptau (Y))}\left|g\bigl(Y\bigl(\uptau (Y_k)\bigr)\bigr)-g\bigl(Y\bigl(\uptau (Y)\bigr)\bigl)\right|\\
&\,\le\, \|g\|_\infty\left|\E^{\eta(\uptau (Y_k))}-\E^{\eta(\uptau (Y))}\right| + \E^{\eta(\uptau (Y))}\sup_{0\le t \le T} \left|g\bigl(Y_k(t)\bigr)-g\bigl(Y(t)\bigr)\right|\\&\ \ \ \ \ +\E^{\eta(\uptau (Y))}\left|g\bigl(Y\bigl(\uptau (Y_k)\bigr)\bigr)-g\bigl(Y\bigl(\uptau (Y)\bigr)\bigl)\right|\,.
\end{align*} Clearly, the first and last terms in the above inequality tend to zero as $k$ tends to infinity.
Suppose that $\limsup_{k \to \infty}\sup_{0\le t \le T} |g(Y_k(t))-g(Y(t))|>0$. Then there exist $\epsilon>0$ and sequences $\{k_l\}_{l \in \N} \subseteq \N$ and $\{t_l\}_{l\in \N} \subseteq [0,T]$, such that $\lim_{l\to\infty}t_l = t \in [0,T]$ and $|g(Y_{k_l}(t_l))-g(Y(t_l))|>\epsilon$ for all $l\in\N$. However, this is not possible since $\lim_{l\to\infty}g(Y(t_l)) =g(Y(t))$ and 
\begin{align*}
\lim_{l\to\infty}\left|Y_{k_l}(t_l)-Y(t)\right|&\,\le\, \lim_{l\to\infty}\left|Y_{k_l}(t_l)-Y(t_l)\right|+\lim_{l\to\infty}\left|Y(t_l)-Y(t)\right|\\&\,\le\, \lim_{l\to\infty}\sup_{0\le s \le T}\left|Y_{k_l}(s)-Y(s)\right|+\lim_{l\to\infty}\left|Y(t_l)-Y(t)\right|\\
&\,=\,0\,.
\end{align*}
From this we conclude that 
\begin{equation*}
\lim_{k \to \infty}\left|g\bigl(Y_k\bigl(\uptau (Y_k)\bigr)\bigr)\E^{\eta(\uptau (Y_k))}-g\bigl(Y\bigl(\uptau (Y)\bigr)\bigl)\E^{\eta(\uptau (Y))}\right|\,=\, 0\,,
\end{equation*}
which proves the assertion. \end{proof}

\subsection{The parabolic  problem in \cref{ES1.3}.}  Finally, we discuss the  system of parabolic initial-value problems in \cref{ES1.3}.

\begin{theorem}\label{T4.2} In addition to \textbf{(A1)}-\textbf{(A2)},  assume
	\begin{itemize}
		\item[(a)] $\uppi^0(\bb)=0$ (otherwise we replace $\bb$ by $\bb-\uppi^0(\bb)$ in \cref{ES1.3})
		
		\smallskip
		
		\item [(b)]   $e\in\mathcal{C}(\mathbb{T}_\tau^d\times[n],\R)$ 
		
	\smallskip
		
		\item[(c)] $f,g\in\mathcal{C}(\R^d,\R)$ and $$|f(x)|+|g(x)|\,\le\, K\bigl(1+|x|^\kappa\bigr)$$ for some $K>0$ and $0\le\kappa<2$, and all $x\in\R^d$.
		
	\end{itemize}
	\noindent 		Then, 
	\begin{itemize}
		\item [(i)] \cref{ES1.3} admits a solution $u^\varepsilon\in\mathcal{C}^{1,2}((0,\infty)\times\R^d\times[n],\R)$  (the space of functions $h:(0,\infty)\times\R^d\times[n]\to\R$ satisfying $h(\cdot,x,i)\in\mathcal{C}^1((0,\infty),\R)$ for all $(x,i)\in\R^d\times [n]$, and $h(t,\cdot,i)\in\mathcal{C}^2(\R^d,\R)$ for all $(t,i)\in(0,\infty)\times[n])$ such that \begin{equation}\label{ES4}|u^\varepsilon(t,x,i)|\,\le\, K_{\varepsilon,T} \bigl(1+|x|^{\kappa}\bigr)\qquad\forall\, (t,x,i)\in [0,T]\times\R^d\times[n]\end{equation} for some  $K_{\varepsilon,T}>0$ and all $T>0$
		
		\smallskip
		
		\item[(ii)] 	$ \displaystyle\lim_{\varepsilon \to 0} u^\varepsilon(t,x,i)= u(t,x)$ for all $(x,i,t)\in[0,\infty)\times\R^d\times[n],$ where $u\in\mathcal{C}^{1,2}((0,\infty)\times\R^d,\R)$ is a unique solution to 
		\begin{equation}\label{ES4.9}
		\begin{aligned}
		\partial_t u(t,x)&\,=\,2^{-1}\mathrm{Tr}\bigl(\mathsf{a}\nabla\nabla^\mathrm{T}\bigr) u(t,x)+\mathsf{b}^\mathrm{T}\nabla u(t,x)+\uppi^0(e) u(t,x)+f(x)\\
		u(0,x)&\,=\,g(x)\,,\qquad x\in\R^d\,.
		\end{aligned} 
		\end{equation}
	\end{itemize}
	\end{theorem}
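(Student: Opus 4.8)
The plan is to mirror the proof of \Cref{T4.1}, replacing its elliptic ingredients by parabolic ones. The main structural simplification is that, since \cref{ES1.3} involves no lateral boundary, the Feynman--Kac functional entering the homogenization step is continuous everywhere on path space (not merely almost surely), so no separate control of exit times will be needed.

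For (i), I would produce $u^\varepsilon$ by the same decoupling iteration as in \Cref{T4.1}: let $u^{\varepsilon,0}$ solve the scalar Cauchy problem for $\mathcal{L}_i^\varepsilon+\varepsilon^{-2}\mathrm{q}_{ii}+e(\cdot/\varepsilon,i)$ with initial datum $g$ and source $f$, and for $k\ge1$ add the inhomogeneous coupling term $\varepsilon^{-2}\sum_{j\neq i}\mathrm{q}_{ij}u^{\varepsilon,k-1}(t,x,j)$; classical parabolic theory \cite{Friedman-Book-1964} together with \cite[Theorem 3.2]{Zhu-Yin-Baran-2015} gives each $u^{\varepsilon,k}\in\mathcal{C}^{1,2}$ of polynomial growth and the attendant Feynman--Kac representation in terms of $\{\X^{\varepsilon,i}(x;t)\}_{t\ge0}$. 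Since $e$ is bounded on the torus, $\E^{\int_0^s(e(\X^{\varepsilon,i}(x;v)/\varepsilon,i)+\varepsilon^{-2}\mathrm{q}_{ii})\D v}\le\E^{s\|e(\cdot,i)\|_\infty}\E^{\varepsilon^{-2}\mathrm{q}_{ii}s}$, so choosing $\delta=\delta(\varepsilon,T)>0$ so small that both $\int_0^\delta\varepsilon^{-2}|\mathrm{q}_{ii}|\E^{s\|e(\cdot,i)\|_\infty}\E^{\varepsilon^{-2}\mathrm{q}_{ii}s}\,\D s<1/4$ and $\sup_{s\le\delta,\,x}\mathbb{E}[1+|\X^{\varepsilon,i}(x;s)|^\kappa](1+|x|^\kappa)^{-1}\le2$ makes $k\mapsto u^{\varepsilon,k}$ a contraction on $[0,\delta]$ in the weighted norm $\sup_{t\le\delta,\,x,\,i}|h(t,x,i)|(1+|x|^\kappa)^{-1}$; bootstrapping over consecutive intervals of length $\delta$ covers $[0,T]$. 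Interior parabolic $\mathcal{W}^{2,p}$- and Schauder estimates, exactly as in \Cref{T4.1}, upgrade the uniform limit to a $\mathcal{C}^{1,2}$ solution $u^\varepsilon$ of \cref{ES1.3}, which inherits the representation $u^\varepsilon(t,x,i)=\mathbb{E}[g(\X^\varepsilon(x,i;t))\E^{\zeta^\varepsilon(x,i;t)}+\int_0^t f(\X^\varepsilon(x,i;s))\E^{\zeta^\varepsilon(x,i;s)}\D s]$ with $\zeta^\varepsilon(x,i;t)\df\int_0^t e(\X^\varepsilon(x,i;s)/\varepsilon,\Lambda(i;s/\varepsilon^2))\D s$. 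The bound \cref{ES4} then follows from $\E^{\zeta^\varepsilon}\le\E^{t\|e\|_\infty}$ and $\mathbb{E}[|\X^\varepsilon(x,i;t)|^\kappa]\le C_{\varepsilon,T}(1+|x|^\kappa)$, and uniqueness within this growth class follows from the maximum principle (alternatively from \cite[Theorem 3.2]{Zhu-Yin-Baran-2015}).

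For (ii) I would reuse the probabilistic machinery of \Cref{T3.1} and of the proof of \Cref{T4.1}. Assumption (a) and \Cref{T3.1} give $\{\X^\varepsilon(x,i;t)\}_{t\ge0}\xRightarrow[\varepsilon\to0]{({\rm d})}\{\mathrm{W}^{\mathsf{a},\mathsf{b}}(x;t)\}_{t\ge0}$, and arguing as for \cref{ES4.6,ES4.7} (ergodic averaging via \cref{ES2.7} and \Cref{P2.4}, then \cite[Theorem 3.9]{Billingsley-Book-1999}) yields $\{(\X^\varepsilon,\zeta^\varepsilon)(x,i;t)\}_{t\ge0}\xRightarrow[\varepsilon\to0]{({\rm d})}\{(\mathrm{W}^{\mathsf{a},\mathsf{b}},\zeta)(x;t)\}_{t\ge0}$ with $\zeta(x;t)=\uppi^0(e)t$. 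By \cite[Theorem 3.2]{Zhu-Yin-Baran-2015} and the classical scalar parabolic theory for \cref{ES4.9} (see \cite{Friedman-Book-1964}), $u^\varepsilon(t,x,i)=\mathbb{E}[\mathrm{F}_t((\X^\varepsilon,\zeta^\varepsilon)(x,i;\cdot))]$ and $u(t,x)=\mathbb{E}[\mathrm{F}_t((\mathrm{W}^{\mathsf{a},\mathsf{b}},\zeta)(x;\cdot))]$, where $\mathrm{F}_t(Y,\eta)\df g(Y(t))\E^{\eta(t)}+\int_0^t f(Y(s))\E^{\eta(s)}\,\D s$; no exit time is involved, so $\mathrm{F}_t$ is continuous on $\mathcal{C}([0,\infty),\R^d)\times\mathcal{C}([0,\infty),\R)$ (continuity of $f,g$ for the endpoint term, dominated convergence for the integral), whence the continuous mapping theorem gives $\mathrm{F}_t((\X^\varepsilon,\zeta^\varepsilon)(x,i;\cdot))\xRightarrow[\varepsilon\to0]{({\rm d})}\mathrm{F}_t((\mathrm{W}^{\mathsf{a},\mathsf{b}},\zeta)(x;\cdot))$, and it remains to pass this to convergence of expectations.

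The main obstacle is precisely this last step: since $f$ and $g$ are only of polynomial growth, $\mathrm{F}_t$ is unbounded, so one needs $\{\mathrm{F}_t((\X^\varepsilon,\zeta^\varepsilon)(x,i;\cdot))\}_{\varepsilon\in(0,1]}$ uniformly integrable — concretely, bounded in $\mathrm{L}^2(\Prob)$. As $\E^{\zeta^\varepsilon}\le\E^{t\|e\|_\infty}$ is bounded, this reduces to a uniform moment bound $\sup_{\varepsilon\in(0,1]}\mathbb{E}[\sup_{s\le t}|\X^\varepsilon(x,i;s)|^{p}]\le C_{p,t}(1+|x|^p)$ for every $p\ge1$, which looks problematic because the drift of $\{\X^\varepsilon(x,i;t)\}_{t\ge0}$ is the large term $\varepsilon^{-1}\bb$. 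Here assumption (a), $\uppi^0(\bb)=0$, is what saves the day: the corrector decomposition from the proof of \Cref{T3.1} expresses $\X^\varepsilon(x,i;t)-x$ as $\varepsilon\beta(\bar\X^\varepsilon(x/\varepsilon,i;t/\varepsilon^2),\Lambda(i;t/\varepsilon^2))-\varepsilon\beta(x/\varepsilon,i)$ plus a drift term $\varepsilon^2\int_0^{t/\varepsilon^2}((\Id_d-\DD\beta)\cc)(\cdots)\D s$ of deterministic size $O(t)$ plus the martingale $\varepsilon\int_0^{t/\varepsilon^2}((\Id_d-\DD\beta)\upsigma)(\bar\X^\varepsilon(x/\varepsilon,i;s),\Lambda(i;s))\,\D\B(s)$, all three with coefficients bounded by $\tau$-periodicity ($\beta\in\mathcal{C}^2(\mathbb{T}_\tau^d\times[n],\R^d)$); the Burkholder--Davis--Gundy inequality bounds the martingale part in $\mathrm{L}^p$ by $C_{p,t}\|(\Id_d-\DD\beta)\upsigma\|_\infty$, uniformly in $\varepsilon\in(0,1]$, giving the desired moment bound. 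Uniform integrability then follows, hence $\lim_{\varepsilon\to0}u^\varepsilon(t,x,i)=u(t,x)$ for every $(t,x,i)$, and \Cref{P3.3} guarantees \cref{ES4.9} is uniformly parabolic, so that $u$ is its unique $\mathcal{C}^{1,2}$ solution of polynomial growth.
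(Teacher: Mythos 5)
Your proposal is correct, and its probabilistic core coincides with the paper's: Feynman--Kac representation of $u^\varepsilon$, joint weak convergence of $(\X^\varepsilon,\zeta^\varepsilon)$ to $(\mathrm{W}^{\mathsf{a},\mathsf{b}},\uppi^0(e)\,t)$ as in \cref{ES4.6,ES4.7}, and the corrector decomposition from \Cref{L3.2}/\Cref{T3.1} to obtain moment bounds on $\X^\varepsilon$ that are uniform in $\varepsilon$ (this is exactly where hypothesis (a) enters in both arguments). The differences are in packaging. For (i), the paper does not rerun the decoupling iteration of \Cref{T4.1}: it simply invokes \cite[Theorem 9.4.3]{Friedman-Book-1964} for existence of a $\mathcal{C}^{1,2}$ solution with the growth bound \cref{ES4}, and then uses \cite[Theorem 3.2]{Zhu-Yin-Baran-2015} together with a time-reversal observation (\Cref{R4.3}) to get the forward representation \cref{ES4.10}; your Picard-type construction with the weighted-norm contraction on short time intervals and interior $\mathcal{W}^{2,p}$/Schauder upgrading is a viable, more self-contained substitute, at the cost of extra verification. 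For (ii), you treat the whole representation as a single continuous functional $\mathrm{F}_t$ on path space and pass to the limit by the continuous mapping theorem plus uniform integrability, with the uniform $\mathrm{L}^p$ bounds on $\sup_{s\le t}|\X^\varepsilon(x,i;s)|$ obtained via BDG from the corrector decomposition; the paper instead argues more by hand, splitting off the exponential factor by Cauchy--Schwarz and the $\mathrm{L}^2$ convergence of $\zeta^\varepsilon(x,i;t)$, proving $\mathbb{E}[g(\X^\varepsilon(x,i;t))]\to\mathbb{E}[g(\mathrm{W}^{\mathsf{a},\mathsf{b}}(x;t))]$ through Skorohod representation, Fatou and a truncation at level $p$ controlled by a fourth-moment bound (Itô plus Doob at fixed times), and handling the source term by pointwise convergence in $s$ and dominated convergence on $[0,t]$. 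Your route is marginally cleaner conceptually (one UI argument covers both terms, and your sup-moment bound is stronger than needed, since the growth exponent $\kappa<2$ makes fourth moments sufficient), while the paper's avoids any appeal to path-space functional continuity beyond what was already set up for \Cref{T4.1}; both are sound.
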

\begin{proof} Existence of a solution $u^\varepsilon\in\mathcal{C}^{1,2}((0,\infty)\times\R^d\times[n],\R)$ to \cref{ES1.3} satisfying \cref{ES4} follows from \cite[Theorem 9.4.3]{Friedman-Book-1964}. Next,  
	according to \cite[Theorem 3.2]{Zhu-Yin-Baran-2015}
	 (see also \cite[Theorem 5.4.1]{Freidlin-Book-1985}),  \cite[Theorem 5.3]{Friedman-Book-1975} and \Cref{R4.3},
	 \begin{equation}\label{ES4.10} \begin{aligned}u^\varepsilon(t,x,i)\,=\,&\mathbb{E}\Big[g\bigl(\X^\varepsilon(x,i;t)\bigr)\E^{\int_0^te(\X^\varepsilon(x,i;s)/\varepsilon,\Lambda(i;s/\varepsilon^2))\D s}\\&+\int_0^tf\bigl(\X^\varepsilon(x,i;s)\bigr)\E^{\int_0^se(\X^\varepsilon(x,i;v)/\varepsilon,\Lambda(i;v/\varepsilon^2))\D v}\D s\Big]\,,\end{aligned}\end{equation}
	and a $\mathcal{C}^{1,2}((0,\infty)\times\R^d,\R)$-unique solution $u(t,x)$ to \cref{ES4.9} exists and
	\begin{align*}u(t,x)\,=\,&\mathbb{E}\Big[g\bigl(\mathrm{W}^{\mathsf{a},\mathsf{b}}(x;t)\bigr)\E^{\uppi^0(e)t}+\int_0^tf\bigl(\mathrm{W}^{\mathsf{a},\mathsf{b}}(x;s)\bigr)\E^{\uppi^0(e)s}\D s\Big]\,.\end{align*}
		We first show that $$\lim_{\varepsilon \to 0}\mathbb{E}\left[g\bigl(\X^\varepsilon(x,i;t)\bigr)\E^{\int_0^te(\X^\varepsilon(x,i;s)/\varepsilon,\Lambda(i;s/\varepsilon^2))\D s}\right]\,=\,\mathbb{E}\left[g\bigl(\mathrm{W}^{\mathsf{a},\mathsf{b}}(x;t)\bigr)\right]\E^{\uppi^0(e)t}\,.$$
	By assumption,
	\begin{align}\label{ES4.11}
	\mathbb{E}\left[\left|g\bigl( \X^\varepsilon(x,i;t)\bigr)\right|^2\right]\,\le\,2K\left(2+\mathbb{E}\left[\bigl|\X^\varepsilon(x,i;t)\bigr)\bigr|^{4}\right]\right)\,.
	\end{align}
	From \Cref{L3.2} it follows that (recall that $\uppi^0(\bb)=0$)
	\begin{equation*}
	\begin{aligned}
\X^\varepsilon(x,i;t) \,=\,&\varepsilon\bar\X^\varepsilon(x/\varepsilon,i;t/\varepsilon^2)\\
 \,=\,&x+\varepsilon\beta\bigl(\bar\X^\varepsilon(x/\varepsilon,i;t/\varepsilon^2),\Lambda(i;t/\varepsilon^2)\bigr)-\varepsilon\beta(x,i)
	\\&+\varepsilon^2\int_0^{t/\varepsilon^2}\bigl((\Id_d-\mathrm{D}\beta)c\bigr)\bigl(\bar\X^\varepsilon(x/\varepsilon,i;s),\Lambda(i;s)\bigr)\D s\\
	&+ \varepsilon\int_0^{t/\varepsilon^2}\bigl((\Id_d-\mathrm{D}\beta)\upsigma\bigr)\bigl(\bar\X^\varepsilon(x/\varepsilon,i;s),\Lambda(i;s)\bigr)\D \B(s)\,.
	\end{aligned}	
	\end{equation*}
	Thus,
	\begin{align*}
	|\X^\varepsilon(x,i;t)|^{4}\,\le\,\bar K\bigg(&|x|^{4}+\varepsilon^{4}+t^{4}\\&+\varepsilon^{4}\left|\int_0^{\varepsilon^{-2}t}\left((\mathbb{I}_d-\mathrm{D}\beta)\upsigma\right)\bigl(\bar\X^\varepsilon(x/\varepsilon,i;s),\Lambda(i;s)\bigr)\D \B(s)\right|^{4}\bigg)\,,
	\end{align*}	
	for some $\bar K>0$ which does not depend on $\varepsilon$. By employing  It\^{o}'s formula and Doob's inequality, we conclude \begin{equation}\label{ES4.12}\mathbb{E}\left[|\X^\varepsilon(x,i;t)|^{4}\right]\,\le\,\tilde K\left(|x|^{4}+\varepsilon^{4}+\max\{t^{2},t^4\}\right)\,,\end{equation}
	for some $\tilde K>0$ which does not depend on $\varepsilon$. Consequently,
	\begin{align*}
	&\left|\mathbb{E}\left[g\bigl(\X^\varepsilon(x,i;t)\bigr)\E^{\int_0^te(\X^\varepsilon(x,i;s)/\varepsilon,\Lambda(i;s/\varepsilon^2))\D s}\right]-\mathbb{E}\left[g\bigl( \X^\varepsilon(x,i;t)\bigr)\right]\E^{ \uppi^0(e)t}\right|\\
	&\,\le\,	\mathbb{E}\left[\left|g\bigl( \X^\varepsilon(x,i;t)\bigr)\right|^2\right]^{1/2}\mathbb{E}\left[\left|\E^{  \int_0^{t}(e(\X^\varepsilon(x,i;s)/\varepsilon,\Lambda(i;s/\varepsilon^2))-\uppi^0(e))\D s}-1\right|\right]^{1/2}\E^{\uppi^0(e)t}\,.
	\end{align*}
		From  \cref{ES4.11,ES4.12} we see that the first term on the right-hand side  is uniformly bounded for $\varepsilon$ on finite intervals. \Cref{ES4.6}, Skorohod representation theorem and dominated convergence theorem imply that the second term on the right-hand side converges to zero as $\varepsilon\to0$.
		It remains to prove that 
	$$\lim_{\varepsilon \to 0}\mathbb{E}\bigl[g\bigl(\X^\varepsilon(x,i;t)\bigr)\bigr]\,=\,\mathbb{E}\bigl[g\bigl(\mathrm{W}^{\mathsf{a},\mathsf{b}}(x,t)\bigr)\bigr]\qquad \forall\, t\ge0\,.$$ Without loss of generality we may assume that $g(x)$ is non-negative. From Skorohod representation theorem and Fatou's lemma we conclude
	$$\liminf_{\varepsilon \to 0}\mathbb{E}\bigl[g\bigl(\X^\varepsilon(x,i;t)\bigr)\bigr]\,\ge\,\mathbb{E}\bigl[g\bigl(\mathrm{W}^{\mathsf{a},\mathsf{b}}(x,t)\bigr)\bigr]\qquad \forall\, t\ge0\,.$$ To prove the reverse inequality we proceed as follows. For any $t\ge0$ we have
	\begin{align*}
	&\limsup_{\varepsilon \to 0}\mathbb{E}\bigl[g\bigl(\X^\varepsilon(x,i;t)\bigr)\bigr]\\ &\,\le\,\limsup_{p\to\infty}\limsup_{\varepsilon \to 0}\mathbb{E}\left[\min\bigl\{g\bigl(\X^\varepsilon(x,i;t)\bigr),p\bigr\}\right]\\&\ \ \ \ +\limsup_{p\to\infty}\limsup_{\varepsilon \to 0}\mathbb{E}\left[g\bigl(\X^\varepsilon(x,i;t)\bigr)\,\mathbb{1}_{\{g(\X^\varepsilon(x,i;t))\ge p\}}\right]\\
	&\,\le\,\limsup_{p\to\infty}\mathbb{E}\left[\min\bigl\{g\bigl(\mathrm{W}^{\mathsf{a},\mathsf{b}}(x,t)\bigr),p\bigr\}\right]\\&\ \ \ \ +\limsup_{p\to\infty}\limsup_{\varepsilon \to 0}\mathbb{E}\left[g\bigl(\X^{\varepsilon}(x,i;t)\bigr)\,\mathbb{1}_{\{g(\X^\varepsilon(x,i;t))\ge p\}}\right]\\
	&\,=\,\mathbb{E}\bigl[g\bigl(\mathrm{W}^{\mathsf{a},\mathsf{b}}(x,t)\bigr)\bigr] +\limsup_{p\to\infty}\limsup_{\varepsilon \to 0}\mathbb{E}\left[g\bigl(\X^\varepsilon(x,i;t)\bigr)\,\mathbb{1}_{\{g(\X^\varepsilon(x,i;t))\ge p\}}\right]\,.
	\end{align*}
	We show next that $$\limsup_{p\to\infty}\limsup_{\varepsilon \to 0}\mathbb{E}\left[g\bigl(\X^\varepsilon(x,i;t)\bigr)\,\mathbb{1}_{\{g(\X^\varepsilon(x,i;t))\ge p\}}\right]\,=\,0\qquad\forall\,t\ge0\,.$$
	We have
	\begin{align*}&\mathbb{E}\left[g\bigl(\X^\varepsilon(x,i;t)\bigr)\,\mathbb{1}_{\{g(\X^\varepsilon(x,i;t))\ge p\}}\right]\\
	&\,\le\,\mathbb{E}\left[\left|g\bigl(\X^\varepsilon(x,i;t)\bigr)\right|^2\right]^{1/2} \left(\Prob\left(g\left(\X^\varepsilon(x,i;t)\right)\ge p\right)\right)^{1/2}\\
	&\,\le\,\frac{1}{p}\,\mathbb{E}\left[\left|g\bigl(\X^\varepsilon(x,i;t)\bigr)\right|^2\right]\,.
	\end{align*}  The assertion  now follows from \cref{ES4.11,ES4.12}.

Finally, we show \begin{align*}&\lim_{\varepsilon \to 0}\mathbb{E}\left[ \int_0^tf\bigl(  \X^\varepsilon(x,i;s)\bigr)\,\E^{\int_0^se(\X^\varepsilon(x,i;v)/\varepsilon,\Lambda(i,v/\varepsilon^2))\D v}\D s\right]\,=\,\mathbb{E}\left[\int_0^tf(\mathrm{W}^{\mathsf{a},\mathsf{b}}(x,s))\,\E^{\uppi^0(e)s}\,\D s\right]\,.\end{align*} From the first part of the proof we see that 
	\begin{align*}&\lim_{\varepsilon \to 0}\mathbb{E}\left[ f\bigl(  \X^\varepsilon(x,i;s)\bigr)\,\E^{\int_0^se(\X^\varepsilon(x,i;v)/\varepsilon,\Lambda(i;v/\varepsilon^2))\D v}\right]\,=\,\mathbb{E}\left[f\bigl(\mathrm{W}^{\mathsf{a},\mathsf{b}}(x,s)\bigr)\,\E^{\uppi^0(e)\, s}\right]\qquad \forall\,s\ge0\,,\end{align*} 
	and
	\begin{align*}&\mathbb{E}\left[ f\bigl( \X^\varepsilon(x,i;s)\bigr)\,\E^{\int_0^se(\X^\varepsilon(x,i;v)/\varepsilon,\Lambda(i;v/\varepsilon^2))\D v}\right]\\
	&\,\le\,\mathbb{E}\left[ \left|f\bigl( \X^\varepsilon(x,i;s)\bigr)\right|^2\right]^{1/2}\mathbb{E}\left[\E^{2\int_0^se(\X^\varepsilon(x,i;v)/\varepsilon,\Lambda(i;v/\varepsilon^2))\,\D v}\right]^{1/2}\\
	&\,\le\, \check{K}\, \bigl(1+|x|^2+\varepsilon^2+\max\{s,s^2\}\bigr)\, \E^{
		\|e\|_\infty s}\,,
	\end{align*} for some $\check{K}>0$ which does not depend on $\varepsilon$.
	The result now follows from the dominated convergence theorem.
	\end{proof}

\begin{remark}\label{R4.3}
	In \cite[Theorem 3.2]{Zhu-Yin-Baran-2015} (and  \cite[Theorem 5.3]{Friedman-Book-1975})
	Feynman-Kac formula for the following backward initial-value problem has been discussed
	\begin{equation}\label{ES4.13} 
	\begin{aligned}
	\partial_tv^\varepsilon(t,x,i)\,=\,&-\bigl(\mathcal{L}_i^{\varepsilon}+\mathcal{Q}\bigr) v^\varepsilon(t,x,i)\\ &-e(x/\varepsilon,i)v^\varepsilon(t,x,i)-f(x)\,,\qquad (t,x,i)\in(0,T)\times\R^d\times[n]\,,\\
	v^\varepsilon(T,x,i)\,=\,& g(x)\,,\qquad (x,i)\in\R^d\times[n]\,,
	\end{aligned} 
	\end{equation}
for fixed $T>0$. 
It has been shown that if \cref{ES4.13}  admits a solution $v^\varepsilon\in\mathcal{C}^{1,2}((0,\infty)\times\R^d\times[n],\R)$   such that $$|v^\varepsilon(t,x,i)|\,\le\, K_{\varepsilon,T} \bigl(1+|x|^{\kappa_{\varepsilon,T}}\bigr)\qquad\forall\, (t,x,i)\in [0,T]\times\R^d\times[n]$$ for some $0\le\kappa_{\varepsilon,T}<2$ and $K_{\varepsilon,T}>0$, and all $T>0$, then 
 \begin{align*}v^\varepsilon(t,x,i)\,=\,&\mathbb{E}\Big[g\bigl(\X^{\varepsilon,t}(x,i;T)\bigr)\E^{\int_t^Te(\X^{\varepsilon,t}(x,i;s)/\varepsilon,\Lambda^t(i;s/\varepsilon^2))\D s}\\&\ \ \ \ +\int_t^Tf\bigl(\X^{\varepsilon,t}(x,i;s)\bigr)\E^{\int_t^se(\X^{\varepsilon,t}(x,i;w)/\varepsilon,\Lambda^t(i;w/\varepsilon^2))\D w}\D s\Big]\,,\qquad 0\le t\le T\,,\end{align*}
 were $\{(\X^{\varepsilon,t}(x,i;s),\Lambda^t(i;s/\varepsilon^2))\}_{s\ge t}$ is a solution to \cref{ES2.1} with $(\X^{\varepsilon,t}(x,i;t),\Lambda^t(i;t/\varepsilon^2))$\linebreak $=(x,i)$. 
However, from this result, and \textit{vice versa}, one can easily deduce that \cref{ES4.10} is a solution to the forward initial-value problem in \cref{ES1.3}. Namely, due to time homogeneity \begin{align*}v^\varepsilon(t,x,i)\,=\,&\mathbb{E}\Big[g\bigl(\X^\varepsilon(x,i;T-t)\bigr)\E^{\int_0^{T-t}e(\X^\varepsilon (x,i;s)/\varepsilon,\Lambda(i;s/\varepsilon^2))\D s}\\&\ \ \ \ \ +\int_0^{T-t}f\bigl(\X^\varepsilon(x,i;s)\bigr)\E^{\int_0^{s}e(\X^\varepsilon(x,i;w)/\varepsilon,\Lambda(i;w/\varepsilon^2))\D w}\D s\Big]\,,\qquad 0\le t\le T\,.\end{align*}
It is straightforward to check  that $v^\varepsilon(T-t,x,i)$ has the representation in \cref{ES4.10} and solves \cref{ES1.3}. 
	\end{remark}

 \section*{Statements and Declarations}
\textbf{Competing Interests:} The author declares that he has no competing interests.

\bibliographystyle{abbrv}
\bibliography{References}

\def\cprime{$'$}
\begin{thebibliography}{10}

\bibitem{Addona-Angiuli-Lorenzi-2019}
D.~Addona, L.~Angiuli, and L.~Lorenzi.
\newblock On invariant measures associated with weakly coupled systems of
  {K}olmogorov equations.
\newblock {\em Adv. Differential Equations}, 24(3-4):137--184, 2019.

\bibitem{Addona-Lorenzi-2023}
D.~Addona and L.~Lorenzi.
\newblock On weakly coupled systems of partial differential equations with
  different diffusion terms.
\newblock {\em Commun. Pure Appl. Anal.}, 22(1):271--303, 2023.

\bibitem{Allaire-2002-Book}
G.~Allaire.
\newblock {\em Shape {O}ptimization by the {H}omogenization {M}ethod}.
\newblock Springer-Verlag, New York, 2002.

\bibitem{Arapostathis-Ghosh-Marcus-1999}
A.~Arapostathis, M.~K. Ghosh, and S.~I. Marcus.
\newblock Harnack's inequality for cooperative weakly coupled elliptic systems.
\newblock {\em Comm. Partial Differential Equations}, 24(9-10):1555--1571,
  1999.

\bibitem{Azunre-2017}
P.~Azunre.
\newblock Bounding the solutions of parametric weakly coupled second-order
  semilinear parabolic partial differential equations.
\newblock {\em Optimal Control Appl. Methods}, 38(4):618--633, 2017.

\bibitem{Zhu-Yin-Baran-2015}
N.~A. Baran, C.~Zhu, and G.~Yin.
\newblock Feynman-{K}ac formulas for regime-switching jump diffusions and their
  applications.
\newblock {\em Stochastics}, 87(6):1000--1032, 2015.

\bibitem{Bensoussan-Lions-Papanicolaou-Book-1978}
A.~Bensoussan, J.~L. Lions, and G.~Papanicolaou.
\newblock {\em Asymptotic analysis for periodic structures}.
\newblock North-Holland Publishing Co., Amsterdam-New York, 1978.

\bibitem{Billingsley-Book-1999}
P.~Billingsley.
\newblock {\em Convergence of probability measures}.
\newblock John Wiley \& Sons, Inc., New York, second edition, 1999.

\bibitem{Boyadzhiev-Kutev-2018}
G.~Boyadzhiev and N.~Kutev.
\newblock Maximum principle for weakly coupled linear non-cooperative systems.
\newblock {\em Pliska Stud. Math.}, 29:37--46, 2018.

\bibitem{Chen-Chen-Tran-Yin-2019}
X.~Chen, Z.-Q. Chen, K.~Tran, and G.~Yin.
\newblock Properties of switching jump diffusions: maximum principles and
  {H}arnack inequalities.
\newblock {\em Bernoulli}, 25(2):1045--1075, 2019.

\bibitem{Chen-Zhao-1994}
Z.-Q. Chen and Z.~Zhao.
\newblock Switched diffusion processes and systems of elliptic equations: a
  {D}irichlet space approach.
\newblock {\em Proc. Roy. Soc. Edinburgh Sect. A}, 124(4):673--701, 1994.

\bibitem{Chen-Zhao-1996}
Z.-Q. Chen and Z.~Zhao.
\newblock Potential theory for elliptic systems.
\newblock {\em Ann. Probab.}, 24(1):293--319, 1996.

\bibitem{Chen-Zhao-1997}
Z.-Q. Chen and Z.~Zhao.
\newblock Harnack principle for weakly coupled elliptic systems.
\newblock {\em J. Differential Equations}, 139(2):261--282, 1997.

\bibitem{Delmonte-Lorenzi-2011}
S.~Delmonte and L.~Lorenzi.
\newblock On a class of weakly coupled systems of elliptic operators with
  unbounded coefficients.
\newblock {\em Milan J. Math.}, 79(2):689--727, 2011.

\bibitem{Down-Meyn-Tweedie-1995}
D.~Down, S.~P. Meyn, and R.~L. Tweedie.
\newblock Exponential and uniform ergodicity of {M}arkov processes.
\newblock {\em Ann. Probab.}, 23(4):1671--1691, 1995.

\bibitem{Durrett-Book-1996}
R.~Durrett.
\newblock {\em Stochastic calculus}.
\newblock CRC Press, Boca Raton, FL, 1996.

\bibitem{Eizenberg-Freidlin-1990}
A.~Eizenberg and M.~Fre\u{\i}dlin.
\newblock On the {D}irichlet problem for a class of second order {PDE} systems
  with small parameter.
\newblock {\em Stochastics Stochastics Rep.}, 33(3-4):111--148, 1990.

\bibitem{Eizenberg-Freidlin-1993-PTRF}
A.~Eizenberg and M.~Fre\u{\i}dlin.
\newblock Averaging principle for perturbed random evolution equations and
  corresponding {D}irichlet problems.
\newblock {\em Probab. Theory Related Fields}, 94(3):335--374, 1993.

\bibitem{Eizenberg-Freidlin-1993-AOP}
A.~Eizenberg and M.~Fre\u{\i}dlin.
\newblock Large deviations for {M}arkov processes corresponding to {PDE}
  systems.
\newblock {\em Ann. Probab.}, 21(2):1015--1044, 1993.

\bibitem{Engler-Lenhart-1991}
H.~Engler and S.~M. Lenhart.
\newblock Viscosity solutions for weakly coupled systems of {H}amilton-{J}acobi
  equations.
\newblock {\em Proc. London Math. Soc. (3)}, 63(1):212--240, 1991.

\bibitem{Ethier-Kurtz-Book-1986}
S.~N. Ethier and T.~G. Kurtz.
\newblock {\em Markov processes}.
\newblock John Wiley \& Sons Inc., New York, 1986.

\bibitem{Eidelman-Book-1969}
S.~D. Eydel{\cprime}man.
\newblock {\em Parabolic systems}.
\newblock North-Holland Publishing Co., Amsterdam-London; Wolters-Noordhoff
  Publishing, Groningen, 1969.

\bibitem{Freidlin-Book-1985}
M.~Fre\u{\i}dlin.
\newblock {\em Functional integration and partial differential equations}.
\newblock Princeton University Press, Princeton, NJ, 1985.

\bibitem{Freidlin-1964}
M.~I. Fre\u{\i}dlin.
\newblock The {D}irichlet problem for an equation with periodic coefficients
  depending on a small parameter.
\newblock {\em Teor. Verojatnost. i Primenen.}, 9:133--139, 1964.

\bibitem{Friedman-Book-1964}
A.~Friedman.
\newblock {\em Partial differential equations of parabolic type}.
\newblock Prentice-Hall, Inc., Englewood Cliffs, N.J., 1964.

\bibitem{Friedman-Book-1975}
A.~Friedman.
\newblock {\em Stochastic differential equations and applications. {V}ol. 1}.
\newblock Academic Press [Harcourt Brace Jovanovich, Publishers], New
  York-London, 1975.

\bibitem{Gilbarg-Trudinger-Book-2001}
D.~Gilbarg and N.~S. Trudinger.
\newblock {\em Elliptic partial differential equations of second order}.
\newblock Springer-Verlag, Berlin, 2001.

\bibitem{Jacod-Shiryaev-2003}
J.~Jacod and A.~N. Shiryaev.
\newblock {\em Limit theorems for stochastic processes}.
\newblock Springer-Verlag, Berlin, second edition, 2003.

\bibitem{Jikov-Kozlov-Oleinik-1994-Book}
V.~V. Jikov, S.~M. Kozlov, and O.~A. Ole{\u\i}nik.
\newblock {\em Homogenization of {D}ifferential {O}perators and {I}ntegral
  {F}unctionals}.
\newblock Springer-Verlag, Berlin, 1994.

\bibitem{Kallenberg-Book-1997}
O.~Kallenberg.
\newblock {\em Foundations of modern probability}.
\newblock Springer-Verlag, New York, 1997.

\bibitem{Kolokoltsov-Book-2011}
V.~N. Kolokoltsov.
\newblock {\em Markov processes, semigroups and generators}, volume~38.
\newblock Walter de Gruyter \& Co., Berlin, 2011.

\bibitem{Kunwai-Zhu-2020}
K.~Kunwai and C.~Zhu.
\newblock On {F}eller and strong {F}eller properties and irreducibility of
  regime-switching jump diffusion processes with countable regimes.
\newblock {\em Nonlinear Anal. Hybrid Syst.}, 38:100946, 21, 2020.

\bibitem{Lazic-Sadric-2022}
P.~Lazi\'c and N.~Sandri\'c.
\newblock On subgeometric ergodicity of regime-switching diffusion processes.
\newblock {\em Nonlinear Analysis: Hybrid systems}, 46:101262, 24, 2022.

\bibitem{Mao-Yuan-Book-2006}
X.~Mao and C.~Yuan.
\newblock {\em Stochastic differential equations with {M}arkovian switching}.
\newblock Imperial College Press, London, 2006.

\bibitem{Meyn-Tweedie-AdvAP-II-1993}
S.~P. Meyn and R.~L. Tweedie.
\newblock Stability of {M}arkovian processes. {II}. {C}ontinuous-time processes
  and sampled chains.
\newblock {\em Adv. in Appl. Probab.}, 25(3):487--517, 1993.

\bibitem{Meyn-Tweedie-AdvAP-III-1993}
S.~P. Meyn and R.~L. Tweedie.
\newblock Stability of {M}arkovian processes. {III}. {F}oster-{L}yapunov
  criteria for continuous-time processes.
\newblock {\em Adv. in Appl. Probab.}, 25(3):518--548, 1993.

\bibitem{Pardoux-Rascanu-Book-2014}
E.~Pardoux and A.~R\u{a}\c{s}canu.
\newblock {\em Stochastic differential equations, backward {SDE}s, partial
  differential equations}.
\newblock Springer, Cham, 2014.

\bibitem{Pinsky-Pinsky-1993}
M.~Pinsky and R.~G. Pinsky.
\newblock Transience/recurrence and central limit theorem behavior for
  diffusions in random temporal environments.
\newblock {\em Ann. Probab.}, 21(1):433--452, 1993.

\bibitem{Protter-Weinberger-Book-1984}
M.~H. Protter and H.~F. Weinberger.
\newblock {\em Maximum principles in differential equations}.
\newblock Springer-Verlag, New York, 1984.

\bibitem{Sirakov-2009}
B.~Sirakov.
\newblock Some estimates and maximum principles for weakly coupled systems of
  elliptic {PDE}.
\newblock {\em Nonlinear Anal.}, 70(8):3039--3046, 2009.

\bibitem{Skorokhod-Book-1989}
A.~V. Skorokhod.
\newblock {\em Asymptotic methods in the theory of stochastic differential
  equations}.
\newblock American Mathematical Society, Providence, RI, 1989.

\bibitem{Sweers-1992}
G.~Sweers.
\newblock Strong positivity in {$C(\overline\Omega)$} for elliptic systems.
\newblock {\em Math. Z.}, 209(2):251--271, 1992.

\bibitem{Tartar-2009-Book}
L.~C. Tartar.
\newblock {\em The {G}eneral {T}heory of {H}omogenization}.
\newblock Springer-Verlag, Berlin; UMI, Bologna, 2009.

\bibitem{Tweedie-1994}
R.~L. Tweedie.
\newblock Topological conditions enabling use of {H}arris methods in discrete
  and continuous time.
\newblock {\em Acta Appl. Math.}, 34(1-2):175--188, 1994.

\bibitem{Xi-Yin-Zhu-2019}
F.~Xi, G.~Yin, and C.~Zhu.
\newblock Regime-switching jump diffusions with non-{L}ipschitz coefficients
  and countably many switching states: existence and uniqueness, {F}eller, and
  strong {F}eller properties.
\newblock In {\em Modeling, stochastic control, optimization, and
  applications}, volume 164 of {\em IMA Vol. Math. Appl.}, pages 571--599.
  Springer, Cham, 2019.

\bibitem{Yin-Zhu-Book-2010}
G.~G. Yin and C.~Zhu.
\newblock {\em Hybrid switching diffusions}.
\newblock Springer, New York, 2010.

\end{thebibliography}

\end{document}